   \let\digamma\relax                   
\pgfplotsset{compat=1.16}     
\def\paragraph{\vspace{6pt}\@startsection{paragraph}{4}%
  \z@\z@{-\fontdimen2\font}%
  {\normalfont\bfseries}}
\newcounter{todocounter}
\DeclareDocumentCommand\addreference{g}{\stepcounter{todocounter}\todo[color = blue!30]{\thetodocounter. Add reference\IfNoValueF{#1}{: #1}}\xspace}
\DeclareDocumentCommand\checkthis{g}{\stepcounter{todocounter}\todo[color = red!50]{\thetodocounter. Check this\IfNoValueF{#1}{: #1}}\xspace}
\DeclareDocumentCommand\fixthis{g}{\stepcounter{todocounter}\todo[color = orange!50]{\thetodocounter. Fix this\IfNoValueF{#1}{: #1}}\xspace}
\DeclareDocumentCommand\expand{g}{\stepcounter{todocounter}\todo[color = green!50]{\thetodocounter. Expand\IfNoValueF{#1}{: #1}}\xspace}
\tikzset{every loop/.style={min distance=1cm, looseness=30}}
\tikzset{vertex/.style={circle, draw, black, minimum size=6pt, inner sep=0pt}}
\tikzset{half-vertex/.style={semicircle, draw, fill, black, minimum size=3pt, inner sep=0pt, yshift=1.5pt}}
\tikzset{middlearrow/.style={decoration={markings, mark= at position 0.5 with {\arrow{#1}},}, postaction={decorate}}}
\declaretheoremstyle[
  spaceabove = 3pt,
  spacebelow = 3pt,
  bodyfont = \itshape,
]{first}
\declaretheorem[numberwithin=section, style=first]{theorem}
\declaretheorem[sibling=theorem, style=first]{conjecture}
\declaretheorem[sibling=theorem, style=first]{corollary}
\declaretheorem[sibling=theorem, style=first]{lemma}
\declaretheorem[sibling=theorem, style=first]{proposition}
\declaretheorem[numberwithin=section, style=first, title=Theorem]{alphatheorem}
\declaretheorem[sibling=alphatheorem, style=first, title=Conjecture]{alphaconjecture}
\declaretheoremstyle[
  spaceabove = 0pt,
  spacebelow = 0pt,
]{second}
\theoremstyle{second}
\declaretheorem[style=second, sibling=theorem]{construction}
\declaretheorem[style=second, sibling=theorem]{definition}
\declaretheorem[style=second, sibling=theorem]{example}
\declaretheorem[style=second, sibling=theorem]{remark}
\crefname{conjecture}{Conjecture}{Conjectures}
\crefname{construction}{Construction}{Constructions}
\crefname{alphatheorem}{Theorem}{Theorems}
\crefname{alphaconjecture}{Conjecture}{Conjectures}
\crefname{alphacorollary}{Corollary}{Corollaries}
\crefname{alphaproposition}{Proposition}{Propositions}
\crefname{figure}{Figure}{Figures}
\def\gitfootnote{\gdef\@thefnmark{}\@footnotetext}
\DeclareRobustCommand\widecheck[1]{{\mathpalette\@widecheck{#1}}}
\def\@widecheck#1#2{%
    \setbox\z@\hbox{\m@th$#1#2$}%
    \setbox\tw@\hbox{\m@th$#1%
       \widehat{%
          \vrule\@width\z@\@height\ht\z@
          \vrule\@height\z@\@width\wd\z@}$}%
    \dp\tw@-\ht\z@
    \@tempdima\ht\z@ \advance\@tempdima2\ht\tw@ \divide\@tempdima\thr@@
    \setbox\tw@\hbox{%
       \raise\@tempdima\hbox{\scalebox{1}[-1]{\lower\@tempdima\box
\tw@}}}%
    {\ooalign{\box\tw@ \cr \box\z@}}}
\newcommand{\acro}[3]{%
  \newacronym{#1}{#2}{#3}%
  \@namedef{#1}{{\acrshort{#1}}\xspace}%
  \@namedef{#2}{{\acrlong{#1}}\xspace}%
}
\newcommand   \graph {\gamma}          
\newcommand\ii{\ensuremath{\sqrt{-1}}}
\DeclareMathOperator\SU{SU}                   
\newcommand\Gm{{\ensuremath{\bG_{\mathrm{m}}}}} 
\newcommand\LL{\bL}        
\DeclareMathOperator \Gr     {Gr}               
\DeclareMathOperator \Jac    {Jac}              
\DeclareMathOperator \Pic    {Pic}              
\DeclareMathOperator \moduli {M}                
\newcommand \odd  {\ensuremath{\moduli_C(2,\cL)}}   
\DeclareMathOperator \Fuk {Fuk}                 
\DeclareMathOperator \FS  {FS}                    
\DeclareMathOperator \MF  {MF}                    
\DeclareMathOperator\derived{\fD}               
\newcommand\bounded{\ensuremath{\mathrm{b}}}    
\newcommand\dbcoh[1]{\derived^\bounded(#1)}
\mathchardef\mhyphen="2D
\newcommand\dash{\nobreakdash-\hspace{0pt}}
\newcommand\pt{\ensuremath{\mathrm{pt}}}
\newcommand \Chow  {\ensuremath{\mathrm{Chow}}}   
\newcommand \dgCat {\ensuremath{\mathrm{dgCat}}}  
\newcommand \Var   {\ensuremath{\mathrm{Var}}}    
\newcommand \dd    {\ensuremath{\mathrm{d}}}      
\DeclareMathOperator\Bl{Bl}              
\DeclareMathOperator\cc{c}               
\DeclareMathOperator\HH{H}
\DeclareMathOperator\HHHH{HH}
\DeclareMathOperator\Hom{Hom}            
\DeclareMathOperator\Kzero{K_0}          
\DeclareMathOperator\QH{QH}              
\DeclareMathOperator\RRHHom{\fR{\cH}om}
\DeclareMathOperator\Sym{Sym}            
\DeclareMathOperator\ZZKap{Z_{Kap}}      
\newcommand \bA {{\ensuremath{\mathbb{A}}}}
\newcommand \bC {{\ensuremath{\mathbb{C}}}}
\newcommand \bF {{\ensuremath{\mathbb{F}}}}
\newcommand \bG {{\ensuremath{\mathbb{G}}}}
\newcommand \bL {{\ensuremath{\mathbb{L}}}}
\newcommand \bP {{\ensuremath{\mathbb{P}}}}
\newcommand \bQ {{\ensuremath{\mathbb{Q}}}}
\newcommand \bZ {{\ensuremath{\mathbb{Z}}}}
\newcommand \fD {{\ensuremath{\mathbf{D}}}}
\newcommand \fR {{\ensuremath{\mathbf{R}}}}
\newcommand \cA {\ensuremath{\mathcal{A}}}
\newcommand \cB {\ensuremath{\mathcal{B}}}
\newcommand \cC {\ensuremath{\mathcal{C}}}
\newcommand \cE {\ensuremath{\mathcal{E}}}
\newcommand \cH {\ensuremath{\mathcal{H}}}
\newcommand \cL {\ensuremath{\mathcal{L}}}
\newcommand \cM {\ensuremath{\mathcal{M}}}
\newcommand \cO {\ensuremath{\mathcal{O}}}
\newcommand \cP {\ensuremath{\mathcal{P}}}
\newcommand \cR {\ensuremath{\mathcal{R}}}
\newcommand \cW {\ensuremath{\mathcal{W}}}
\title{Decompositions of moduli spaces of vector bundles and graph potentials}
\author{Pieter Belmans}
\address{Department of Mathematics, Universit\'e de Luxembourg, 6, avenue de la Fonte, L-4364 Esch-sur-Alzette, Luxembourg}
\author{Sergey Galkin}
\address{
  PUC-Rio, Departamento de Matem\'atica, Rua Marqu\^es de S\~ao Vicente 225, G\'avea, Rio de Janeiro \\
  (on leave from HSE University, Russian Federation)
}
\author{Swarnava Mukhopadhyay}
\address{School of Mathematics, Tata Institute of Fundamental Research, 1 Homi Bhabha Road, Navy Nagar, Colaba, Mumbai 400005}
\email{pieter.belmans@uni.lu,arxiv-gp-decomp@galkin.org.ru,swarnava@math.tifr.res.in}
\begin{document}

\begin{abstract}
  We propose a conjectural semiorthogonal decomposition for the derived category of
  the moduli space of stable rank 2 bundles with fixed determinant of odd degree,
  independently formulated by Narasimhan.
  We discuss some evidence for,
  and furthermore propose semiorthogonal decompositions with additional structure.

  We also discuss two other decompositions.
  One is a decomposition of this moduli space in the Grothendieck ring of varieties,
  which relates to various known motivic decompositions.
  The other is the critical value decomposition of a candidate mirror Landau--Ginzburg model given by graph potentials,
  which in turn is related under mirror symmetry to Mu\~noz's decomposition of quantum cohomology.
  This corresponds to an orthogonal decomposition of the Fukaya category.
  We will explain how these decompositions can be seen as evidence for the conjectural semiorthogonal decomposition.
\end{abstract}

\maketitle

\tableofcontents

\section{Introduction}
The moduli space~$\odd$ of stable rank 2 vector bundles
with fixed determinant of odd degree on a curve~$C$ of genus~$g\geq 2$ is an important and well-studied object,
its history going back to the 1960s \cite{MR237500,MR0234958,MR0184252}.
It is a smooth projective Fano variety of dimension~$3g-3$ when considered as an algebro-geometric object,
and a monotone symplectic manifold of dimension~$6g-6$ when considered as a symplecto-geometric object.
They also arise as moduli spaces of flat unitary~$\SU(2)$-connections on Riemann surfaces,
or character varieties of conjugacy classes.

Associated to~$\odd$ one can study its invariants,
such as its Betti numbers \cite{MR364254},
or Chow motive \cite{MR1817504}.
The idea behind the description of the Chow motive
is to find a \emph{decomposition}
which expresses it in terms of easier objects,
that still possess a connection to the curve~$C$.

In this article we will discuss 3 types of decompositions,
which have (conjectural) relations to each other:
\begin{itemize}
  \item In \cref{conjecture:ur-sod} we propose a \emph{semiorthogonal} decomposition of the derived category~$\dbcoh{\odd}$.
  \item In \cref{theorem:critical-loci} we describe the critical values and critical loci of the graph potentials from \cite{gp-tqft,gp-sympl}:
    this describes \emph{orthogonal} decompositions of the Fukaya category resp.~the category of matrix factorisations.

    When seen as candidate Landau--Ginzburg mirrors to~$\odd$,
    this description is consistent with
    the eigenvalue decomposition of the quantum multiplication~$\mathrm{c}_1(\odd)*_0-$
    on quantum cohomology~$\QH^\bullet(\odd)$ following Mu\~noz as described in \cref{proposition:munoz}.
  \item In \cref{theorem:kzero} we describe an identity in~$\Kzero(\Var/k)$,
    the Grothendieck ring of varieties,
    which relates to various known and expected \emph{motivic} decompositions.
\end{itemize}
The decompositions discussed in this article for~$\moduli_C(2,\mathcal{L})$
can serve as a blueprint for other decompositions of moduli spaces of sheaves,
and we will discuss related work in \cref{subsection:sod,subsection:kzero-and-measures}.

\paragraph{Semiorthogonal decompositions of the derived category}
%
Semiorthogonal decompositions encode many important geometric properties of varieties (see Kuznetsov's ICM address \cite{MR3728631} for an introduction),
and nowadays play an important role in the geometry and construction of moduli spaces of sheaves.
In this light we propose the following conjecture.
\begin{alphaconjecture}
\label{conjecture:ur-sod}
Let~$C$ be a smooth projective curve of genus~$g\geq 2$.
Then there exists a semiorthogonal decomposition
\begin{equation}
\label{equation:ur-sod}
\begin{aligned}
&\dbcoh{\odd}
=
\big\langle
\dbcoh{\pt},
\dbcoh{C},
\dbcoh{\Sym^2 C},
\ldots \\
& \qquad\ldots,
\dbcoh{\Sym^{g-2} C},
\dbcoh{\Sym^{g-1} C},
\dbcoh{\Sym^{g-2} C},
\ldots \\
& \qquad\ldots,
\dbcoh{\Sym^2 C},
\dbcoh{C},
\dbcoh{\pt}
\big\rangle.
\end{aligned}
\end{equation}
\end{alphaconjecture}
This was conjectured independently by Narasimhan
(as communicated in \cite{1806.11101})
and the authors
(see \cite[Conjecture~7]{oberwolfach}). We also give a further refinements of this conjecture
in \cref{subsection:lefschetz-decomposition,subsection:ringel-samokhin-decomposition}:
we suggest that there exists a
\begin{itemize}
  \item a minimal Lefschetz decomposition in the sense of \cite{MR2354207} with respect to the generator of the Picard group~$\mathcal{O}_{\odd}(1)$;
  \item a Ringel--Samokhin-type decomposition (which we will introduce), which involves a symmetry under an anti-involution.
\end{itemize}
We will explain the state-of-the-art for these in \cref{subsection:sod}.

\paragraph{Eigenvalue and critical value decompositions}
Under mirror symmetry~$\odd$ corresponds to a Landau--Ginzburg model~$f\colon Y\to\mathbb{A}^1$
where~$Y$ is a quasiprojective variety and~$f$ a regular function on it.
In the introduction to \cref{section:qh-gp} we recall the motivation to describe on one hand
the eigenvalues of the quantum multiplication~$\mathrm{c}_1(\odd)*_0-$
and on the other hand the critical values of~$f$.

In \cite{gp-tqft} we have introduced \emph{graph potentials},
and in \cite{gp-sympl} we explained how they can be seen as the first step in the construction of the mirror Landau--Ginzburg model.
The following theorem summarises the situation, and gives further evidence for \cref{conjecture:ur-sod}
and the claim that graph potentials are (partial) mirror Landau--Ginzburg mirrors to~$\odd$.
\begin{alphatheorem}
  \label{theorem:critical-loci}
  The critical values of graph potential for the necklace graph in \cref{figure:necklace-graph}
  are given by
  \begin{equation}
    \label{equation:critical-values}
    8(1-g),8(2-g)\ii,8(3-g),\ldots,0,\ldots,8(g-3),8(g-2)\ii,8(g-1).
  \end{equation}
  The dimension of the critical loci with modulus~$8g-8-8k$ is equal to~$k$, where~$k=0,\ldots,g-1$.
  For trivalent graphs with a perfect matching the critical values form a subset of \eqref{equation:critical-values}.
\end{alphatheorem}
The important conclusions are that graph potentials already see all the (expected) critical values,
even if they are only restrictions of the full Landau--Ginzburg mirror to certain torus charts,
and that the critical loci are of the expected dimension.

This result mirrors
the eigenvalue decomposition of quantum cohomology~$\QH^\bullet(\odd)$ described in \cref{proposition:munoz},
as obtained by Mu\~noz.
It gives further evidence to the claim that graph potentials can be used as
building blocks for the full Landau--Ginzburg mirror of~$\odd$.

\paragraph{The Grothendieck ring of varieties}
The third type of decomposition we consider is an identity for $\odd$ in the Grothendieck ring of varieties. This ring encodes the cut-and-paste relation, and because many invariants of varieties satisfy such a cut-and-paste relation it can be seen as the universal invariant encoding this. As such it implies various known identities for motivic invariants of $\odd$, and as we explain in \cref{corollary:kzero-categories} it is consistent with \cref{conjecture:ur-sod}. The result is the following.

\begin{alphatheorem}
  \label{theorem:kzero}
  We have the equality
  \begin{equation}
    \label{equation:class-in-K_0}
    [\odd] = \LL^{g-1} [\Sym^{g-1}C] + \sum_{i=0}^{g-2}(\LL^i+\LL^{3g-3-2i})[\Sym^iC] + T
  \end{equation}
  in~$\Kzero(\Var/k)$, for some class~$T$ such that~$(1+\LL) \cdot T = 0$.
\end{alphatheorem}
We expect that~$T=0$, but our method of proof is not strong enough to remove this error term.

%
%

\paragraph{Acknowledgements}
We want to thank
Emanuele Macr\`i,
M.S. Narasimhan,
and Maxim Smirnov
for interesting discussions.

This collaboration started in Bonn in January--March 2018 during
the second author's visit to the
``Periods in Number Theory, Algebraic Geometry and Physics'' Trimester Program
of the Hausdorff Center for Mathematics (HIM)
and the first and third author's stay in
the Max Planck Institute for Mathematics (MPIM),
and the remaining work was done in
the Tata Institute for Fundamental Research (TIFR)
during the second author's visit in December 2019--March 2020
and the first author's visit in February 2020.
We would like to thank
HIM, MPIM and TIFR
for the very pleasant working conditions.

The first author was partially supported by the FWO (Research Foundation---Flanders).
The third author was partially supported by
the Department of Atomic Energy, India, under project no. 12-R\&D-TFR-5.01-0500
and also by the Science and Engineering Research Board, India (SRG/2019/000513).

\section{Semiorthogonal decompositions}
\label{section:sod}
In this section we will discuss the structure
of the bounded derived category of coherent sheaves~$\dbcoh{\odd}$\index{D@$\dbcoh{X}$}.
We also suggest two semiorthogonal decompositions with additional structure,
namely a Lefschetz decomposition
(as introduced by Kuznetsov \cite{MR2354207}) 
and we also introduce the notion of
a Ringel--Samokhin decomposition,
related to the structure of quasi-hereditary algebras
and an informal analogy between moduli spaces of bundles
with flag varieties.

\subsection{On the conjecture}
\label{subsection:sod}

In \cref{conjecture:ur-sod} we have formulated a conjecture
giving a semiorthogonal decomposition
into geometrically meaningful pieces,
namely copies of~$\dbcoh{\Sym^i C}$ for~$i=0,\ldots,g-1$.
This was conjectured independently by Narasimhan
(as communicated in \cite{1806.11101})
and the authors (see \cite[Conjecture~7]{oberwolfach}).
The conjecture does not specify the embedding functors.

Moreover, it was a folklore conjecture
that each of the pieces~$\dbcoh{\Sym^iC}$ for~$i=1,\ldots,g-1$ is indecomposable,
i.e.~does not admit further semiorthogonal decompositions.
This was proven for~$i\leq\lfloor\frac{g+3}{2}\rfloor-1$
in \cite[Corollary~D]{2007.00994},
with a weaker result being given in \cite[Theorem~1.3]{MR4276298},
and finally for all~$i\leq g-1$ in \cite[Theorem~1.4]{2107.09564v1}.




\paragraph{State of the art}
The first steps
(before the precise conjecture was phrased)
were taken
by Narasimhan \cite{MR3713871,MR3880395},
and independently
by Fonarev--Kuznetsov \cite{MR3764066} (for $C$ generic),
who have shown that
the Fourier--Mukai functor~$\Phi_{\cW}$,
where~$\cW$\index{W@$\cW$} is
the universal vector bundle on~$C\times\odd$
is fully faithful.
Together with the exceptional objects~$\cO_{\odd}$ and~$\Theta$
this gives rise to \emph{3 components} in~\eqref{equation:ur-sod}.

The first and third author
have shown in \cite{MR3954042} that it is possible to twist the embedding~$\Phi_{\cW}$ by~$\Theta$
to obtain the embedding of \emph{the four low-dimensional components}
in~\eqref{equation:ur-sod}
(for $g\geq 12$).
The embedding of a copy of~$\derived^\bounded(\Sym^2C)$
was recently obtained by Narasimhan--Lee in \cite[Theorem~1.2]{2106.04872v1}
(for~$C$ not hyperelliptic and~$g\geq 16$).

Most importantly,
in \cite{2108.11951v3} and \cite{2108.13353v2}
a semiorthogonal decomposition that contains \emph{all} expected components
was obtained by Tevelev--Torres and Xu--Yau.
The essential ingredient in the first approach
is Thaddeus' wall-crossing picture, which is also used in \cref{section:kzero} to prove \cref{theorem:kzero}.
The second approach uses Teleman's Borel--Weil--Bott theory on the moduli stack of $\mathrm{SL}_2$\dash bundles.
What remains to be proven is that the complement to the subcategory generated by these components is trivial.

\paragraph{Outlook}
This conjecture forms a part of a greater program,
which aims at finding
natural semiorthogonal decompositions
of the bounded derived categories of coherent sheaves
on the moduli spaces of (stable) objects
in the bounded derived categories of coherent sheaves
on algebraic varieties.

For moduli of higher rank vector bundles there exist various motivic decompositions (see also \cref{subsection:kzero-and-measures})
and these suggest semiorthogonal decompositions of~$\dbcoh{\moduli_C(r,\mathcal{L})}$ with~$\gcd(r,\deg\mathcal{L})=1$.
In this case~$\moduli_C(r,\mathcal{L})$ is a smooth projective Fano variety of dimension~$(r^2-1)(g-1)$ and index~$2$.
The motivic decompositions suggest that derived categories of products of~$\Sym^iC$ will play a role,
and for~$r=3$ this was made precise in \cite[Conjecture 1.9]{2007.06067v1}.

Related to this is the indecomposability of~$\derived^\bounded(\Sym^iC)$ for all~$i\leq g-1$
which is proven in \cite[Theorem~1.4]{2107.09564v1}.
The semiorthogonal decomposition of the derived categories of~$\Sym^iC$ for~$i\geq g$
was obtained by Toda in \cite{MR4244515},
and studied again in \cite[\S1.4]{1811.12525} and \cite[Theorem~D]{1909.04321}.

Another important class of moduli spaces of coherent sheaves
are punctual Hilbert schemes.
The study of (partial) semiorthogonal decompositions was started in \cite{MR3397451}.
It was generalized to higher dimensions in \cite{MR3811590,MR3950704}.
Further generalisations to nested Hilbert schemes were obtained
in \cite[\S3.1.4]{1811.12525} and \cite[Theorem~E]{1909.04321}.
In a different direction, a semiorthogonal decomposition for
the Hilbert square of a cubic hypersurface
was obtained in \cite{2002.04940}
using the Fano variety of lines.

\subsection{Lefschetz decompositions}
\label{subsection:lefschetz-decomposition}
A more structured form of \cref{conjecture:ur-sod}
is given by that of a Lefschetz decomposition,
a notion introduced by Kuznetsov \cite{MR2354207} 
in his theory of homological projective duality.

\begin{definition}
Let~$X$ be a smooth projective variety,
and~$\cO_X(1)$ be a line bundle on~$X$.
A \emph{(right) Lefschetz decomposition} of~$\dbcoh{X}$
 with respect to the line bundle~$\cO_X(1)$
is a semiorthogonal decomposition
\begin{equation}
\label{equation:lefschetz-decomposition}
\dbcoh{X}
=
\langle
\cA_0,
\cA_1 \otimes \cO_X(1) ,
\ldots,
\cA_{n-1} \otimes \cO_X(n-1)
\rangle,
\end{equation}
such that
\begin{equation}
\cA_{n-1} \subseteq \cA_{n-2} \subseteq \ldots \subseteq \cA_0 .
\end{equation}
We say that~$n$ is the \emph{length} of the decomposition,
and~$\cA_0$ is the \emph{starting block}.

The decomposition~\eqref{equation:lefschetz-decomposition}
is \emph{rectangular} if~\( \cA_{n-1} = \ldots = \cA_0 \).
\end{definition}

By \cite[Lemma~2.18(i)]{MR2403307}
a Lefschetz decomposition is completely determined by its starting block,
as one can inductively define
\( \cA_i := {}^\perp(\cA_0\otimes\cO_X(-i)) \cap \cA_{i-1} \).
This allows us to define the notion of
a \emph{minimal} Lefschetz collection,
by considering the (partial) inclusion order
on the starting blocks for Lefschetz collections
of~$\dbcoh{X}$ with respect to~$\cO_X(1)$.

In \cite[Definition~1.3(iii)]{MR4017162}
the notion of \emph{residual category}
for a Lefschetz exceptional collection is introduced,
to measure how far it is from being rectangular.
One can generalize this to an arbitrary Lefschetz decomposition as in \cite[Definition~2.6]{MR4017162}.
\begin{definition}
  Let~$\dbcoh{X}=\langle\cA_0,\cA_1\otimes\cO_X(1),\ldots,\cA_{n-1}\otimes\cO_X(n-1)\rangle$ be a Lefschetz decomposition of length~$n$ with respect to the line bundle~$\cO_X(1)$. The \emph{residual category}~$\cR$\index{R@$\cR$} of this decomposition is defined as
  \begin{equation}
    \cR:=\langle\cA_{n-1},\cA_{n-1}\otimes\cO_X(1),\ldots,\cA_{n-1}\otimes\cO_X(n-1)\rangle^\perp;
  \end{equation}
\end{definition}

\begin{remark}
  \label{remark:kuznetsov-smirnov-for-odd-cohomology}
  As discussed in \cite{MR4264080} (in the context of Fano varieties with vanishing odd cohomology), properties of the residual category are conjecturally related to the structure of fiber over zero for the quantum cohomology of~$X$. For Fano varieties with odd cohomology, one aspect of a generalization of this conjecture is the prediction that the dimension of the Hochschild homology of the residual category is equal to the length of the eigenspace associated to the eigenvalue~0. By \cref{proposition:munoz} this eigenspace is isomorphic to~$\HH^\bullet(\Sym^{g-1}C)$ (of dimension~$(2g+1)!/g!^2$), which by the Hochschild--Kostant--Rosenberg decomposition is isomorphic to~$\HHHH_\bullet(\Sym^{g-1}C)$, hence the equality holds.
\end{remark}

This brings us to the following more precise form of \cref{conjecture:ur-sod}, incorporating a Lefschetz structure.

\begin{conjecture}
  \label{conjecture:lefschetz-sod}
  Let~$C$ be a smooth projective curve of genus~$g$. Then there exists a minimal Lefschetz decomposition of~$\dbcoh{\odd}$ of length~2 with respect to~$\Theta$, whose first block has a semiorthogonal decomposition
  \begin{equation}
    \cA_0
    =
    \langle
      \dbcoh{\pt},
      \dbcoh{C},
      \dbcoh{\Sym^2C}
      \ldots,
      \dbcoh{\Sym^{g-1}C}
    \rangle,
  \end{equation}
  so in particular the residual category is~$\dbcoh{\Sym^{g-1}C}$.
\end{conjecture}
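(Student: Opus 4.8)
The task is to produce a collection of fully faithful functors $\iota_i\colon\dbcoh{\Sym^iC}\hookrightarrow\dbcoh{\odd}$, $i=0,\ldots,g-1$, whose images form an admissible sequence $\cA_0=\langle\dbcoh{\pt},\dbcoh{C},\ldots,\dbcoh{\Sym^{g-1}C}\rangle$ such that, setting $\cA_1=\langle\dbcoh{\pt},\dbcoh{C},\ldots,\dbcoh{\Sym^{g-2}C}\rangle\subseteq\cA_0$, the pair fits into a semiorthogonal decomposition $\dbcoh{\odd}=\langle\cA_0,\cA_1\otimes\Theta\rangle$. The inclusion $\cA_1\subseteq\cA_0$ is then automatic; that $\cA_1$ is \emph{the} Lefschetz second block follows from the inductive formula $\cA_1={}^\perp(\cA_0\otimes\Theta^{-1})\cap\cA_0$ of \cite[Lemma~2.18(i)]{MR2403307}, once one checks that $\dbcoh{\Sym^{g-1}C}\not\subseteq{}^\perp(\cA_0\otimes\Theta^{-1})$; and the residual category $\cR=\langle\cA_1,\cA_1\otimes\Theta\rangle^\perp$ comes out as $\dbcoh{\Sym^{g-1}C}$, matching \cref{remark:kuznetsov-smirnov-for-odd-cohomology}. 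In this way the conjecture is a sharpening of \cref{conjecture:ur-sod} in which the second copies of $\dbcoh{\Sym^iC}$ for $i\le g-2$ are pinned down as the $\Theta$-twists of the first copies.

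The plan is to obtain such a decomposition by one of two routes. One is to start from the semiorthogonal decomposition containing all expected components, due to Tevelev--Torres and Xu--Yau, and to reorganise it --- mutating and re-twisting the ``descending'' components past the top block $\dbcoh{\Sym^{g-1}C}$ --- into Lefschetz shape. The structural input is Serre duality on $\odd$ together with the index-two identity $\omega_{\odd}^{-1}\cong\Theta^{\otimes2}$: the semiorthogonality $\Hom^\bullet(\cA_1\otimes\Theta,\cA_0)=0$ becomes equivalent to the vanishing $\Hom^\bullet(\cA_0,\cA_1\otimes\Theta^{-1})=0$, of exactly the type that makes $\cA_0$ eligible as a Lefschetz starting block, and which should be extractable from the Ext-vanishing between $\cO_{\odd}$, $\Theta$, the universal bundle $\cW$ on $C\times\odd$, and the kernels realising the higher-symmetric-power embeddings. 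The second, more hands-on route is to build the decomposition inductively along the Thaddeus wall-crossing --- the strategy of Tevelev--Torres, and the one used in \cref{section:kzero} for \cref{theorem:kzero} --- applying the derived-category decomposition at each flip while bookkeeping the relative $\cO(1)$'s, so that the copies of $\dbcoh{\Sym^iC}$ that appear past the wall are already tensored by $\Theta$. On either route, once $\langle\cA_0,\cA_1\otimes\Theta\rangle$ has been recognised as a semiorthogonal subcategory assembled from $2g-1$ geometric blocks, its Hochschild homology agrees with that of $\dbcoh{\odd}$ by \cref{theorem:kzero} (the error term $T$ being invisible to Hodge numbers), so that fullness reduces to exactly the input still missing from \cref{conjecture:ur-sod}: that a semiorthogonal complement with vanishing Hochschild homology is itself zero.

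Minimality is addressed by two further observations. Since $\odd$ is Fano of index two, Serre duality forces every Lefschetz decomposition with respect to $\Theta$ to have length at most $2$: a hypothetical third block $\cA_2\otimes\Theta^{\otimes2}$ would satisfy $\Hom^\bullet(\cA_2\otimes\Theta^{\otimes2},\cA_0)\cong\Hom^\bullet(\cA_0,\cA_2)^{\vee}$, nonzero as soon as $\cA_2\neq0$ since $\cA_2\subseteq\cA_0$. And the rectangular (length-one, up to twist) case is excluded because the eigenvalue $0$ of $\mathrm{c}_1(\odd)*_0-$ occurs by \cref{proposition:munoz}, so the residual category is nonzero. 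Among the remaining length-two decompositions, minimality of the starting block $\cA_0$ should then follow by combining the Hochschild-homology count with the indecomposability of $\dbcoh{\Sym^{g-1}C}$ of \cite[Theorem~1.4]{2107.09564v1}: one cannot shrink $\cA_0$ by absorbing part of the residual block into $\cA_1$ without either breaking indecomposability of $\dbcoh{\Sym^{g-1}C}$ or producing a rectangular decomposition, which has just been ruled out. The main obstacle --- and the genuinely conditional step --- is the construction itself: beyond presupposing the still-open fullness half of \cref{conjecture:ur-sod}, it requires a description of the embedding functors canonical enough to control their behaviour under $-\otimes\Theta$. Absent a conceptual reason forcing this --- an enhancement of the wall-crossing flips compatible with $-\otimes\Theta$, or a homological-projective-duality statement in which the Lefschetz structure is manifest --- the identification of the second-copy embeddings with the $\Theta$-twists of the first would have to be verified essentially by hand, and that is where the real difficulty lies.
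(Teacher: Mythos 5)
The statement you are addressing is \cref{conjecture:lefschetz-sod}, which the paper states as a \emph{conjecture} and does not prove: the only case established there is $g=2$, obtained in \cref{proposition:bondal-orlov-lefschetz-decomposition} by mutating the Bondal--Orlov decomposition of \cite[Theorem~2.9]{alg-geom/9506012}. So there is no proof in the paper to compare against, and your text is, as you yourself concede at the end, a conditional strategy rather than a proof. The parts that are actually correct are modest but real: the Serre-duality argument that the index-two relation $\omega_{\odd}^\vee\cong\Theta^{\otimes2}$ bounds the length of any Lefschetz decomposition with respect to $\Theta$ by $2$ is sound, and you correctly use \cite[Lemma~2.18(i)]{MR2403307} to reduce the Lefschetz structure to a property of the starting block.

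The gaps are genuine and in each case coincide with open problems. First, fullness: your proposed reduction --- that a semiorthogonal complement with vanishing Hochschild homology is zero --- is not a theorem; (quasi-)phantom categories exist on smooth projective varieties, so no amount of Hochschild or motivic bookkeeping (\cref{theorem:kzero} included) closes the complement of the Tevelev--Torres/Xu--Yau subcategory. Second, the identification of the ``descending'' copies of $\dbcoh{\Sym^iC}$ with the $\Theta$-twists of the ``ascending'' ones is exactly the content that distinguishes \cref{conjecture:lefschetz-sod} from \cref{conjecture:ur-sod}, and you offer no mechanism for it beyond the hope that the wall-crossing kernels cooperate; this is not a bookkeeping issue but the heart of the matter. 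Third, your exclusion of the rectangular case and your identification of the residual category rest on the relation between residual categories and the eigenvalue $0$ of $\mathrm{c}_1(\odd)*_0-$, which \cref{remark:kuznetsov-smirnov-for-odd-cohomology} is explicit is itself conjectural, so this step is circular as a proof. Finally, minimality of the starting block is a statement about \emph{all} admissible subcategories that could serve as starting blocks, not only those obtained by shrinking your particular $\cA_0$; the indecomposability of $\dbcoh{\Sym^{g-1}C}$ does not preclude a differently organised, strictly smaller starting block, so that step too remains unproved.
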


For~$g=2$ this conjecture is known by considering the semiorthogonal decomposition \cite[Theorem~2.9]{alg-geom/9506012} and mutating the object~$\cO_{\odd}(-1)$ to the right.
\begin{proposition}[Bondal--Orlov]
  \label{proposition:bondal-orlov-lefschetz-decomposition}
  Let~$C$ be a curve of genus~2. Then
  \begin{equation}
    \dbcoh{\odd}
    =
    \langle
      \cO_{\odd},
      \Phi_{\cW}(\dbcoh{C}),
      \Theta
    \rangle.
  \end{equation}
  is a minimal Lefschetz decomposition, whose residual category is equivalent to~$\dbcoh{C}$.
\end{proposition}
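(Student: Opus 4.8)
The plan is to obtain the stated decomposition from the Bondal--Orlov semiorthogonal decomposition by one mutation, then read off the Lefschetz structure, identify the residual category, and finally argue minimality. Throughout I use that, for $g=2$, by classical results of Newstead and Narasimhan--Ramanan the threefold $\odd$ is the smooth complete intersection of two quadrics in $\bP^5$, so it is Fano of index $2$ with $\Pic\odd=\bZ\Theta$, $\Theta=\cO_{\odd}(1)$, $\omega_{\odd}\cong\cO_{\odd}(-2)$, and Serre functor $\serre_{\odd}=(-)\otimes\cO_{\odd}(-2)[3]$. The input is \cite[Theorem~2.9]{alg-geom/9506012}: $\dbcoh{\odd}$ admits a semiorthogonal decomposition with three components, the two exceptional line bundles $\cO_{\odd}(-1)$ and $\cO_{\odd}$, and a copy of $\dbcoh{C}$ embedded by the fully faithful Fourier--Mukai functor $\Phi_{\cW}$ whose kernel is (a twist of) the universal bundle $\cW$ on $C\times\odd$. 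After rotating blocks by the Serre functor if necessary I bring it to the form $\dbcoh{\odd}=\langle\cO_{\odd}(-1),\cO_{\odd},\Phi_{\cW}(\dbcoh{C})\rangle$.

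Now I mutate the leftmost component $\cO_{\odd}(-1)$ past $\cO_{\odd}$ and $\Phi_{\cW}(\dbcoh{C})$ all the way to the right. Moving the first component of a semiorthogonal decomposition to the last slot replaces it by its image under $\serre_{\odd}^{-1}$, here
\[
  \serre_{\odd}^{-1}\bigl(\cO_{\odd}(-1)\bigr)=\cO_{\odd}(-1)\otimes\omega_{\odd}^{-1}[-3]=\cO_{\odd}(1)[-3]=\Theta[-3],
\]
which generates $\langle\Theta\rangle$; since mutations preserve semiorthogonality and fullness this yields $\dbcoh{\odd}=\langle\cO_{\odd},\Phi_{\cW}(\dbcoh{C}),\Theta\rangle$. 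With $\cA_0:=\langle\cO_{\odd},\Phi_{\cW}(\dbcoh{C})\rangle$ and $\cA_1:=\langle\cO_{\odd}\rangle$ one has $\cA_1\subseteq\cA_0$ and $\cA_1\otimes\Theta=\langle\Theta\rangle$, so this is a Lefschetz decomposition of length $2$ with respect to $\Theta$. To identify the residual category $\cR=\langle\cA_1,\cA_1\otimes\Theta\rangle^{\perp}=\langle\cO_{\odd},\Theta\rangle^{\perp}$ (the pair $\langle\cO_{\odd},\Theta\rangle$ is semiorthogonal because $\HH^{\bullet}(\odd,\cO_{\odd}(-1))=0$ by Kodaira vanishing, as $\cO_{\odd}(-1)=\omega_{\odd}\otimes\Theta$), I pass to the admissible subcategory $\cB:={}^{\perp}\langle\cO_{\odd}\rangle$, which carries a Serre functor $\serre_{\cB}$; the two decompositions induce $\cB=\langle\Theta,\cR\rangle=\langle\Phi_{\cW}(\dbcoh{C}),\Theta\rangle$. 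Rotating $\Theta$ from the left to the right in the first gives $\cB=\langle\cR,\serre_{\cB}^{-1}(\Theta)\rangle$, and applying $\serre_{\cB}$ gives $\cB=\langle\serre_{\cB}(\cR),\Theta\rangle$; comparing left components of these two decompositions of $\cB$ yields $\serre_{\cB}(\cR)=\Phi_{\cW}(\dbcoh{C})$, hence $\cR\simeq\Phi_{\cW}(\dbcoh{C})\simeq\dbcoh{C}$.

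It remains to prove minimality, which is where the real content lies. First I bound the length: in a Lefschetz decomposition $\langle\cA_0',\cA_1'\otimes\Theta,\dots,\cA_{n-1}'\otimes\Theta^{\,n-1}\rangle$ with $n\ge3$, pick a nonzero $E\in\cA_{n-1}'\subseteq\cA_0'\cap\cA_2'$; then $E$ lies in the first block and $E\otimes\Theta^{2}$ in the third, so $\Hom^{\bullet}(E\otimes\Theta^{2},E)=0$, whereas $\Hom^{\bullet}(E\otimes\Theta^{2},E)=\Hom^{\bullet}(E,E\otimes\omega_{\odd})\cong\Hom^{3-\bullet}(E,E)^{\vee}\ne0$ by Serre duality on the index‑$2$ threefold $\odd$ (here the identity $\Theta^{-2}=\omega_{\odd}$ is used), a contradiction; hence $n\le2$ (in particular there is no full exceptional collection, consistently with $\HHHH_{\pm1}(\dbcoh{\odd})\ne0$). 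Next, by the definition of the residual category a Lefschetz decomposition with residual category $\cR'$ gives $\dbcoh{\odd}=\langle\cA_{n-1}',\cA_{n-1}'\otimes\Theta,\dots,\cA_{n-1}'\otimes\Theta^{\,n-1},\cR'\rangle$, so additivity of Hochschild homology gives $\dim\HHHH_{\bullet}(\dbcoh{\odd})=n\dim\HHHH_{\bullet}(\cA_{n-1}')+\dim\HHHH_{\bullet}(\cR')$; since $\dim\HHHH_{\bullet}(\dbcoh{\odd})=8$ and the residual category of a minimal Lefschetz decomposition of $\dbcoh{\odd}$ with respect to $\Theta$ has the Hochschild homology of $\Sym^{g-1}C=C$, of total dimension $6$ (cf.\ \cref{remark:kuznetsov-smirnov-for-odd-cohomology} and \cref{proposition:munoz}), one is forced into $n=2$, $\dim\HHHH_{\bullet}(\cA_{n-1}')=1$, and $\dim\HHHH_{\bullet}$ of the minimal starting block $=7$; as $\cA_0=\langle\cO_{\odd},\Phi_{\cW}(\dbcoh{C})\rangle$ already has $\dim\HHHH_{\bullet}=1+6=7$, no Lefschetz starting block can be properly contained in $\cA_0$, so $\cA_0$ is minimal. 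The hard part is exactly this last point: ruling out a smaller starting block — equivalently, a hypothetical rectangular Lefschetz decomposition of length $2$ — is the only step that is not purely formal, and it is where one genuinely needs the input that the residual category is nonzero, of the expected Hochschild dimension.
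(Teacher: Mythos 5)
Your first two paragraphs are correct and follow exactly the route the paper itself indicates: start from the Bondal--Orlov decomposition $\langle\cO_{\odd}(-1),\cO_{\odd},\Phi_{\cW}(\dbcoh{C})\rangle$, mutate $\cO_{\odd}(-1)$ to the far right (where the Serre functor turns it into $\Theta$ since $\omega_{\odd}\cong\cO_{\odd}(-2)$), read off the Lefschetz structure with $\cA_0=\langle\cO_{\odd},\Phi_{\cW}(\dbcoh{C})\rangle$ and $\cA_1=\langle\cO_{\odd}\rangle$, and observe that the residual category is a mutation of $\Phi_{\cW}(\dbcoh{C})$, hence equivalent to $\dbcoh{C}$. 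The length bound $n\leq 2$ via Serre duality and the index being $2$ is also standard and correct.

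The gap is in the minimality step. You invoke the statement that \emph{the residual category of a minimal Lefschetz decomposition has the Hochschild homology of $\Sym^{g-1}C=C$}, citing \cref{remark:kuznetsov-smirnov-for-odd-cohomology}; but that remark records a \emph{conjectural} prediction (a generalization of the Kuznetsov--Smirnov conjecture relating residual categories to the fiber over $0$ of quantum multiplication), so it cannot be used as input to a proof. Moreover, even granting it, the dimension count does not close the argument: a hypothetical starting block $\cB_0\subsetneq\cA_0$ of a length-$2$ Lefschetz decomposition $\langle\cB_0,\cB_1\otimes\Theta\rangle$ only satisfies $\dim\HHHH_\bullet(\cB_0)+\dim\HHHH_\bullet(\cB_1)=8$, and nothing forces $\dim\HHHH_\bullet(\cB_1)=1$ --- for instance the numerics are perfectly consistent with a rectangular decomposition $\langle\cB,\cB\otimes\Theta\rangle$ with $\dim\HHHH_\bullet(\cB)=4$, or with $\cB_1$ generated by two exceptional objects. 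Your final inference ``$\dim\HHHH_\bullet(\cA_0)=7$ equals the required dimension, hence no smaller starting block'' also tacitly assumes that a proper admissible subcategory has strictly smaller Hochschild homology, i.e.\ the absence of phantoms. Ruling out smaller starting blocks genuinely requires structural input (e.g.\ indecomposability of $\dbcoh{C}$ together with an analysis of which admissible subcategories $\cC\subseteq\cA_0$ can satisfy $\cC\otimes\Theta^{-1}\subseteq\cA_0$ and $\Hom(\cC,\cC\otimes\Theta^{-1})=0$), none of which appears in your argument. To be fair, the paper itself offers no proof of minimality either --- it only records the mutation --- but as a self-contained proof your minimality step does not go through as written.
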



\paragraph{Homological projective duality}
After constructing a (minimal) Lefschetz decomposition the next step would be to understand the homological projective dual of~$\odd$ as in \cite{MR2354207}. This would give the structure of the derived categories of linear sections of~$\odd$, and in particular for a smooth hyperplane section. By Theorem~1.1 of op.~cit. we have that \cref{conjecture:lefschetz-sod} gives a semiorthogonal decomposition
\begin{equation}
  \begin{aligned}
    \dbcoh{\odd\cap H}&=\langle\cC_H,\cA_1(1)\rangle \\
    &=\langle\cC_H;\dbcoh{\pt},\dbcoh{C},\ldots,\dbcoh{\Sym^{g-2}C}\rangle
  \end{aligned}
\end{equation}
To indicate the complexity of the homological projective dual, observe that for~$g=2$ the category~$\cC_H$ consists of~7~exceptional objects (because it's the complement to~$\mathcal{O}(1)$ in the derived category of~$\Bl_5\mathbb{P}^2\cong Q\cap Q'\cap H$ in~$\mathbb{P}^5$).

For~$g=3$ one can use the description from \cite{MR0429897} of~$\odd$ for a hyperelliptic curve~$C$ as the zero locus of~$(\Sym^2\mathcal{U}^\vee)^{\oplus 2}$ on~$\Gr(g-1,2g+2)$, together with the Borel--Weil--Bott theorem and the Koszul sequence, to compute that the Hodge diamond of a hyperplane section is given by
\begin{equation}
  \begin{tabular}{cccccccccccc}
      &     &     &     &      & $1$ &      &     &     &     &     \\
      &     &     &     & $0$  &     & $0$  &     &     &     &     \\
      &     &     & $0$ &      & $1$ &      & $0$ &     &     &     \\
      &     & $0$ &     & $3$  &     & $3$  &     & $0$ &     &     \\
      & $0$ &     & $0$ &      & $2$ &      & $0$ &     & $0$ &     \\
  $0$ &     & $1$ &     & $61$ &     & $61$ &     & $1$ &     & $0$ \\
      & $0$ &     & $0$ &      & $2$ &      & $0$ &     & $0$ &     \\
      &     & $0$ &     & $3$  &     & $3$  &     & $0$ &     &     \\
      &     &     & $0$ &      & $1$ &      & $0$ &     &     &     \\
      &     &     &     & $0$  &     & $0$  &     &     &     &     \\
      &     &     &     &      & $1$ &      &     &     &     &     \\
  \end{tabular}
\end{equation}
Using the Hochschild--Kostant--Rosenberg decomposition and additivity of Hochschild homology, we see that
\begin{equation}
  \operatorname{HH}_\bullet(\mathcal{C}_H)=k[-3]\oplus k^{64}[-1]\oplus k^5\oplus k^{64}[1]\oplus k[3].
\end{equation}
There is no obvious modular interpretation for (a semiorthogonal decomposition of)~$\cC_H$, nor does it look like that the homological projective dual has an easy description.

\subsection{Ringel--Samokhin-type decompositions}
\label{subsection:ringel-samokhin-decomposition}
There is another way in which we can impose further conditions on a semiorthogonal decomposition for~$\dbcoh{\odd}$. This condition is inspired by the theory of hereditary algebras and Ringel duality, and by similar semiorthogonal decompositions for~$\dbcoh{G/B}$ obtained by Samokhin, and encodes special symmetries not found in most semiorthogonal decompositions.

Let~$X$ be a smooth projective variety, and consider a semiorthogonal decomposition
\begin{equation}
  \label{equation:ringel-samokhin-A}
  \dbcoh{X}
  =
  \langle\cA_0,\ldots,\cA_n\rangle.
\end{equation}
of length~$n+1\geq 2$.
Let~$\sigma$ be an anti-equivalence of~$\dbcoh{X}$ which is moreover an involution. Let~$\cB_{k}$ be the image of~$\cA_{n-k}$ under this involution, for~$k=0,\ldots,n$. Then we have a semiorthogonal decomposition
\begin{equation}
  \label{equation:ringel-samokhin-B}
  \dbcoh{X}
  =
  \langle\cB_0,\ldots,\cB_n\rangle.
\end{equation}
On the other hand consider the left dual decomposition in the sense of \cite[Definition~3.6]{1410.3742} denoted by
\begin{equation}
  \label{equation:ringel-samokhin-C}
  \dbcoh{X}
  =
  \langle\cC_0,\ldots,\cC_n\rangle.
\end{equation}
This brings us to the following definition.
\begin{definition}
  A decomposition~\eqref{equation:ringel-samokhin-A} is of \emph{Ringel--Samokhin type} if there exists an anti-equivalence~$\sigma$ of~$\dbcoh{X}$ such for all~$k=0,\ldots,n$ the subcategory~$\cC_k$ is the image of~$\cB_k$ under~$\sigma$.
\end{definition}

\begin{remark}
  In \cite{1410.3742} Samokhin studied such exceptional collections for the derived category~$\dbcoh{G/B}$, where~$G/B$ is the full flag variety of a group of rank~2, and the anti-equivalence is given by~$\RRHHom(-,\omega_{G/B}^{1/2})$.
\end{remark}

We can refine \cref{conjecture:ur-sod} into a semiorthogonal decomposition of Ringel--Samokhin-type as follows.
\begin{conjecture}
  \label{conjecture:ringel-samokhin-sod}
  Let~$C$ be a smooth projective curve of genus~$g\geq 2$. Then there exists a Ringel--Samokhin-type decomposition
  \begin{equation}
  \begin{aligned}
    \dbcoh{\odd}
    =
    \langle
      \dbcoh{\pt},
      \dbcoh{C},
      \ldots,
      \dbcoh{\Sym^{g-2}C},
      \dbcoh{\Sym^{g-1}C},
      \dbcoh{\Sym^{g-2}C},
      \ldots, \\
      \ldots,\dbcoh{C},
      \dbcoh{\pt}
    \rangle
    \end{aligned}
  \end{equation}
  with the anti-equivalence given by~$\RRHHom(-,\cO_{\odd}(1))$.
\end{conjecture}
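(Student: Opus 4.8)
Because \cref{conjecture:ringel-samokhin-sod} is a refinement of \cref{conjecture:ur-sod}, which is itself still open — only the vanishing of the complement to the known components of \cite{2108.11951v3,2108.13353v2} remains — what follows is a conditional strategy rather than a proof. The plan is to start from a \emph{rigidified} version of \cref{conjecture:ur-sod}: not just the existence of the decomposition~\eqref{equation:ur-sod}, but a preferred choice of the embedding functors $\dbcoh{\Sym^iC}\hookrightarrow\dbcoh{\odd}$. The guiding observation is that~\eqref{equation:ur-sod} is manifestly palindromic — the two extremal pieces are $\langle\cO_{\odd}\rangle$ and $\langle\Theta\rangle$, and each $\dbcoh{\Sym^iC}$ with $1\le i\le g-2$ occurs twice — so one should define the embeddings of the ``right half'' as the $\sigma$-images of those of the ``left half'', where $\sigma:=\RRHHom(-,\cO_{\odd}(1))\simeq(-)^\vee\otimes\Theta$ is the prescribed anti-involution (note $\sigma^2\simeq\mathrm{id}$), and choose the central embedding of $\dbcoh{\Sym^{g-1}C}$ to be $\sigma$-stable; checking that this prescription really produces a semiorthogonal decomposition, i.e. that the cross-terms between the two halves vanish, is itself part of the rigidification of \cref{conjecture:ur-sod}. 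With such a realisation $\sigma$ interchanges the two copies of each $\dbcoh{\Sym^iC}$ and fixes (a twist of) the middle piece, so $\sigma$ carries~\eqref{equation:ur-sod} to its own reverse, and the Ringel--Samokhin property collapses to the single assertion that the left dual decomposition of~\eqref{equation:ur-sod} in the sense of \cite[Definition~3.6]{1410.3742} is again its reverse.

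That assertion is a statement about the iterated left mutations producing the left dual decomposition: one must show they preserve the palindromic family of subcategories. This is governed by the graded $\Hom$-groups in $\dbcoh{\odd}$ between the embedded symmetric products, hence by the Fourier--Mukai kernels built from the universal bundle $\cW$ on $C\times\odd$ and the theta line bundle, so in practice this step reduces to computing those $\Hom$-groups and verifying that the mutations are as mild as the palindromic structure predicts. An alternative, more algebraic route exploits the analogy with quasi-hereditary algebras invoked above: realise a tilting generator of $\dbcoh{\odd}$ compatible with~\eqref{equation:ur-sod} as a module over a standardly stratified finite-dimensional algebra $A$ whose stratification recovers~\eqref{equation:ur-sod}, so that passage to the left dual decomposition becomes Ringel duality; the conjecture then asserts that $A$ is Ringel self-dual, with the self-duality implemented — through the tilting equivalence — by $\RRHHom(-,\cO_{\odd}(1))$, exactly the pattern Samokhin established for $\dbcoh{G/B}$ of rank $2$ using $\RRHHom(-,\omega_{G/B}^{1/2})$. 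As a base case, $g=2$ can be done by hand: $\odd$ is an intersection of two quadrics in $\bP^5$, \cref{proposition:bondal-orlov-lefschetz-decomposition} gives the collection $\langle\cO_{\odd},\Phi_{\cW}(\dbcoh{C}),\Theta\rangle$, the anti-involution $\sigma$ visibly swaps $\cO_{\odd}\leftrightarrow\Theta$ and preserves a twist of the middle component, and its left dual is obtained by a single round of mutations, which one checks returns the reversed collection.

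The main obstacle is precisely the one that obstructs \cref{conjecture:ur-sod} itself. There is as yet no explicit, canonical family of embedding functors for which the $\Hom$-groups between the copies of $\dbcoh{\Sym^iC}$ are under full control, and without that one cannot evaluate the iterated mutations defining the left dual, let alone test their compatibility with $\sigma$. The crux is to compute $\RRHHom(\Phi_i(-),\Theta)$ — equivalently, to understand how the theta line bundle interacts with the Hecke-type correspondences embedding $\Sym^iC$ into $\odd$ — which is exactly the geometric input the whole program still lacks; the extra requirement that the chosen realisation be $\sigma$-symmetric, so that \cref{conjecture:ringel-samokhin-sod} is a genuine refinement rather than a competing decomposition, only sharpens this difficulty rather than avoiding it.
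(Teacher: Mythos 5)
This statement is a conjecture, and the paper offers no proof beyond the case~$g=2$; your proposal correctly treats it as open and conditional on a rigidified, $\sigma$\nobreakdash-symmetric form of \cref{conjecture:ur-sod}. Your~$g=2$ base case is essentially the paper's argument: the paper packages the check you describe as \cref{prop:BOS} (a length-3 decomposition~$\langle\cL_1,\cA,\cL_2\rangle$ with exceptional line bundles is of Ringel--Samokhin type if and only if~$\cL_1\otimes\cL_2^\vee$ is a theta characteristic), applied to the Bondal--Orlov collection of \cref{proposition:bondal-orlov-lefschetz-decomposition} together with the fact that~$\odd$ has index~2.
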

This conjecture highlights how semiorthogonal decompositions of~$\odd$ are expected to have strong symmetry properties, much stronger than found for most varieties exhibiting semiorthogonal decompositions.

For~$g=2$ we can illustrate this conjecture using the following general result for Ringel--Samokhin-type decompositions of length~3.

\begin{proposition}
  \label{prop:BOS}
  Let~$X$ be a smooth projective variety. Assume that we have a semiorthogonal decomposition
  \begin{equation}
    \label{equation:ringel-samokhin-length-3}
    \dbcoh{X}
    =
    \langle
      \cL_1,
      \cA,
      \cL_2
    \rangle
  \end{equation}
  where~$\cL_1,\cL_2$ are exceptional line bundles. Then this decomposition is of Ringel--Samokhin-type if and only if~$\cL_1\otimes \cL_2^{\vee}$ is a theta characteristic of~$X$.
\end{proposition}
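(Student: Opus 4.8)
The goal is to characterize when the length-3 semiorthogonal decomposition $\dbcoh{X}=\langle\cL_1,\cA,\cL_2\rangle$ with exceptional line bundles~$\cL_1,\cL_2$ is of Ringel--Samokhin type. The plan is to unwind the definition directly: choose the anti-equivalence to be $\sigma=\RRHHom(-,\cN)$ for a line bundle~$\cN$ to be determined, compute the three decompositions $\langle\cB_0,\cB_1,\cB_2\rangle$ (image under~$\sigma$ of the reversed decomposition), $\langle\cC_0,\cC_1,\cC_2\rangle$ (left dual), and then force $\cC_k=\sigma(\cB_k)$ for $k=0,1,2$. Since the two outer blocks are line bundles, everything can be tracked through explicit mutations, and the condition will collapse to a single identity on line bundles, which we will show is equivalent to $\cL_1\otimes\cL_2^\vee$ being a theta characteristic, i.e.\ $(\cL_1\otimes\cL_2^\vee)^{\otimes 2}\cong\omega_X$.

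\textbf{Key steps.} First I would note that $\sigma=\RRHHom(-,\cN)$ is an anti-equivalence and an involution precisely when $\cN$ is a line bundle (involutivity: $\RRHHom(\RRHHom(-,\cN),\cN)\cong-$ for~$\cN$ invertible). Applying~$\sigma$ to the reversed decomposition $\langle\cL_2,\cA,\cL_1\rangle$ gives
\begin{equation}
  \dbcoh{X}=\langle\cL_2^\vee\otimes\cN,\ \sigma(\cA),\ \cL_1^\vee\otimes\cN\rangle,
\end{equation}
so $\cB_0=\cL_2^\vee\otimes\cN$, $\cB_1=\sigma(\cA)$, $\cB_2=\cL_1^\vee\otimes\cN$. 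Second, I would compute the left dual of $\langle\cL_1,\cA,\cL_2\rangle$ using \cite[Definition~3.6]{1410.3742}: the left dual has the form $\langle\cC_0,\cC_1,\cC_2\rangle$ where $\cC_0$ is a right mutation of~$\cL_1$ through~$\langle\cA,\cL_2\rangle$ (equivalently $\cC_0=\cL_1\otimes\omega_X$ twisted appropriately via Serre duality, since $\cL_1={}^\perp\langle\cA,\cL_2\rangle$ so its left-dual partner is $\serre(\cL_1)\cong\cL_1\otimes\omega_X[\dim X]$), that $\cC_2$ is just~$\cL_2$ (being right-orthogonal it is self-dual in the relevant slot up to the standard shift), and $\cC_1$ is the appropriate mutation of~$\cA$. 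Third, I would impose $\cC_k=\sigma(\cB_k)$. The outer conditions read
\begin{equation}
  \cL_1\otimes\omega_X \cong \sigma(\cB_0)=\cB_0^\vee\otimes\cN=(\cL_2\otimes\cN^\vee)\otimes\cN=\cL_2,
\end{equation}
wait --- more precisely $\sigma(\cB_0)=\cL_2\otimes\cN^{\otimes 2}\otimes\cN^{\vee}\cdots$; carrying the bookkeeping through, the two outer equalities become $\cN^{\otimes 2}\cong\omega_X\otimes\cL_1^{\otimes 2}\otimes\cdots$ which after collecting the $\cL_i$ twists reduces, together with the condition $\sigma$ must stabilize the whole setup, to $\cN\cong\cL_1\otimes\cL_2^\vee\otimes(\text{fixed twist})$ and the single closing constraint $(\cL_1\otimes\cL_2^\vee)^{\otimes 2}\cong\omega_X$. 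The middle condition $\cC_1=\sigma(\cB_1)$ I would then check is automatically satisfied once the outer ones hold, because the middle block is determined as the orthogonal complement of the (matching) outer blocks, so it imposes nothing new.

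\textbf{The main obstacle.} The delicate point is the bookkeeping of twists and shifts in the left-dual decomposition: the definition in \cite[Definition~3.6]{1410.3742} involves iterated mutation functors, and Serre functors contribute both a line bundle twist ($\otimes\omega_X$) and a homological shift ($[\dim X]$). I must be careful that these shifts cancel correctly so that the comparison $\cC_k=\sigma(\cB_k)$ is an equality of \emph{subcategories} (which is insensitive to shift) rather than of objects, which simplifies matters --- the shifts are then irrelevant and only the line-bundle twists matter. Once that is pinned down, identifying $\cN$ and extracting the clean statement ``$\cL_1\otimes\cL_2^\vee$ is a theta characteristic'' is a short computation. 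For the converse direction, given such a theta characteristic I would simply set $\cN$ to the forced value and verify the three equalities hold, which is the same computation run backwards. I expect the whole argument to be two pages, with the left-dual mutation computation being the only step requiring genuine care.
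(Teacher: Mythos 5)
Your overall strategy is the same as the paper's: take $\sigma=\RRHHom(-,\cN)$ for a line bundle $\cN$, compute the decomposition $\langle\cB_0,\cB_1,\cB_2\rangle$ explicitly (which you do correctly, obtaining $\langle\cL_2^\vee\otimes\cN,\sigma(\cA),\cL_1^\vee\otimes\cN\rangle$), compute the left dual, match the outer blocks, and note that the middle block is determined by the outer two so imposes nothing new. However, the one computation that carries all the content of the proposition --- the left dual and the matching of outer components --- does not close as written, and this is a genuine gap rather than a presentational one.

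Two things go wrong. First, the left dual of $\langle\cL_1,\cA,\cL_2\rangle$ is $\langle\cL_2,\mathbf{L}_{\cL_2}\cA,\cL_1\otimes\omega_X^\vee\rangle$: the last component of the original sits in the \emph{first} slot unchanged, and the first component moves to the \emph{last} slot via the inverse Serre functor, hence is twisted by $\omega_X^\vee$, not $\omega_X$. Your $\langle\cL_1\otimes\omega_X,\cC_1,\cL_2\rangle$ has the slots reversed and the wrong power of $\omega_X$; it is not even a semiorthogonal decomposition in general, since $\Hom^\bullet(\cL_2,\cL_1\otimes\omega_X)\cong\Hom^\bullet(\cL_1,\cL_2)^\vee$ (up to shift) need not vanish. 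Second, you impose $\cC_k=\sigma(\cB_k)$; since $\sigma$ is an involution this equals $\cA_{n-k}$, so the condition would no longer involve $\sigma$ at all. The comparison that actually produces the theta-characteristic condition is between the left dual and $\langle\cB_0,\cB_1,\cB_2\rangle$ itself, and this is what the paper's proof uses. These two slips are why your displayed equation degenerates to $\cL_1\otimes\omega_X\cong\cL_2$, which is not equivalent to the asserted condition; the subsequent ``carrying the bookkeeping through'' is exactly the missing proof, not a routine simplification. Done correctly, the first slots give $\cL_2\cong\cL_2^\vee\otimes\cN$, hence $\cN\cong\cL_2^{\otimes 2}$, and the last slots then give $\cL_1\otimes\omega_X^\vee\cong\cL_1^\vee\otimes\cL_2^{\otimes 2}$, i.e.\ $(\cL_1\otimes\cL_2^\vee)^{\otimes 2}\cong\omega_X$, as required.
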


\begin{proof}
  A decomposition of the form~\eqref{equation:ringel-samokhin-length-3} is determined by the line bundles~$\cL_1$ and~$\cL_2$, as the subcategory~$\cA$ consists of the objects~$E\in\dbcoh{X}$ such that
  \begin{equation}
  \Hom^\bullet(\RRHHom(E,\cO_X),\cL_1)=\Hom^\bullet(\cL_2,E)=0.
  \end{equation}

  The left dual decomposition of~\eqref{equation:ringel-samokhin-length-3} is given by
  \begin{equation}
    \dbcoh{X}
    =
    \langle
      \cL_2,
      \mathbf{L}_{\cL_2}\cA,
      \cL_1\otimes\omega_X^\vee
      \rangle.
  \end{equation}
  On the other hand, we consider the anti-equivalence given by dualizing and tensoring with a line bundle~$\cM$, to obtain the semiorthogonal decomposition
  \begin{equation}
    \dbcoh{X}
    =
    \langle
      \cL_2^\vee\otimes\cL,
      \cA^\vee\otimes\cL,
      \cL_1^\vee\otimes\cL
    \rangle.
  \end{equation}
  These two semiorthogonal decompositions agree if and only if~$\cL\cong\cL_2^{\otimes 2}$, so that we have~$(\cL_1^\vee\otimes\cL_2)^{\otimes2}\cong\omega_X^\vee$ and hence~$\cL_1\otimes\cL_2^\vee$ is a theta characteristic of~$X$.
\end{proof}
By \cref{proposition:bondal-orlov-lefschetz-decomposition},
using that~$\odd$ is of index~2, we obtain the following corollary.
\begin{corollary}
  \Cref{conjecture:ringel-samokhin-sod} holds for~$g=2$.
\end{corollary}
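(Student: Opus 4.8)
The statement to prove is the corollary: Conjecture~\ref{conjecture:ringel-samokhin-sod} holds for $g=2$. For $g=2$ the decomposition in Conjecture~\ref{conjecture:ringel-samokhin-sod} has length $3$ and reads $\dbcoh{\odd}=\langle\dbcoh{\pt},\dbcoh{C},\dbcoh{\pt}\rangle$, with anti-equivalence $\RRHHom(-,\cO_{\odd}(1))$; the two outer blocks are generated by exceptional line bundles. So the plan is to recognize this as an instance of the length-$3$ criterion in Proposition~\ref{prop:BOS}. By Proposition~\ref{proposition:bondal-orlov-lefschetz-decomposition} (Bondal--Orlov) we already have the semiorthogonal decomposition $\dbcoh{\odd}=\langle\cO_{\odd},\Phi_{\cW}(\dbcoh{C}),\Theta\rangle$ for a genus-$2$ curve, which is of the form \eqref{equation:ringel-samokhin-length-3} with $\cL_1=\cO_{\odd}$, $\cL_2=\Theta$, and $\cA=\Phi_{\cW}(\dbcoh{C})$.

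The main point is then the theta-characteristic condition. By Proposition~\ref{prop:BOS}, the decomposition $\langle\cO_{\odd},\Phi_{\cW}(\dbcoh{C}),\Theta\rangle$ is of Ringel--Samokhin type precisely when $\cL_1\otimes\cL_2^\vee=\cO_{\odd}\otimes\Theta^\vee=\Theta^\vee$ is a theta characteristic, i.e. when $(\Theta^\vee)^{\otimes2}\cong\omega_{\odd}^\vee$, equivalently $\omega_{\odd}\cong\Theta^{\otimes2}$. Here one invokes the standard fact that $\odd$ is a Fano variety of index~$2$ with $\Pic(\odd)\cong\bZ$ generated by $\Theta=\cO_{\odd}(1)$, so that $\omega_{\odd}^\vee\cong\cO_{\odd}(2)\cong\Theta^{\otimes2}$; this is exactly the "using that $\odd$ is of index $2$" remark that precedes the corollary. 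Hence $\Theta^\vee$ is a theta characteristic and the hypothesis of Proposition~\ref{prop:BOS} is met.

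Finally one should check that the anti-equivalence produced by Proposition~\ref{prop:BOS} — dualize and twist by $\cL\cong\cL_2^{\otimes2}=\Theta^{\otimes2}\cong\cO_{\odd}(2)$ — together with the identification of blocks it furnishes, matches the anti-equivalence $\RRHHom(-,\cO_{\odd}(1))$ named in Conjecture~\ref{conjecture:ringel-samokhin-sod}. This is a bookkeeping step: the proof of Proposition~\ref{prop:BOS} shows the left dual of $\langle\cL_1,\cA,\cL_2\rangle$ agrees with the $\sigma$-image of the reversed decomposition when $\cL=\cL_2^{\otimes2}$, and one rewrites $(-)^\vee\otimes\cO_{\odd}(2)$ as a shift-and-twist of $\RRHHom(-,\cO_{\odd}(1))$ after accounting for the normalization of the anti-involution; since $\cO_{\odd}(1)$ is the chosen square root in the conjecture, the two prescriptions coincide. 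The only genuinely substantive input is the index-$2$ Fano property of $\odd$ giving $\omega_{\odd}\cong\Theta^{\otimes2}$, which is classical; the rest is assembling Propositions~\ref{proposition:bondal-orlov-lefschetz-decomposition} and~\ref{prop:BOS}.
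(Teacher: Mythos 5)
Your proposal is correct and follows essentially the same (one-line) route as the paper: combine the Bondal--Orlov decomposition $\langle\cO_{\odd},\Phi_{\cW}(\dbcoh{C}),\Theta\rangle$ of \cref{proposition:bondal-orlov-lefschetz-decomposition} with the length-3 criterion of \cref{prop:BOS}, the theta-characteristic condition $(\Theta^{\vee})^{\otimes 2}\cong\omega_{\odd}$ being exactly the index-$2$ Fano property of $\odd$. The only point you raise beyond the paper is the normalization of the anti-equivalence ($\RRHHom(-,\Theta^{\otimes 2})$ as produced by \cref{prop:BOS} versus the $\RRHHom(-,\cO_{\odd}(1))$ named in \cref{conjecture:ringel-samokhin-sod}); your treatment of that bookkeeping step is somewhat hand-wavy, but the paper's proof does not address it at all.
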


\section{Eigenvalue and critical value decompositions}
\label{section:qh-gp}
To understand the results in this section
we will briefly recall
homological mirror symmetry
for Fano varieties.
If~$X$ is a (smooth projective) Fano variety,
its mirror is expected to be
a Landau--Ginzburg model~$f\colon Y\to\mathbb{A}^1$
(where~$Y$ is a quasiprojective variety),
such that we have equivalences
of triangulated categories
\index{F@$\Fuk(X)$}\index{M@$\MF(Y,f)$}\index{F@$\FS(Y,f)$}
\begin{equation}
  \begin{aligned}
    \Fuk(X)&\cong\MF(Y,f), \\
    \dbcoh{X}&\cong\FS(Y,f).
  \end{aligned}
\end{equation}
On the first line we have that the Fukaya category of~$X$ has an \emph{orthogonal decomposition}, indexed by the eigenvalues of~$\mathrm{c}_1(X)*_0-$, see \cite{MR2929086}. For the \emph{matrix factorization category} we have an orthogonal decomposition, indexed by the critical values of~$f$ \cite[Proposition~1.14]{MR2101296}. Hence under the homological mirror symmetry conjecture these sets (of eigenvalues, and critical values) are the same.

For the second line there is no such natural (semiorthogonal) decomposition. But the philosophy behind Dubrovin's conjecture (which a priori is only formulated in the case of semisimple quantum cohomology) predicts how an orthogonal decomposition of the triangulated category~$\Fuk(X)$ gives rise to a semiorthogonal decomposition of~$\dbcoh{X}$. Generalizations of this philosophy to not necessarily semisimple quantum cohomology are discussed in \cite{MR4105948}.

Hence to understand semiorthogonal decompositions for~$\dbcoh{\odd}$ using mirror symmetry, one can study either of the following
\begin{enumerate}
  \item the eigenvalues of the quantum multiplication~$\mathrm{c}_1(\odd)*_0-$ on the quantum cohomology~$\QH^\bullet(\odd)$;\index{Q@$\QH^\bullet(X)$}
  \item the critical values of the potential~$f\colon Y\to\mathbb{A}^1$.
\end{enumerate}

\begin{remark}
  \label{remark:no-mirror}
  There is no worked out candidate construction for the Landau--Ginzburg mirror~$(Y,f)$ of~$\odd$ yet,
  let alone a proof of homological mirror symmetry.
  But the graph potentials we have constructed and studied
  in \cite{gp-tqft,gp-sympl}
  can be seen as open cluster charts of~$Y$,
  and gluing these tori together
  along the birational transformations between them
  (which we can do because of the compatibilities from \cite[Theorems~2.12 and 2.13]{gp-tqft})
  is a first step one can take
  in the construction of the Landau--Ginzburg model mirror to~$\odd$.
  We will recall this construction in \cref{subsection:graph-potentials}.
\end{remark}


A better understanding of the critical loci
would be the next step in understanding if and how
these graph potentials can be glued together to obtain (part of) the Landau--Ginzburg model.
In \cref{section:critical} we discuss the dimension of the different critical loci
as a first step in this program.

\subsection{Mu\~noz's eigenvalue decomposition for quantum cohomology}
The eigenvalues of quantum multiplication with~$\mathrm{c}_1$ have been described, albeit via an indirect route. The following summarizes this description.
\begin{proposition}[Mu\~noz]
  \label{proposition:munoz}
  The eigenvalue decomposition for the quantum multiplication by~$\cc_1(\odd)$ on~$\QH^\bullet(\odd)$ is
  \begin{equation}
    \QH^\bullet(\odd)
    =
    \bigoplus_{m=1-g}^{g-1}H_m
  \end{equation}
  where
  \begin{enumerate}
    \item the eigenvalues are~$8(1-g),8(2-g)\ii,8(3-g),\ldots,8(g-3),8(g-2)\ii,8(g-1)$;
    \item $H_m$ is isomorphic (as a vector space) to~$\HH^\bullet(\Sym^{g-1-|m|}C)$.
  \end{enumerate}
\end{proposition}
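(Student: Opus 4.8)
The plan is to reassemble Mu\~noz's determination of the small quantum cohomology ring of $\odd$, organised into three steps. First, recall that $\HH^\bullet(\odd)$ is generated by Newstead's classes: $\alpha\in\HH^2$ (normalised so that $\cc_1(\odd)=2\alpha$, i.e.\ $\alpha=\Theta$ is the ample generator), $\beta\in\HH^4$, and $\psi_1,\dots,\psi_{2g}\in\HH^3$, all obtained from the K\"unneth components of $\cc_2(\End\cW)$ on $C\times\odd$. The mapping class group acts through $\Sp(2g,\bZ)$ fixing $\alpha$ and $\beta$; since Gromov--Witten invariants are diffeomorphism invariants while all moduli spaces of a fixed genus are diffeomorphic, the quantum product is $\Sp(2g,\bZ)$-equivariant. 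Hence $\cc_1(\odd)*_0-=2\alpha*_0-$ preserves the decomposition of $\QH^\bullet(\odd)$ into $\Sp(2g,\bZ)$-isotypic summands, and it suffices to diagonalise it on each.

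Second, the isotypic decomposition is indexed by $k=0,\dots,g-1$, the $k$-th summand being $\Lambda^k_0\HH^1(C,\bQ)$, of dimension $\binom{2g}{k}-\binom{2g}{k-2}$, tensored with a multiplicity space which is a module over the subalgebra generated by $\alpha$, $\beta$, and $\gamma:=-2\sum_{i=1}^{g}\psi_i\psi_{i+g}$. Classically (Thaddeus, King--Newstead, Zagier) this multiplicity space is, up to a degree shift and a twist of the $\gamma$-action, the $\Sp$-invariant cohomology of the genus-$(g-k)$ moduli space; I would invoke Mu\~noz's quantum refinement of this recursion, which identifies it --- compatibly with $\alpha*_0-$ --- with the invariant part $\QH^\bullet_{\mathrm I}$ of the quantum cohomology of a genus-$(g-k)$ moduli space. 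This reduces the problem to diagonalising $\alpha*_0-$ on $\QH^\bullet_{\mathrm I}$ of a genus-$h$ moduli space for $1\le h\le g$.

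Third, one uses Mu\~noz's explicit presentation $\QH^\bullet_{\mathrm I}=\bQ[\alpha,\beta,\gamma]/(q_1^h,q_2^h,q_3^h)$, with the $q_i^h$ defined by an explicit recursion in $h$ (obtained via the Fukaya--Floer homology of $\Sigma\times S^1$), together with the fact that $\alpha*_0-$ is diagonalisable on this quotient; solving the recursion produces its eigenvalues. Accumulating over $k$ yields the eigenvalue list \eqref{equation:critical-values}, the overall factor $8$ reflecting $\cc_1(\odd)=2\alpha$ and Mu\~noz's normalisation of $\alpha$. For part (2) one then compares the dimension of the generalised $8m$-eigenspace --- a sum of the integers $\binom{2g}{k}-\binom{2g}{k-2}$ weighted by the eigenvalue multiplicities from the previous step --- with the total Betti number of $\Sym^{g-1-|m|}C$, computed from Macdonald's generating series $\sum_{d\geq0}P_{\Sym^dC}(t)\,x^d=(1+xt)^{2g}/\bigl((1-x)(1-xt^2)\bigr)$; the two agree by a generating-function identity, giving $H_m\cong\HH^\bullet(\Sym^{g-1-|m|}C)$ as vector spaces. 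The eigenvalue $0$ occurs for every $g$ (namely $m=0$), with $H_0\cong\HH^\bullet(\Sym^{g-1}C)$, consistent with \cref{remark:kuznetsov-smirnov-for-odd-cohomology}.

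The genuine content, and the main obstacle, is the second step: the quantum-corrected genus recursion, and the precise way the $\gamma$-twist and the degree shifts enter, are not formal and are exactly what Mu\~noz had to establish. Everything in the first and third steps is then either general nonsense (the $\Sp$-equivariance, the reduction to the invariant part) or a bookkeeping exercise, the only real hazard being to keep the normalisations of $\alpha$, $\Theta$, and $\cc_1(\odd)$ --- hence the factor $8$ --- consistent throughout.
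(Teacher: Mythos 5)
Your proposal is correct in substance, but it takes a different expository route from the paper: the paper's proof is purely a citation assembly, whereas you reconstruct the internal machinery of the results being cited. Concretely, the paper obtains the eigenvalues from Mu\~noz's computation of multiplication by the Picard generator on the \emph{instanton Floer homology} of~$\Sigma\times S^1$ \cite[Proposition~20]{MR1670396}, transports this to~$\QH^\bullet(\odd)$ via the ring identification of \cite[Theorem~1]{MR1670396} and \cite[Corollary~21]{MR1695800} (which itself goes through explicit presentations of both rings), and then quotes \cite[Corollary~3.7]{MR1828464} for the identification of the eigenspaces with~$\HH^\bullet(\Sym^{g-1-|m|}C)$, which had been conjectured in \cite[Conjecture~24]{MR1670396}. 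You instead work directly on the quantum cohomology side and sketch what is inside those theorems: the $\Sp(2g,\bZ)$\dash equivariance of the quantum product, the isotypic decomposition into $\Lambda^k_0\HH^1(C)$ tensored with multiplicity modules, Mu\~noz's quantum refinement of the Thaddeus--King--Newstead--Zagier genus recursion, and the presentation $\bQ[\alpha,\beta,\gamma]/(q_1^h,q_2^h,q_3^h)$ of the invariant part. This buys a genuine understanding of where the eigenvalues and the factor~$8$ come from, at the cost of having to keep normalisations straight; the paper's version buys brevity and makes transparent exactly which published statements the proposition rests on (in particular, that the Floer--quantum identification was itself originally conjectural). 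Your honest flagging of the quantum genus recursion as the non-formal core is accurate --- that is precisely the content of the Mu\~noz references --- and your dimension-count strategy for part~(2) via Macdonald's formula is legitimate since only a vector-space isomorphism is claimed. As a proof of the proposition both arguments ultimately defer to Mu\~noz at the same critical points, so neither is more complete than the other.
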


\begin{proof}
  This is a combination of various results of Mu\~noz. First we have \cite[Proposition~20]{MR1670396}, describing the eigenvalues of multiplication by the generator of the Picard group on the instanton Floer homology of the 3-manifold given by the product of the curve (seen as a real manifold) and~$S^1$. The (conjectural) identification as rings from \cite[Theorem~1]{MR1670396} with the quantum cohomology of~$\odd$ is in turn given by \cite[Corollary~21]{MR1695800} after an explicit description in terms of generators and relations for both rings. Hence we can interpret any result for instanton Floer homology as a result for quantum cohomology. In \cite[Conjecture~24]{MR1670396} the conjectural decomposition was given, and in \cite[Corollary~3.7]{MR1828464} it was proved.
\end{proof}

\paragraph{Property $\cO$}
An interesting symmetry property for the quantum cohomology of a Fano variety~$X$ of index~$r$, is obtained by considering the exceptional collection~$\cO_X,\ldots,\cO_X(r-1)$ in~$\dbcoh{X}$. The existence of this exceptional collection can be encoded in quantum cohomology using \cite[Definition~3.1.1]{MR3536989} as follows.
\begin{definition}
  Let~$X$ be a smooth projective Fano variety, of index~$r\geq 1$. Define\index{T@$T$}
  \begin{equation}
    T:=\max\{|u|\mid \text{$u\in\bC$ is an eigenvalue of~$\mathrm{c}_1(X)*_0-$}\}\in\overline{\bQ}_{\geq 0}.
  \end{equation}
  Then we say that~$X$ has \emph{property~$\cO$} if
  \begin{enumerate}
    \item $T$ is an eigenvalue of~$\mathrm{c}_1(X)*_0-$, of multiplicity~1;
    \item if~$u$ is another eigenvalue of~$\mathrm{c}_1(X)*_0-$ such that~$|u|=T$, then there exists a primitive~$r$th root of unity~$\zeta$ such that~$u=\zeta T$.
  \end{enumerate}
\end{definition}
In \cite[Conjecture~3.1.2]{MR3536989} it was conjectured that this property holds for all Fano varieties. In \cite{MR3581315} it was checked for all homogeneous varieties~$G/P$, and in \cite[Corollary~7.7]{MR4105948} the case of complete intersections of index~$r\geq 2$ in~$\bP^n$ was checked.

Hence by \cref{proposition:munoz} we immediately obtain the following result.
\begin{corollary}
  \label{corollary:property-O}
  Property~$\cO$ holds for~$\odd$, where~$T=8(g-1)$.
\end{corollary}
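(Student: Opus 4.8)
The corollary is a direct consequence of \cref{proposition:munoz} together with the fact that~$\odd$ is Fano of index~$r=2$. The plan is simply to extract the required numerical data from Mu\~noz's eigenvalue decomposition. Since~$r=2$, the unique primitive~$r$th root of unity is~$-1$, so verifying property~$\cO$ amounts to three checks: that~$T=\max\{|u|\}$ over the eigenvalues~$u$ of~$\mathrm{c}_1(\odd)*_0-$ equals~$8(g-1)$; that~$8(g-1)$ occurs with multiplicity exactly~$1$; and that the only other eigenvalue of modulus~$8(g-1)$ is~$-8(g-1)=(-1)\cdot T$.

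First I would pin down~$T$. By \cref{proposition:munoz}(1) every eigenvalue has the form~$8m$ or~$8m\ii$ for an integer~$m$ with~$1-g\le m\le g-1$, so its modulus is~$8|m|\le 8(g-1)$, with equality precisely for~$m=g-1$ (eigenvalue~$8(g-1)$) and for~$m=1-g$ (eigenvalue~$8(1-g)=-8(g-1)$). Hence~$T=8(g-1)$, and~$-8(g-1)$ is the only other eigenvalue of maximal modulus, which settles condition~(2) in the definition of property~$\cO$ with~$\zeta=-1$. For the multiplicity one must only rule out accidental coincidences among the listed values: a real value~$8m$ equals an imaginary value~$8m'\ii$ only if both vanish, and~$8m=8m'$ forces~$m=m'$, so the eigenvalue~$8(g-1)$ arises only from the index~$m=g-1$. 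By \cref{proposition:munoz}(2) the corresponding summand~$H_{g-1}$ is isomorphic as a vector space to~$\HH^\bullet(\Sym^0C)=\HH^\bullet(\pt)=k$, which is one-dimensional, giving condition~(1).

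The hard part is essentially nonexistent here: all of the substance is contained in \cref{proposition:munoz} (which itself rests on the work of Mu\~noz recalled in its proof), and what remains is the elementary bookkeeping above. The only step deserving any attention is the verification that no two of the listed eigenvalues collide at modulus~$8(g-1)$, since that is what makes the multiplicity count genuinely equal to~$1$.
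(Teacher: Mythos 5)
Your proposal is correct and takes essentially the same route as the paper, which simply deduces the corollary directly from \cref{proposition:munoz}; you have just made explicit the bookkeeping (maximal modulus attained only at $m=\pm(g-1)$, multiplicity one since $H_{g-1}\cong\HH^\bullet(\pt)$, and $-8(g-1)=\zeta T$ for the primitive square root of unity $\zeta=-1$ matching the index~$2$ of~$\odd$) that the paper leaves implicit.
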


%

\subsection{Graph potentials}
\label{subsection:graph-potentials}
Now we consider the other side of the mirror,
and introduce graph potentials as candidate building blocks.
Whilst no construction of the homological mirror exists yet as discussed in \cref{remark:no-mirror}
we will study the geometry of the critical values and loci in \cref{subsection:critical-values}.
Depending on the viewpoint,
this gives further evidence that graph potentials are indeed building blocks for the mirror to~$\odd$,
or it gives further evidence for \cref{conjecture:ur-sod}.

\paragraph{Graph potentials}
We briefly recall the construction from \cite[\S2.1]{gp-tqft}.
Let~$\graph=(V,E)$ be a connected undirected trivalent graph, of genus~$g$.
Thus~$\#V=2g-2$ and~$\#E=3g-3$.
As explained in loc.~cit.
it is natural to define everything in terms of graph (co)homology.
The relevant space is
\begin{equation}
  \widetilde{N}_\graph:=\mathrm{C}^1(\gamma,\mathbb{Z})
\end{equation}
in which a vertex~$v\in V$ which is adjacent to the edges~$e_{v_i},e_{v_j},e_{v_k}$
defines cochains~$x_i,x_j,x_k$ by~$x_i(e_{v_a})=\delta_{i,v_a}$.

We will also decorate trivalent graphs,
by assigning a color to every vertex,
i.e.~we consider~$c\colon V\to\mathbb{F}_2$
where~$c(v)=0$ (resp.~$c(v)=1$) means the vertex~$v$ is uncolored (resp.~colored).
An uncolored vertex is drawn as a circle~\begin{tikzpicture} \node[vertex] (A) at (0,0) {};\end{tikzpicture}
whilst a colored vertex corresponds to a disc~\begin{tikzpicture} \node[vertex,fill] (A) at (0,0) {};\end{tikzpicture}.

\begin{construction}
  \label{construction:graph-potential}
  The \emph{vertex potential}~$\widetilde{W}_{v,c(v)}$ for a vertex~$v\in V$ in a colored graph~$(\gamma,c)$ is the sum of the four monomials
  \begin{equation}
    x_i^{(-1)^{s_i}}x_j^{(-1)^{s_j}}x_k^{(-1)^{s_k}}
  \end{equation}
  where~$(s_i,s_j,s_k)\in\mathbb{F}_2^{\oplus3}$ ranges over all sign choices such that~$s_i+s_j+s_k=c(v)$. Here~$x_i$ is the coordinate variables in~$\mathbb{Z}[\widetilde{N}_\graph]$ corresponding to the~$i$th edge in an enumeration~$e_1,\ldots,e_{3g-3}$ of the edges.

  Hence in the variables~$x,y,z$ there are precisely two cases:
  \begin{equation}
    \begin{aligned}
      \widetilde{W}_{v,0}&=xyz+\frac{x}{yz}+\frac{y}{xz}+\frac{z}{xy} \\
      \widetilde{W}_{v,1}&=\frac{1}{xyz}+\frac{xy}{z}+\frac{xz}{y}+\frac{yz}{x}.
    \end{aligned}
  \end{equation}
  The global structure of the colored graph~$(\gamma,c)$ then defines the \emph{graph potential} as the sum of vertex potentials
  \begin{equation}
    \widetilde{W}_{\graph,c}:=\sum_{v\in V}\widetilde{W}_{v,c(v)}.
  \end{equation}
\end{construction}

\begin{figure}[ht]
  \centering
  \begin{tikzpicture}[scale=1.75]
    \node[vertex] (A) at (0,0) {};
    \node[vertex, fill] (B) at (1,0) {};

    \draw (A) +(0,-0.1) node [below] {$1$};
    \draw (B) +(0,-0.1) node [below] {$2$};

    \draw (A) edge [bend left]  node [above]      {$x$} (B);
    \draw (A) edge              node [fill=white] {$y$} (B);
    \draw (A) edge [bend right] node [below]      {$z$} (B);
  \end{tikzpicture}
  \caption{Colored Theta graph in genus $g=2$}
  \label{figure:theta-graph}
\end{figure}
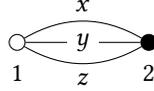

\begin{example}
  In genus~$g=2$ there are precisely two trivalent graphs,
  the Theta graph and the dumbbell graph \cite[Figure~1]{gp-tqft}.
  In \cref{figure:theta-graph} we have given the Theta graph
  with the second vertex colored,
  and following \cref{construction:graph-potential} we get that the graph potential is given by
  \begin{equation}
    \widetilde{W}_{\graph,c}=xyz+\frac{x}{yz}+\frac{y}{xz}+\frac{z}{xy}+\frac{1}{xyz}+\frac{xy}{z}+\frac{xz}{y}+\frac{yz}{x}.
  \end{equation}
\end{example}
More examples are given in \cite[\S2.1]{gp-tqft}.

\paragraph{Elementary transformations}
Trivalent graphs correspond to pair-of-pants decompositions,
and for a given surface there exist many such decompositions up to isotopy.
By Hatcher--Thurston \cite[Appendix]{MR0579573} they are related via certain operations,
and for our purposes we only need operation~(I) of op.~cit.
The operation this induces on trivalent graphs is called an \emph{elementary transformation}
and the local picture is given in \cref{figure:elementary-transformation}

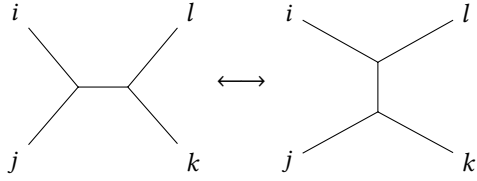
\begin{figure}[t!]
	\centering

	\begin{tikzpicture}[xscale=0.66,baseline=(current bounding box.center)]
	\node (i) at (0,2) [left]  {$i$};
	\node (j) at (0,0) [left]  {$j$};
	\node (k) at (3,0) [right] {$k$};
	\node (l) at (3,2) [right] {$l$};

	\draw (i) -- (1,1) -- (j) (1,1) -- (2,1) -- (k) (2,1) -- (l);
	\end{tikzpicture}
	$\longleftrightarrow$
	\begin{tikzpicture}[yscale=0.66,baseline=(current bounding box.center)]
	\node (i) at (0,3) [left]  {$i$};
	\node (j) at (0,0) [left]  {$j$};
	\node (k) at (2,0) [right] {$k$};
	\node (l) at (2,3) [right] {$l$};

	\draw (i) -- (1,2) -- (l) (1,2) -- (1,1) -- (j) (1,1) -- (k);
	\end{tikzpicture}
	\caption{Local picture of an elementary transformation of a trivalent graph}
	\label{figure:elementary-transformation}
\end{figure}

In \cite[\S2.2]{gp-tqft} we have discussed how elementary transformations
transform the associated graph potentials.
The main results
(given as Corollary~2.9 and Theorems~2.12 and~2.13\ in op.~cit.)
state that
\begin{itemize}
  \item up to biregular automorphism of the torus
    the graph potential only depends on
    the \emph{parity} of the coloring;
  \item up to rational change of coordinates
    the graph potential only depends on
    the \emph{genus} of the trivalent graph.
\end{itemize}

\subsection{Critical value decomposition for graph potentials}
\label{subsection:critical-values}
Our final goal is to prove \cref{theorem:critical-loci}.
In this section we will describe the critical values
and discuss the conifold point of the graph potential.
The computation of the critical loci is deferred to \cref{section:critical}.

\paragraph{Critical values}
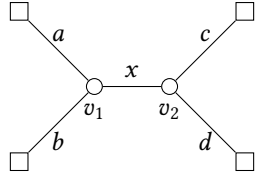
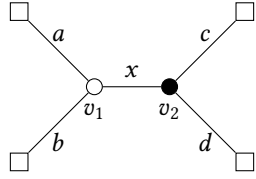
\begin{figure}[t!]
  \centering
  \begin{subfigure}[b]{\textwidth}
    \centering
    \begin{tikzpicture}[scale=1]
      \node[vertex] (first)  at (0,0) {};
      \node[vertex] (second) at (1,0) {};
      \node[draw] (A) at (-1,1)  {};
      \node[draw] (B) at (-1,-1) {};
      \node[draw] (C) at (2,1)   {};
      \node[draw] (D) at (2,-1)  {};

      \draw (first)  +(0,-0.1) node [below] {$v_1$};
      \draw (second) +(0,-0.1) node [below] {$v_2$};

      \draw (A)     edge node [above] {$a$} (first);
      \draw (B)     edge node [below] {$b$} (first);
      \draw (first) edge node [above] {$x$} (second);
      \draw (C)     edge node [above] {$c$} (second);
      \draw (D)     edge node [below] {$d$} (second);
    \end{tikzpicture}
    \caption{Local picture of trivalent graph with $c$ trivial}
    \label{subfigure:local-picture-no-colors}
  \end{subfigure}

  {\ }

  \begin{subfigure}[b]{\textwidth}
    \centering
    \begin{tikzpicture}[scale=1]
      \node[vertex]       (first)  at (0,0) {};
      \node[vertex, fill] (second) at (1,0) {};
      \node[draw] (A) at (-1,1)  {};
      \node[draw] (B) at (-1,-1) {};
      \node[draw] (C) at (2,1)   {};
      \node[draw] (D) at (2,-1)  {};

      \draw (first)  +(0,-0.1) node [below] {$v_1$};
      \draw (second) +(0,-0.1) node [below] {$v_2$};

      \draw (A)     edge node [above] {$a$} (first);
      \draw (B)     edge node [below] {$b$} (first);
      \draw (first) edge node [above] {$x$} (second);
      \draw (C)     edge node [above] {$c$} (second);
      \draw (D)     edge node [below] {$d$} (second);
    \end{tikzpicture}
    \caption{Local picture of trivalent graph with $c$ non-trivial}
    \label{subfigure:local-picture-colors}
  \end{subfigure}
  \caption{Local pictures of colored trivalent graphs}
  \label{figure:local-picture}
\end{figure}

Let~$\graph$ be a trivalent graph, and~$c$ be a coloring.
By \cite[Corollary~2.9]{gp-tqft} we can assume that~$c$ has at most one colored vertex.
To determine the critical values of the graph potential~$\widetilde{W}_{\graph,c}$ we need to determine solutions to the equations
\begin{equation}
  x_i\frac{\partial}{\partial x_i}\widetilde{W}_{\graph,c}=0,\forall i=1,\ldots,3g-3.
\end{equation}

As for the description of the behavior of the graph potential under elementary transformations in the previous section,
we will study the local picture for (colored) trivalent graphs.
These are given in \cref{figure:local-picture},
where \cref{subfigure:local-picture-no-colors}
(resp.~\cref{subfigure:local-picture-colors})
corresponds to the case where the coloring is trivial
(resp.~the vertex~$v_2$ is the unique colored vertex).

We can write the potential in the uncolored (resp.~colored) case as
\begin{equation}
  \label{equation:edge-potential-uncolored}
  \widetilde{W}_{\graph,0}=x\left( ab+\frac{1}{ab}+cd+\frac{1}{cd} \right)+x^{-1}\left( \frac{a}{b}+\frac{b}{a}+\frac{c}{d}+\frac{d}{c} \right)+\widetilde{W}_{\graph,0}^{\text{frozen}}
\end{equation}
resp.
\begin{equation}
  \label{equation:edge-potential-colored}
  \widetilde{W}_{\graph,c}=x\left( ab+\frac{1}{ab}+cd+\frac{1}{cd} \right)+x^{-1}\left( \frac{a}{b}+\frac{b}{a}+cd+\frac{1}{cd} \right)+\widetilde{W}_{\graph,c}^{\text{frozen}}
\end{equation}
where we have split off the frozen part which does not involve the variable~$x$.
We call these expressions (without the frozen part) \emph{edge potentials}.

Hence we are interested in solutions to the equations~$x\frac{\partial}{\partial x}\widetilde{W}_{\graph,0}=0$,
resp.~$x\frac{\partial}{\partial x}\widetilde{W}_{\graph,c}=0$, which can be rewritten by clearing the denominators as
\begin{equation}
  \label{equation:critical-point-uncolored}
  x(1+abcd)(ab+cd)=x^{-1}(ac+bd)(ad+bc)
\end{equation}
resp.
\begin{equation}
  \label{equation:critical-point-colored}
  x(a+bcd)(b+acd)=x^{-1}(c+abd)(d+abc).
\end{equation}

\paragraph{Perfect matchings}
The graph potential is defined as a sum over the vertex potentials associated to the vertices.
But for certain graphs we can alternatively write the graph potential as a sum of edge potentials.
For this we need a perfect matching,
i.e.~a subset~$P\subseteq E$ of edges
such that every vertex is contained in precisely one~$e\in P$.
If the genus of~$\graph$ is~$g$ then~$\#P=g-1$.

Given a perfect matching~$P$ for~$\graph$
we can rewrite the graph potential by summing over the edges in the matching,
and using the expressions~\eqref{equation:edge-potential-uncolored},
\eqref{equation:edge-potential-colored} for the edge potentials.
Let~$e$ denote the edge between~$v_1$ and~$v_2$,
and label the variables in \cref{figure:local-picture} as~$a_e,b_e,c_e,d_e,x_e$.
Then in the uncolored case we have
\begin{equation}
  \label{equation:sum-of-edge-potentials-uncolored}
  \widetilde{W}_{\graph,0}=\sum_{e\in P}x_e\left( a_eb_e+\frac{1}{a_eb_e}+c_ed_e+\frac{1}{c_ed_e} \right)+x_e^{-1}\left( \frac{a_e}{b_e}+\frac{b_e}{a_e}+\frac{c_e}{d_e}+\frac{d_e}{c_e} \right)
\end{equation}
whilst in the colored case we let~$e_c\in E$ denote the colored vertex and omit the subscript~$e_c$ from the variables, so that
\begin{equation}
  \label{equation:sum-of-edge-potentials-colored}
  \begin{aligned}
    \widetilde{W}_{\graph,c}&=x\left( ab+\frac{1}{ab}+cd+\frac{1}{cd} \right)+x^{-1}\left( \frac{a}{b}+\frac{b}{a}+cd+\frac{1}{cd} \right) \\
    &\qquad\sum_{e\in P\setminus\{e_c\}}x_e\left( a_eb_e+\frac{1}{a_eb_e}+c_ed_e+\frac{1}{c_ed_e} \right)+x_e^{-1}\left( \frac{a_e}{b_e}+\frac{b_e}{a_e}+\frac{c_e}{d_e}+\frac{d_e}{c_e} \right).
  \end{aligned}
\end{equation}

By Petersen's theorem, a trivalent graph which is bridgeless (i.e.~we cannot remove an edge to make it disconnected) has at least one perfect matching. Such a graph exists for every genus~$g$. Using \cite[Theorems~2.12 and~2.13]{gp-tqft} we have that the graph potentials for different graphs of the same genus are related via rational changes of coordinates, hence its critical values do not depend on the choice of graph.

\begin{proposition}
  \label{proposition:critical-values}
  Let~$\graph$ be a trivalent graph. Let~$c$ be a coloring with at most one colored vertex. Then the following are critical values of the graph potential~$\widetilde{W}_{\graph,c}$:
  \begin{itemize}
    \item purely real critical values:
      \begin{equation}
        8g-8-16k \text{ for $k=0,\ldots,g-1$};
      \end{equation}
    \item purely imaginary critical values:
      \begin{equation}
        (8g-16-16k)\ii \text{ for~$k=0,\ldots,g-2$}.
      \end{equation}
  \end{itemize}
  The critical value~$0$ is listed twice, depending on the parity of~$g$ as a purely real or a purely imaginary critical value.
\end{proposition}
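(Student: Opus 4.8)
The plan is to reduce the computation of critical values to a one-variable problem along the edges of a perfect matching, and then to exploit the combinatorics of sign choices to enumerate the possible values. By \cite[Theorems~2.12 and~2.13]{gp-tqft} we may fix a convenient bridgeless trivalent graph of genus~$g$ (Petersen's theorem guarantees a perfect matching~$P$), and by \cite[Corollary~2.9]{gp-tqft} we may assume~$c$ has at most one colored vertex. Writing the graph potential as a sum of edge potentials via~\eqref{equation:sum-of-edge-potentials-uncolored} resp.~\eqref{equation:sum-of-edge-potentials-colored}, the idea is that at a critical point each edge variable~$x_e$ is determined by the surrounding variables through~\eqref{equation:critical-point-uncolored} resp.~\eqref{equation:critical-point-colored}, and the frozen variables~$a_e,b_e,c_e,d_e$ should, at a critical point, be forced to take values in~$\{+1,-1\}$ (or roots of unity), so that the whole potential evaluates to a sum of~$\pm$-signs, one contribution per vertex.

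First I would carry out the local analysis at a single matched edge. Solving~\eqref{equation:critical-point-uncolored} for~$x$ gives~$x^2=(ac+bd)(ad+bc)/[(1+abcd)(ab+cd)]$, and substituting back, the edge potential evaluated at the critical point becomes~$\pm2\sqrt{(1+abcd)(ab+cd)(ac+bd)(ad+bc)}/(\text{something})$; the key algebraic observation is that when $a,b,c,d\in\{\pm1\}$ this expression collapses to~$\pm$ an integer (in the uncolored case one gets~$\pm 8$ or~$0$ depending on how many of the four sign-products agree, and in the colored case one gets~$\pm 8\ii$ or~$0$). So the second step is to show that the critical-point equations, taken over all edges simultaneously, force every frozen variable to be~$\pm1$: this should follow because each variable~$a$ that is a ``$b$-type'' variable for one matched edge is an ``$x$-type'' variable (or its neighbor) for an adjacent matched edge, and the coupled system of equations~\eqref{equation:critical-point-uncolored}–\eqref{equation:critical-point-colored} over the graph has, up to the torus action, only solutions in roots of unity. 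Third, with all frozen variables in~$\{\pm1\}$, the total critical value is~$\sum_{v\in V}\varepsilon_v$ where each~$\varepsilon_v\in\{8,-8,0\}$ in the uncolored case (resp. lies in~$\{8\ii,-8\ii,0\}$ near the colored vertex); counting how many vertices can contribute~$+8$ versus~$-8$ versus~$0$ as the sign pattern varies, and using~$\#V=2g-2$, produces exactly the real values~$8g-8-16k$ for~$k=0,\ldots,g-1$ and the imaginary values~$(8g-16-16k)\ii$ for~$k=0,\ldots,g-2$, with~$0$ appearing in the real list when~$g$ is odd and in the imaginary list when~$g$ is even. It remains only to check that each such sign pattern is actually realized by an honest critical point (not merely a formal solution), which one does by exhibiting, for each~$k$, an explicit assignment of~$\pm1$'s to the frozen variables consistent with all the edge equations.

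The main obstacle I expect is the second step: controlling the \emph{global} solution set of the coupled polynomial system and showing that, modulo the torus symmetry, the frozen variables are pinned to~$\pm1$ (rather than to some genuinely higher-dimensional family of solutions, which would a priori give a continuum of critical values). The local equation at one edge has a two-parameter family of solutions in~$(a,b,c,d)$ for generic~$x$; it is only the compatibility across \emph{all} matched edges — together with the equations coming from the non-matched edges, which I have suppressed above — that should rigidify the system. Making this rigidity argument precise, perhaps by an induction on the number of edges or by choosing the graph (e.g.\ the necklace graph of \cref{figure:necklace-graph}) so that the edge equations decouple into a manageable recursion, is where the real work lies; the sign-counting in the third step and the realizability in the final step are then essentially bookkeeping. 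Note this proposition only claims the listed values \emph{are} critical values (an inclusion), so for the proposition itself the realizability step is the essential one and the rigidity step is needed only for the matching ``no other values'' direction, which is presumably completed in the proof of \cref{theorem:critical-loci} together with the dimension count of the critical loci.
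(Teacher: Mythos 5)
Your overall route is the paper's route: fix a bridgeless trivalent graph with a perfect matching~$P$, write the potential as a sum of edge potentials \eqref{equation:sum-of-edge-potentials-uncolored}--\eqref{equation:sum-of-edge-potentials-colored}, and produce the listed values from explicit critical points whose coordinates are roots of unity, with the value indexed by the number~$k$ of matched edges carrying a minus sign. But the paper's \emph{entire} proof is the step you defer in your last sentence: one sets~$x_e=\pm1$ on the matched edges and~$1$ on all other edges (resp.~$\pm\ii$ and~$\ii$), checks directly that \eqref{equation:critical-point-uncolored} and \eqref{equation:critical-point-colored} hold at every edge, and evaluates. That check is genuinely immediate --- with all variables in~$\{\pm1\}$ either~$abcd=-1$, in which case both sides of \eqref{equation:critical-point-uncolored} vanish, or~$abcd=1$, in which case both sides equal~$4xab$ because~$x=x^{-1}$; the colored equation and the~$\pm\ii$ case (where~$x^{-1}=-x$) are analogous --- so it should be carried out rather than left as ``it remains to check'', since it is the whole content of the proposition. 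Your intermediate steps involving the quadratic for~$x_e$ and square roots are a detour that is never needed.

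Two further concrete problems. First, the sign-counting in your third step does not close as written: a vertex potential evaluated at~$\pm1$ equals~$4$ times the product of its three edge variables, so each vertex contributes~$\pm4$, not~$\pm8$; summing~$\pm8$ over the~$2g-2$ vertices would give values~$16(g-1)-16j$, which is not the claimed list. The correct bookkeeping is per matched edge: with the unmatched edges set to~$1$, the two endpoints of~$e\in P$ contribute equal signs, so each matched edge contributes~$\pm8$ and there are~$g-1$ of them, giving~$8(g-1)-16k$. Second, the ``rigidity'' step you identify as the main obstacle is not only unnecessary for this inclusion --- it is not proven anywhere in the paper either: immediately after this proposition the paper states explicitly that it has \emph{not} shown there are no other critical values and only conjectures this, and \cref{theorem:critical-loci} computes the dimensions of the critical loci over the listed values for the necklace graph rather than excluding further values. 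So you should not lean on that converse direction being ``presumably completed'' elsewhere.
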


\begin{proof}
  If we assign~$\pm1$ to the variables associated to the edges in the perfect matching, then it is immediate that assigning~$1$ to all variables associated to edges outside the perfect matching gives a solution to the equations~\eqref{equation:critical-point-uncolored},~\eqref{equation:critical-point-colored}. If we choose~$k$ edges from~$P\subseteq E$ to have value~$-1$, evaluating~\eqref{equation:sum-of-edge-potentials-uncolored},~\eqref{equation:sum-of-edge-potentials-colored} gives the value~$8g-8-16k$.

  If we assign~$\pm\ii$ to the variables associated to the edges in the perfect matching, then it is immediate that assigning~$\ii$ to all variables associated to edges outside the perfect matching gives a solution to the equations~\eqref{equation:critical-point-uncolored},~\eqref{equation:critical-point-colored}. If we choose~$k$ edges from~$P\subseteq E$ to have value~$-\ii$, evaluating~\eqref{equation:sum-of-edge-potentials-uncolored},~\eqref{equation:sum-of-edge-potentials-colored} gives the value~$(8g-16-16k)\ii$.
\end{proof}


\paragraph{Conifold points}
In \cite[Remark~3.1.6]{MR3536989} a conjecture regarding the value of~$T$ from \cref{corollary:property-O} was suggested,
relating it to critical values of the mirror Landau--Ginzburg model.

If~$W$ is a Laurent polynomial with positive coefficients
such that the origin is contained in the Newton polytope (which is the convex hull of the exponents of monomials with nonzero coefficients),
then it is shown in \cite{1404.7388v1} that there is a \emph{unique} critical point with strictly positive (real) coordinates.
This critical point is called the \emph{conifold point}~$x_{\mathrm{con}}$\index{x@$x_{\mathrm{con}}$}.
We will determine this for the graph potentials.

Define~$T_{\mathrm{con}}:=\widetilde{W}_{\graph,c}(x_{\mathrm{con}})$\index{T@$T_{\mathrm{con}}$} as the value of the potential~$\widetilde{W}_{\graph,c}$ at the conifold point. The conjecture says that
\begin{equation}
  T=T_{\mathrm{con}},
\end{equation}
From the description of the conifold point in \cref{proposition:conifold-point} we can easily check this equality. Indeed, the graph potential~$\widetilde{W}_{\graph,c}$ is the sum over the vertex potentials~$\widetilde{W}_v$ for~$v\in V$, and the evaluation at the conifold point for each of these is equal to~4. There are~$2(g-1)$ vertices, so~$T_{\mathrm{con}}=8(g-1)$. This agrees with the value~$T$ from \cref{corollary:property-O}.

\begin{proposition}
  \label{proposition:conifold-point}
  Let~$\graph$ be a trivalent graph, and let~$c$ be a coloring.
  The conifold point of the graph potential~$\widetilde{W}_{\graph,c}$
  is given by~$(1,\ldots,1) \in (\bC^\times)^{3g-3}$,
  and the value of the graph potential at the conifold point is~$8g-8$.
\end{proposition}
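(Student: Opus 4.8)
The plan is to deduce this from the uniqueness of the conifold point proved in \cite{1404.7388v1}: for a Laurent polynomial with positive coefficients whose Newton polytope contains the origin in its interior, there is exactly one critical point with strictly positive real coordinates, and that point is the conifold point. So it suffices to check that $\widetilde{W}_{\graph,c}$ meets these two hypotheses and that $(1,\ldots,1)\in(\bC^\times)^{3g-3}$ is a critical point; the value $\widetilde{W}_{\graph,c}(1,\ldots,1)=8g-8$ has already been computed in the paragraph preceding the statement ($4$ per vertex, and $\#V=2g-2$ vertices).

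First I would verify that $(1,\ldots,1)$ is a critical point, arguing vertex by vertex via $\widetilde{W}_{\graph,c}=\sum_{v\in V}\widetilde{W}_{v,c(v)}$ and the fact that each edge variable occurs only in the vertex potentials of its (at most two) endpoints. For a vertex $v$ with half-edge variables $x_i,x_j,x_k$ one has $x_i\tfrac{\partial}{\partial x_i}\widetilde{W}_{v,c(v)}=\sum_{s_i+s_j+s_k=c(v)}(-1)^{s_i}x_i^{(-1)^{s_i}}x_j^{(-1)^{s_j}}x_k^{(-1)^{s_k}}$, which at $(1,\ldots,1)$ evaluates to $\sum_{s}(-1)^{s_i}$; since exactly two of the four admissible sign vectors have $s_i=0$ and two have $s_i=1$, this sum vanishes. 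The same bookkeeping handles a vertex carrying a loop, where two of the half-edges belong to the same edge. Summing the contributions of the vertices incident to a given edge then shows that every logarithmic derivative of $\widetilde{W}_{\graph,c}$ vanishes at $(1,\ldots,1)$.

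Next I would check the two hypotheses of \cite{1404.7388v1}. Positivity of the coefficients is immediate from \cref{construction:graph-potential}. For the Newton polytope I would show that no nonzero linear functional $\ell$ can be nonnegative on every exponent vector of $\widetilde{W}_{\graph,c}$, which is equivalent to $0$ lying in the interior of a full-dimensional $\Newton(\widetilde{W}_{\graph,c})$: for a vertex with three distinct incident edges the four exponent vectors of $\widetilde{W}_{v,c(v)}$ sum to zero and span the corresponding coordinate $3$-plane, so any $\ell$ nonnegative on them must vanish on that $3$-plane; for a vertex carrying a loop the analogous two-variable check (the exponents being $(\pm 2,1)$ and $(0,-1)$, or their negatives in the coloured case) again forces $\ell$ to vanish on those coordinates. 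Since every edge is incident to some vertex, $\ell=0$. With both hypotheses in place, \cite{1404.7388v1} identifies $(1,\ldots,1)$ as the unique critical point with positive real coordinates, hence as $x_{\mathrm{con}}$, and evaluating there gives $8g-8$; in particular this matches the value of $T$ from \cref{corollary:property-O}.

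The main obstacle is the uniform treatment of the Newton-polytope hypothesis over the different local shapes a trivalent vertex can take, in particular vertices supporting a loop (as in the dumbbell graph), where the naive "the exponents of each vertex potential average to zero" statement fails and must be replaced by the small direct argument above. Once that case distinction is settled, both the critical-point computation and the final evaluation are routine.
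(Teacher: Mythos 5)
Your proof is correct and follows essentially the same route as the paper's: the paper's proof is just the direct vertex-by-vertex computation that $(1,\ldots,1)$ is critical and that each vertex potential evaluates to $4$ there, giving $4\#V=8g-8$. The extra verification you supply — that the hypotheses of \cite{1404.7388v1} actually hold for graph potentials, including the Newton-polytope check at a vertex carrying a loop — is left implicit in the paper, so your write-up is if anything more complete.
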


\begin{proof}
  This follows from direct computation:
  for every vertex $v$ we have $\dd W_{v,c}(1,1,1) = 0$ and $W_{v,c} (1,1,1) = 4$,
  hence $\dd W_{\graph,c} = \sum_{v\in V} \dd W_v = 0$ and thus
  \begin{equation}
    W_{\graph,c} (1,\ldots,1) = \sum_{v\in V} W_{v,c} (1,1,1) = 4\#V.
  \end{equation}
\end{proof}

As discussed in the introduction of this section,
the eigenvalues of quantum multiplication should correspond to the critical values of a suitable Landau--Ginzburg model.
There is a priori no reason why it would suffice to consider a single Laurent polynomial,
which is the restriction of the potential to a Zariski-open torus inside~$Y$.

But it turns out that graph potentials already see all the critical values one expects.
This is immediate by comparing \cref{proposition:munoz} to \cref{proposition:critical-values}.
It should be remarked that we have not proven that there are no other critical values,
but we conjecture that these are all.

One necessary feature of Landau--Ginzburg mirrors in homological mirror symmetry for Fano varieties
is that the critical locus of the potential is compact and that there are no critical points at infinity.
Because the critical locus contains higher-dimensional components
we see the need for multiple cluster charts,
as a positive-dimensional proper variety needs multiple affine charts
(so in this case at least~$g$, and hence this grows to $\infty$ as the genus grows).

\begin{remark}
  By \cref{proposition:munoz} we see that the~$\HH^{\bullet}(\Sym^{g-1-k}C)$
  appears as an eigenspace of the quantum multiplication with eigenvalue of absolute value~$8k$.
  On the other hand,
  by \cref{section:critical} we see that the critical loci all have expected dimension
  (for the necklace graph).
  This is in agreement with the appearance of~$\dbcoh{\Sym^{g-1-k}C}$
  in the conjectural semiorthogonal decomposition of~$\dbcoh{\odd}$.
  Further agreement between the two decompositions is provided by the discussion in \cref{section:critical}.
\end{remark}

\section{Decomposition in the Grothendieck ring of varieties}
\label{section:kzero}
In this section we compute the class of the moduli space~$\odd$ in the Grothendieck ring of varieties~$\Kzero(\Var/k)$\index{K@$\Kzero(\Var/k)$}, and the main result in this section is a proof of \cref{theorem:kzero}. This identity gives further evidence for \cref{conjecture:ur-sod}, by considering the appropriate motivic measure on~$\Kzero(\Var/k)$ as in \cref{corollary:kzero-categories}.

\subsection{The Grothendieck ring of varieties and motivic measures}
\label{subsection:kzero-and-measures}
Recall that the Grothendieck ring of varieties is generated by the isomorphism classes~$[X]$ of algebraic varieties over~$k$, modulo the relations~$[X]=[U]+[Z]$ for~$Z\hookrightarrow X$ a closed subvariety, and~$U=X\setminus Z$ its complement. The product of varieties induces the multiplicative structure, such that~$[\pt]$ is the unit. An important element of this ring is the class of the affine line~$\LL=[\bA^1]=[\bP^1]-[\pt]$\index{L@$\LL$}, also called the Lefschetz class.

An alternative presentation for this ring in characteristic~0, relevant to our goal, is the Bittner presentation from \cite{MR2059227}. It says that~$\Kzero(\Var/k)$ is isomorphic to the ring generated by isomorphism classes~$[X]$ of smooth and proper algebraic varieties over~$k$, modulo the relations~$[\Bl_ZX]-[E]=[X]-[Z]$ for~$Z\hookrightarrow X$ a smooth closed subvariety, and~$E\to Z$ the exceptional divisor in the blowup~$\Bl_ZX\to X$.

Via the cut-and-paste relations one can obtain the standard identities
\begin{equation}
  \label{equation:K_0-fibration}
  [X]=[F][Y]
\end{equation}
if~$X\to Y$ is a Zariski-locally trivial fibration with fibers~$F$, and
\begin{equation}
  [\bP^n]=\sum_{i=0}^n\LL^i=\frac{1-\LL^{n+1}}{1-\LL}.
\end{equation}

In order to use this alternative presentation, and because we depend on other results in the literature which are only phrased for algebraically closed fields (but likely hold more generally), we will work over an arbitrary algebraically closed field~$k$ of characteristic~0.

\paragraph{Motivic measures}
A motivic measure is a ring morphism whose domain is the Grothen\-dieck ring of varieties. In our setting we consider the motivic measure
\begin{equation}
  \mu:\Kzero(\Var/k)\to\Kzero(\dgCat/k)
\end{equation}
from \cite[\S8]{MR2051435}, obtained by using the Bittner presentation and Orlov's blowup formula, and sending~$[X]$ to the class of the (unique) dg enhancement of~$\Kzero(\dgCat/k)$. Then the identity~\eqref{equation:class-in-K_0} matches up with \cref{conjecture:ur-sod}.

The Grothendieck ring of dg~categories~$\Kzero(\dgCat_k)$\index{K@$\Kzero(\dgCat_k)$} precisely encodes semiorthogonal decompositions, as it is generated by the quasiequivalence classes~$[\cC]$ of smooth and proper pretriangulated dg~categories, modulo the relations
\begin{equation}
  [\cC]=[\cA]+[\cB]
\end{equation}
for every semiorthogonal decomposition~$\cC=\langle\cA,\cB\rangle$.

Hence the image of the equality~\eqref{equation:class-in-K_0} is consistent with \cref{conjecture:ur-sod} (up to~2-torsion, as~$\LL\mapsto 1$), as explained by the following corollary.
\begin{corollary}
  \label{corollary:kzero-categories}
  We have the equality
  \begin{equation}
    \label{equation:class-in-dg-K_0}
    [\dbcoh{\odd}]=[\dbcoh{\Sym^{g-1}C}] + \sum_{i=0}^{g-2}2[\dbcoh{\Sym^iC}] + T'
  \end{equation}
  in~$\Kzero(\dgCat/k)$, for some class~$T'$ such that~$2\cdot T'=0$.
\end{corollary}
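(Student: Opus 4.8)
The plan is to obtain \eqref{equation:class-in-dg-K_0} simply by applying the motivic measure $\mu\colon\Kzero(\Var/k)\to\Kzero(\dgCat/k)$ recalled above to the identity \eqref{equation:class-in-K_0} of \cref{theorem:kzero}. The one computation that has to be done first is the value of $\mu$ on the Lefschetz class. Writing $\LL=[\bP^1]-[\pt]$ and using the full exceptional collection $\dbcoh{\bP^1}=\langle\cO_{\bP^1},\cO_{\bP^1}(1)\rangle$, which gives $[\dbcoh{\bP^1}]=2[\dbcoh{\pt}]$ in $\Kzero(\dgCat/k)$, I would conclude that $\mu(\LL)=2[\dbcoh{\pt}]-[\dbcoh{\pt}]=[\dbcoh{\pt}]$, i.e.\ $\mu(\LL)$ is the multiplicative unit. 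Since $\odd$ and all the $\Sym^iC$ are smooth and projective, $\mu$ sends their classes to the classes of their (unique) dg enhancements, namely $[\dbcoh{\odd}]$ and $[\dbcoh{\Sym^iC}]$.

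With this in hand the remaining step is purely formal: because $\mu$ is a ring homomorphism, $\mu(\LL^{g-1}[\Sym^{g-1}C])=[\dbcoh{\Sym^{g-1}C}]$ and $\mu\bigl((\LL^i+\LL^{3g-3-2i})[\Sym^iC]\bigr)=2[\dbcoh{\Sym^iC}]$ for each $i=0,\dots,g-2$, so setting $T':=\mu(T)$ reproduces \eqref{equation:class-in-dg-K_0} term by term. Applying $\mu$ to the relation $(1+\LL)\cdot T=0$ and substituting $\mu(\LL)=1$ then yields $2\cdot T'=0$, which is the asserted torsion condition.

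I do not expect a genuine obstacle here: the only substantive input is the existence of the motivic measure $\mu$ of \cite[\S8]{MR2051435}, whose construction relies on the Bittner presentation \cite{MR2059227}, Orlov's blowup formula, and the uniqueness of dg enhancements, all available in the literature. The sole point to be careful about is that the error term $T$ of \cref{theorem:kzero} need not be effective, but since $\mu$ is defined on all of $\Kzero(\Var/k)$ (not just on effective classes) this is harmless, and the bookkeeping of $\LL$-powers and of the $2$-torsion term is routine.
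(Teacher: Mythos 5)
Your proposal is correct and is essentially identical to the paper's argument: the corollary is obtained by applying the motivic measure $\mu\colon\Kzero(\Var/k)\to\Kzero(\dgCat/k)$ of \cite[\S8]{MR2051435} to the identity \eqref{equation:class-in-K_0}, using that $\mu(\LL)=[\dbcoh{\pt}]$ is the unit (so that $1+\LL\mapsto 2$ turns the relation $(1+\LL)\cdot T=0$ into $2\cdot T'=0$). Your explicit verification of $\mu(\LL)=1$ via the exceptional collection on $\bP^1$ is exactly the point the paper summarizes as \enquote{$\LL\mapsto 1$}.
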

Again we expect that~$T'=0$, but our method of proof is not strong enough to remove this error term.

This is precisely the equality induced by the semiorthogonal decomposition from \cref{conjecture:ur-sod}, and therefore can be seen as further evidence for it.

\paragraph{Comparison to other results}
Motivic and cohomological invariants of~$\odd$ have been an active topic of interest for a long time, and many tools are used for this.
From the identity~\eqref{equation:class-in-K_0} it is possible to deduce known results on certain invariants of~$\odd$ by taking the appropriate motivic measures. Examples of this are given by
\begin{itemize}
  \item the Betti polynomial (with values in~$\bZ[t]$) \cite{MR0232015},
  \item the Hodge--Poincaré polynomial (with values in~$\bZ[x,y]$) \cite[Corollary~5.1]{MR1817504}.
\end{itemize}
It would be interesting to weaken the assumptions on the field~$k$ for \eqref{equation:class-in-K_0}, to also recover point-counting realizations \cite[Corollary~5.4]{MR1817504}.

With a view towards providing evidence for \cref{conjecture:ur-sod}, Lee has given in \cite[Theorem~1.2]{1806.11101} an isomorphism similar to~\eqref{equation:class-in-K_0} which holds in any \emph{semisimple} category of motives, such as the category of numerical motives. It is based on the isomorphism \cite[Theorem~2.7]{MR1895918} due to del Ba\~no, which also require this semisimplicity.

This result is stronger in the sense that it is known that the error term~$T$ vanishes. But it only works for (the Grothendieck group of) a semisimple category of motives. The motivic measure
\begin{equation}
  \mu\colon\Kzero(\Var/k)\to\Kzero(\Chow/k)
\end{equation}
which sends the class~$[X]$ of a smooth projective variety to the class~$[\mathfrak{h}(X)]$ of its Chow motive, allows one to obtain a strengthening (modulo vanishing of~$T$) of \cite[Theorem~1.2]{1806.11101}. See also \cite[Remark~2.8]{MR1895918}.

\subsection{The Thaddeus picture}
We break the proof of \cref{theorem:kzero} up into several steps. The main idea of the proof is to use Thaddeus' variation of GIT for moduli of stable pairs and compare the class of the variety when we cross a wall using a telescopic sum. This is the ``Thaddeus picture'' alluded to in the title of this section, and will be given in~\eqref{equation:thaddeus-picture}.

\paragraph{Setup}
Let~$d$ be an odd integer which is greater than~$2g-2$. Let~$\cL$ be a line bundle of degree $d$ and let $M^d_0=\bP(\HH^1(C,\cL^\vee))$. For each~$i \in \{1,\dots, (d-1)/2\}$, Thaddeus constructs in \cite{MR1273268} two smooth projective varieties~$M^d_i$ and~$\widetilde{M}^d_i$ such that~$\widetilde{M}_i^d$ is:
\begin{itemize}
  \item a blowup of~$M_{i-1}^d$ along a projective bundle~~$\bP(\cW_{i}^{-})$ over~$\Sym^iC$.
  \item a blowup of~$M_i^d$ along a projective bundle~$\bP(\cW_{i}^{+})$ over~$\Sym^iC$;
\end{itemize}
Here~$\cW_i^{-}$ (respectively~$\cW_i^{+}$) is vector bundle on~$\Sym^iC$ of rank~$i$ (respectively of rank $d+g-2i-1$).

This leads us to the following flip diagram:
\begin{equation}
  \label{equation:thaddeus-flip}
  \begin{tikzcd}[column sep=small, row sep=small]
    & & E \arrow[d, hook] \arrow[rrdd] \arrow[lldd] \\
    & & \widetilde{M}_i^d \arrow[ld] \arrow[rd] \\
    \bP(\cW_i^-) \arrow[r, hook] \arrow[rrdd] & M_{i-1}^d & & M_i^d & \bP(\cW_i^+) \arrow[l, hook'] \arrow[lldd] \\
    \\
    & & \Sym^iC
  \end{tikzcd}
\end{equation}

Summarizing the situation from~\cite{MR1273268} for all~$i$ we obtain the following theorem.
\begin{theorem}[Thaddeus]
  \label{theorem:thaddeus-picture}
  With the above notation:
  \begin{enumerate}
    \item There is a flip~$M^d_{i-1} \dashrightarrow M^d_{i}$ with center~$\Sym^iC$ and type~$(i,d+g-2i-1)$ for each~$i \in \{1,\dots, (d-1)/2\}$.
    \item We have that~$M_0^d\cong\bP^{d+g-2}$, and~$\widetilde{M}_1\cong M_1^d$.
    \item There is a natural map
      \begin{equation}
        \pi: M^d_{(d-1)/2}\rightarrow \odd
      \end{equation}
      with fiber~$\bP(\HH^0(C,\cE))$ over a stable bundle~$\cE$ in~$\odd$. Moreover, if~$d\geq 4g-3$, then~$\pi$ is a projective bundle associated to a vector bundle of rank~$d+2(1-g)$.
  \end{enumerate}
\end{theorem}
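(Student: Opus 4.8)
The final statement collects results of Thaddeus \cite{MR1273268} on moduli of stable pairs, so the plan is to recall his variation-of-GIT picture and read off the three assertions, supplying in addition the short cohomological argument needed for the last sentence of~(3). First I would fix the moduli problem: a \emph{pair} is $(\cE,\phi)$ with $\cE$ a rank~2 bundle on $C$, $\det\cE\cong\cL$ of degree $d$, and $\phi\in\HH^0(C,\cE)$ nonzero, and for a real parameter $\sigma>0$ one imposes a slope inequality on line subbundles $L\subset\cE$ whose bound depends on whether or not $\phi$ factors through $L$. I would then invoke Thaddeus' construction of the GIT quotient $M^d_\sigma$ of $\sigma$-stable pairs, its smoothness and projectivity, its constancy as $\sigma$ moves within a chamber, and the fact that the chambers are separated by finitely many walls indexed by $i\in\{1,\dots,(d-1)/2\}$; write $M^d_i$ for the space attached to the $i$-th chamber.

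The core input is the local analysis at a wall. Crossing the $i$-th wall, the sublocus of pairs whose stability changes is, on the $M^d_{i-1}$ side and on the $M^d_i$ side respectively, a projective bundle $\bP(\cW_i^-)$ and $\bP(\cW_i^+)$ over $\Sym^iC$ (the point of $\Sym^iC$ being the divisor of $\phi$ inside the sub-line-bundle it generates), and one simultaneously blows up $\bP(\cW_i^-)$ in $M^d_{i-1}$ and $\bP(\cW_i^+)$ in $M^d_i$ to obtain a common smooth model $\widetilde{M}^d_i$ with a single exceptional divisor $E$ fibred in products $\bP^{i-1}\times\bP^{d+g-2i-2}$ over $\Sym^iC$; this is exactly diagram~\eqref{equation:thaddeus-flip}, and it exhibits $M^d_{i-1}\dashrightarrow M^d_i$ as a flip with centre $\Sym^iC$ and type $(i,d+g-2i-1)$, the two numbers being the ranks of $\cW_i^\pm$ that Thaddeus extracts from the normal-bundle structure of the flipping loci. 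This is what gives~(1). For~(2): in the first chamber $\sigma$-stability forces $\phi$ to generate the trivial sub-line-bundle and the quotient to be $\cL$, so a stable pair is a nonsplit extension $0\to\cO_C\to\cE\to\cL\to 0$ taken up to scalar on the extension class, whence $M^d_0\cong\bP(\HH^1(C,\cL^\vee))$; since $d>2g-2$ one has $\HH^0(C,\cL^\vee)=0$ and $\dim\HH^1(C,\cL^\vee)=d+g-1$ by Riemann--Roch, so $M^d_0\cong\bP^{d+g-2}$, and $\widetilde{M}^d_1\cong M^d_1$ because $\bP(\cW_1^+)$ already has codimension one in $M^d_1$.

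For~(3): in the last chamber $\sigma$-stability of $(\cE,\phi)$ becomes equivalent to stability of the underlying bundle $\cE$ (oddness of $d$ is convenient here, ruling out strictly semistable $\cE$), so forgetting $\phi$ gives a morphism $\pi\colon M^d_{(d-1)/2}\to\odd$ with fibre over a stable $\cE$ the space of nonzero sections up to scalar, i.e.\ $\bP(\HH^0(C,\cE))$. To finish, assume $d\geq 4g-3$. For $\cE$ stable with $\det\cE\cong\cL$, Serre duality gives $\HH^1(C,\cE)^\vee\cong\HH^0(C,\cE^\vee\otimes\omega_C)$; the bundle $\cE^\vee\otimes\omega_C$ is again stable, of slope $2g-2-d/2$, which is negative since $d>4g-4$, so it has no nonzero section and $\HH^1(C,\cE)=0$, whence $\dim\HH^0(C,\cE)=\chi(\cE)=d+2(1-g)$ independently of $\cE$. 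Cohomology and base change then makes $(p_{\odd})_*\cW$ locally free of rank $d+2(1-g)$ for $\cW$ a universal bundle on $C\times\odd$ and $p_{\odd}$ the projection, and $\pi$ is its projectivisation.

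The step I expect to be the genuine obstacle — and the reason one cites \cite{MR1273268} rather than reproving it from scratch — is the variation-of-GIT analysis of the pairs moduli problem itself: setting up the GIT quotients, proving they are smooth and projective, locating the walls, and identifying the flipping loci together with their normal bundles, which is what simultaneously pins down the centres $\Sym^iC$ and the flip types $(i,d+g-2i-1)$. Everything else is bookkeeping, the extension-theoretic identification of the two endpoints, and the one-line Riemann--Roch / Serre-duality vanishing used to obtain the projective bundle in~(3).
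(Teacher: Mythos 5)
Your proposal is correct and takes essentially the same route as the paper, which offers no proof at all but simply summarizes the results of Thaddeus's variation-of-GIT analysis for stable pairs with a citation to \cite{MR1273268}. The extra details you supply — the Riemann--Roch computation identifying $M_0^d\cong\bP^{d+g-2}$, the codimension-one observation giving $\widetilde{M}_1^d\cong M_1^d$, and the Serre-duality vanishing of $\HH^1(C,\cE)$ for $d\geq 4g-3$ yielding the constant-rank projective bundle — are all accurate and consistent with the ranks of $\cW_i^\pm$ as stated in the paper.
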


This yields the following picture, which we refer to as the ``Thaddeus picture'':
\begin{equation}
  \label{equation:thaddeus-picture}
  \begin{tikzcd}[column sep=small, row sep=small]
    & \widetilde{M}_1^d \arrow[ld] \arrow[rd, equals] & & \widetilde{M}_2^d \arrow[ld] \arrow[rd] & & & & \widetilde{M}_{(d-1)/2}^d \arrow[ld] \arrow[rd] \\
    M_0^d & & M_1^d & & M_2^d & \ldots & M_{(d-3)/2}^d & & M_{(d-1)/2}^d \arrow[d, "\pi"] \\
    & & & & & & & & \mathclap{\odd}
  \end{tikzcd}
\end{equation}
All morphisms in the diagram, except~$\pi$, are blowups.

To minimize the dimension of the moduli spaces and the number of steps involved in the construction, we will take~$d=4g-3$. Then
\begin{enumerate}
  \item $\dim M_i=5g-5$;
  \item we are considering moduli spaces~$M_0^{4g-3},\ldots,M_{2g-2}^{4g-3}$;
  \item the morphism~$M_{2g-2}$ is a~$\bP^{2g-2}$\dash fibration.
\end{enumerate}

\paragraph{Setup for the proof}
We now give some easy lemmas, as a setup for the proof in \cref{subsection:K_0-decomposition}. The following lemma follows directly from \cref{theorem:thaddeus-picture} and~\eqref{equation:K_0-fibration}.
\begin{lemma}
  \label{lemma:class-comparison}
  Let~$d$ be an odd integer greater than~$4g-4$, then we have that
  \begin{equation}
    [M_{(d-1)/2}^d]=\frac{1-\LL^{d+2(1-g)}}{1-\LL}[\odd]
  \end{equation}
  in~$\Kzero(\Var/k)$.
\end{lemma}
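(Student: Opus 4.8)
The plan is to deduce this directly from part~(3) of \cref{theorem:thaddeus-picture} together with the fibration relation~\eqref{equation:K_0-fibration}. The first step is a small parity observation: since~$d$ is odd while~$4g-4$ is even, the hypothesis~$d>4g-4$ in fact gives~$d\geq 4g-3$, which is precisely the range in which \cref{theorem:thaddeus-picture}(3) asserts that
\[
  \pi\colon M^d_{(d-1)/2}\to\odd
\]
is a projective bundle associated to a vector bundle of rank~$d+2(1-g)$.

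Next I would use that the projectivization~$\bP(\cE)$ of a vector bundle~$\cE$ of rank~$r$ is a Zariski-locally trivial fibration with fiber~$\bP^{r-1}$, as it trivializes over any open subset on which~$\cE$ is free. Hence~$\pi$ is Zariski-locally trivial with fiber~$\bP^{d+2(1-g)-1}$, and~\eqref{equation:K_0-fibration} gives
\[
  [M^d_{(d-1)/2}]=[\bP^{d+2(1-g)-1}]\cdot[\odd]
\]
in~$\Kzero(\Var/k)$. Substituting the standard formula~$[\bP^n]=(1-\LL^{n+1})/(1-\LL)$ with~$n=d+2(1-g)-1$, so that~$n+1=d+2(1-g)$, then produces the asserted identity.

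I do not expect any real obstacle here: all of the geometric input is packaged in \cref{theorem:thaddeus-picture}, which is taken from Thaddeus~\cite{MR1273268}, and the remainder is the elementary behaviour of the class of a projective bundle in~$\Kzero(\Var/k)$. The only point that deserves to be spelled out is the parity argument ensuring~$d\geq 4g-3$, so that~$\pi$ is genuinely a projective bundle and the fiber dimension~$d+2(1-g)-1$ is constant.
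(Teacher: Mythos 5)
Your proposal is correct and is exactly the argument the paper intends: the lemma is stated there as following directly from \cref{theorem:thaddeus-picture}(3) and the fibration relation~\eqref{equation:K_0-fibration}, which is precisely what you carry out. The parity remark that oddness of~$d$ upgrades~$d>4g-4$ to~$d\geq 4g-3$ is a worthwhile detail to make explicit.
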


Using the description from \cref{lemma:class-comparison} for~$d=4g-3$ and~$d=4g-1$ and computing the difference gives the following description. One could take a less optimal choice, at the cost of obtaining a larger coefficient in \cref{theorem:kzero}.
\begin{lemma}
  \label{lemma:difference-comparison}
  We have that
  \begin{equation}
    (1+\LL)[\odd]=[M^{4g-1}_{2g-1}]-\LL^2[M^{4g-3}_{2g-2}]
  \end{equation}
  in~$\Kzero(\Var/k)$.
\end{lemma}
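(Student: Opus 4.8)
The plan is to invoke \cref{lemma:class-comparison} for the two smallest odd integers exceeding~$4g-4$, namely~$d=4g-3$ and~$d=4g-1$, and then take the linear combination in which the coefficients of~$[\odd]$ telescope down to~$1+\LL$.

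First I would record the two instances. For~$d=4g-3$ one has~$(d-1)/2=2g-2$ and~$d+2(1-g)=2g-1$, so \cref{lemma:class-comparison} gives
\begin{equation*}
  [M^{4g-3}_{2g-2}]=\frac{1-\LL^{2g-1}}{1-\LL}[\odd]=\Big(\sum_{i=0}^{2g-2}\LL^i\Big)[\odd].
\end{equation*}
For~$d=4g-1$ one has~$(d-1)/2=2g-1$ and~$d+2(1-g)=2g+1$, so in the same way
\begin{equation*}
  [M^{4g-1}_{2g-1}]=\frac{1-\LL^{2g+1}}{1-\LL}[\odd]=\Big(\sum_{i=0}^{2g}\LL^i\Big)[\odd].
\end{equation*}
(Both~$d=4g-3$ and~$d=4g-1$ are odd and~$>4g-4$, and also~$\geq 4g-3$, so the hypotheses of \cref{lemma:class-comparison}, hence of \cref{theorem:thaddeus-picture}, are met.)

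Then I would subtract~$\LL^2$ times the first identity from the second:
\begin{equation*}
  [M^{4g-1}_{2g-1}]-\LL^2[M^{4g-3}_{2g-2}]
  =\Big(\sum_{i=0}^{2g}\LL^i-\sum_{i=2}^{2g}\LL^i\Big)[\odd]
  =(1+\LL)[\odd],
\end{equation*}
which is the assertion. It is worth emphasising that the factors produced by \cref{lemma:class-comparison} are genuine polynomials~$1+\LL+\dots+\LL^{N-1}$ in~$\LL$, so the whole manipulation takes place inside~$\Kzero(\Var/k)$ and at no point requires inverting the zero divisor~$1-\LL$.

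There is essentially no obstacle in the lemma itself: all the geometric content is already packaged into \cref{theorem:thaddeus-picture} (hence into Thaddeus' wall\dash crossing construction) and into the Zariski\dash locally trivial fibration identity~\eqref{equation:K_0-fibration} used to derive \cref{lemma:class-comparison}. The only points needing a moment's care are the bookkeeping of the exponents~$(d-1)/2$ and~$d+2(1-g)$ for the two chosen values of~$d$, and expanding the two geometric series before subtracting so that the telescoping is manifest.
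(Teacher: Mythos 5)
Your proposal is correct and is exactly the argument the paper intends: it derives the identity by specialising \cref{lemma:class-comparison} to $d=4g-3$ and $d=4g-1$ and subtracting $\LL^2$ times the first instance from the second, with the correct bookkeeping $(d-1)/2=2g-2,2g-1$ and $d+2(1-g)=2g-1,2g+1$. Your remark that the fibration factors are honest polynomials in $\LL$, so no inversion of $1-\LL$ is needed, is a sensible precaution that the paper leaves implicit.
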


For every flip diagram as in~\eqref{equation:thaddeus-picture} one can prove the following, where only standard bookkeeping techniques are required.
\begin{proposition}
  \label{prop:d=4g-3}
  Let~$d$ be an odd integer greater than~$4g-4$. Then for~$i=1,\ldots,(d-1)/2$ the difference of the classes~$[M^d_{i}]$ and~$[M^d_{i-1}]$ satisfies
  \begin{equation}
    [M^d_{i}]-[M^d_{i-1}]=\frac{\LL}{(1-\LL)^2}\bigg((1-\LL^{d+g-2i-2})(1-\LL^i)-(1-\LL^{i-1})(1-\LL^{d+g-2i-1})\bigg)[\Sym^iC]
  \end{equation}
  in~$\Kzero(\Var/k)$.
\end{proposition}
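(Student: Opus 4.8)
The plan is to read the identity off directly from the two blowup descriptions of~$\widetilde{M}_i^d$ recorded in \cref{theorem:thaddeus-picture} and the flip diagram \eqref{equation:thaddeus-flip}, combined with the cut-and-paste relation and the projective bundle formula \eqref{equation:K_0-fibration}. Throughout, the ``fractions'' in~$\LL$ are formal shorthand for genuine polynomials with integer coefficients (e.g.~$\frac{1-\LL^{n+1}}{1-\LL}=[\bP^n]=1+\LL+\dots+\LL^n$), so no localisation of~$\Kzero(\Var/k)$ is involved; the assertion is the resulting polynomial identity after multiplication by $[\Sym^iC]$.

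First I would record the blowup formula in~$\Kzero(\Var/k)$: if $Z\hookrightarrow X$ is a smooth closed subvariety of codimension~$c$ with exceptional divisor $E=\bP(N_{Z/X})$ in $\Bl_ZX$, then $\Bl_ZX\setminus E\cong X\setminus Z$ gives $[\Bl_ZX]=[X]-[Z]+[E]$, and as $E\to Z$ is a Zariski-locally trivial $\bP^{c-1}$-bundle, $[E]=\frac{1-\LL^c}{1-\LL}[Z]$, hence $[\Bl_ZX]=[X]+\frac{\LL(1-\LL^{c-1})}{1-\LL}[Z]$. By \cref{theorem:thaddeus-picture}, $\widetilde{M}_i^d$ is simultaneously the blowup of $M_{i-1}^d$ along $\bP(\cW_i^-)$ and the blowup of $M_i^d$ along $\bP(\cW_i^+)$, with a common exceptional divisor~$E$, which is the fibre product $\bP(\cW_i^-)\times_{\Sym^iC}\bP(\cW_i^+)$ over $\Sym^iC$. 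Reading off codimensions from the two projections of~$E$: the fibre of $E\to\bP(\cW_i^-)$ is the projective space of $\cW_i^+$ (rank $d+g-2i-1$), so $\bP(\cW_i^-)$ has codimension $d+g-2i-1$ in $M_{i-1}^d$; likewise the fibre of $E\to\bP(\cW_i^+)$ is the projective space of $\cW_i^-$ (rank~$i$), so $\bP(\cW_i^+)$ has codimension~$i$ in $M_i^d$.

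Next I would substitute $[\bP(\cW_i^-)]=\frac{1-\LL^i}{1-\LL}[\Sym^iC]$ and $[\bP(\cW_i^+)]=\frac{1-\LL^{d+g-2i-1}}{1-\LL}[\Sym^iC]$ (again \eqref{equation:K_0-fibration}) into the two expressions for $[\widetilde{M}_i^d]$:
\begin{equation*}
  [M_{i-1}^d]+\frac{\LL(1-\LL^{d+g-2i-2})}{1-\LL}[\bP(\cW_i^-)]
  =[\widetilde{M}_i^d]=
  [M_i^d]+\frac{\LL(1-\LL^{i-1})}{1-\LL}[\bP(\cW_i^+)],
\end{equation*}
and solve for $[M_i^d]-[M_{i-1}^d]$, which produces exactly the stated formula. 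Equivalently and more transparently, equating the two cut-and-paste identities $[\widetilde{M}_i^d]=[M_{i-1}^d]-[\bP(\cW_i^-)]+[E]=[M_i^d]-[\bP(\cW_i^+)]+[E]$ makes $[E]$ cancel, giving $[M_i^d]-[M_{i-1}^d]=[\bP(\cW_i^+)]-[\bP(\cW_i^-)]=\frac{\LL^i-\LL^{d+g-2i-1}}{1-\LL}[\Sym^iC]$, which one then matches to the right-hand side of the proposition using the elementary identity $(1-\LL^{d+g-2i-2})(1-\LL^i)-(1-\LL^{i-1})(1-\LL^{d+g-2i-1})=(1-\LL)(\LL^{i-1}-\LL^{d+g-2i-2})$.

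I do not expect a genuine obstacle. The only points needing care are the identification of the common exceptional divisor with $\bP(\cW_i^-)\times_{\Sym^iC}\bP(\cW_i^+)$ and the attendant codimension count (both contained in Thaddeus's construction \cite{MR1273268}), and checking that $1\le i\le(d-1)/2$ with $d>4g-4$ keeps the exponents appearing in the formal fractions nonnegative so that these really are polynomials. This is the ``standard bookkeeping'' alluded to in the statement.
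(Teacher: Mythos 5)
Your proposal is correct and follows essentially the same route as the paper's sketch: the two blowup descriptions of $\widetilde{M}_i^d$ from Thaddeus's flip, combined with the cut-and-paste and projective bundle formulas, with the codimensions $d+g-2i-1$ and $i$ read off correctly. Your observation that $[E]$ cancels outright, reducing the right-hand side to $[\bP(\cW_i^+)]-[\bP(\cW_i^-)]=\frac{\LL^i-\LL^{d+g-2i-1}}{1-\LL}[\Sym^iC]$, is a small but genuine streamlining of the paper's bookkeeping (and in fact flags a harmless exponent typo in one intermediate line of the paper's displayed computation).
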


\begin{proof}[Sketch of proof]
  By applying the blowup formula and the projective bundle formula twice with the appropriate codimension and relative dimension we get the equalities
  \begin{equation}
    \begin{aligned}
      [\widetilde{M}_i^d]&=[M_i^d]-[\bP(\cW_i^+)]+\frac{1-\LL^i}{1-\LL}[\bP(\cW_i^+)] \\
      &=[M_i^d]+\frac{\LL(1-\LL^{i-1})}{1-\LL}\frac{1-\LL^{d+g-2i-2}}{1-\LL}[\Sym^iC] \\
      [\widetilde{M}_i^d]&=[M_{i-1}^d]-[\bP(\cW_i^-)]+\frac{1-\LL^{d+g-2i-1}}{1-\LL}[\bP(\cW_i^-)] \\
      &=[M_{i-1}^d]+\frac{\LL(1-\LL^{d+g-2i-2})}{1-\LL}\frac{1-\LL^i}{1-\LL}[\Sym^iC]
    \end{aligned}
  \end{equation}
  and subtracting them gives the result.
\end{proof}

Finally, let us recall the following fundamental identity encoding the behavior of Abel--Jacobi morphisms, as e.g.~discussed in \cite[\S3]{MR2377891}.


\begin{proposition}
  \label{proposition:fundamental}
  Let~$C$ be a curve of genus~$g$. Let~$e\geq 0$ and denote~$a=(g-1)+e$. Then we have an equality
  \begin{equation}
    [\Pic^a C][\bP^{e-1}]=[\Sym^{g-1+e}C] - \LL^e[\Sym^{g-1-e}C]
  \end{equation}
  in~$\Kzero(\Var/k)$.
\end{proposition}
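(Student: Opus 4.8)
The plan is to prove the identity by analyzing the Abel--Jacobi map $\mathrm{aj}\colon \Sym^{g-1+e}C \to \Pic^{g-1+e}C$ and showing that it is, after removing a suitable closed subvariety, a Zariski-locally trivial projective bundle, while the removed piece contributes the correction term $\LL^e[\Sym^{g-1-e}C]$. First I would recall that for $a = (g-1)+e$ with $e \geq 0$, and for a line bundle $\xi$ of degree $a$, the fibre $\mathrm{aj}^{-1}(\xi)$ is the complete linear system $\bP(\HH^0(C,\xi))$, which by Riemann--Roch has dimension $h^0(C,\xi) - 1 = e + h^1(C,\xi)$. Thus the generic fibre (where $h^1 = 0$) is $\bP^{e-1}$ when we are in the "expected" range, but jumps up over the locus of special line bundles.

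The key step is to stratify $\Pic^a C$ by the value of $h^1(C,\xi) = h^0(C, \omega_C \otimes \xi^\vee)$. Let $Z \subseteq \Pic^a C$ be the locus where $h^1 \geq 1$, i.e.\ the image of the Abel--Jacobi map $\Sym^{g-1-e}C \to \Pic^{-(g-1-e)}C \xrightarrow{\,\otimes\,\omega_C\,} \Pic^{a-?}$ — more precisely, $\xi$ has $h^1 \geq 1$ iff $\omega_C \otimes \xi^\vee$ is effective, i.e.\ iff $\omega_C \otimes \xi^\vee \in \mathrm{image}(\Sym^{g-1-e}C)$. Over the open complement $U = \Pic^a C \setminus Z$ the map $\mathrm{aj}$ restricts to a Zariski-locally trivial $\bP^{e-1}$-bundle, so by \eqref{equation:K_0-fibration} the preimage $\mathrm{aj}^{-1}(U)$ has class $[\bP^{e-1}][U] = [\bP^{e-1}]([\Pic^a C] - [Z])$. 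For the preimage of $Z$ I would use the standard fact (e.g.\ as in \cite[\S3]{MR2377891}) that $\mathrm{aj}^{-1}(Z)$, the locus of effective divisors $D$ of degree $g-1+e$ with $h^0 \geq e+1$, fibres over $\Sym^{g-1-e}C$: sending $D$ with $D = (\text{fixed part}) + (\text{moving part})$ — concretely, the residuation $D \mapsto$ "the divisor $D'$ of degree $g-1-e$ with $D + D' \sim \omega_C$" — realizes $\mathrm{aj}^{-1}(Z)$ as a $\bP^{e-1+?}$-bundle; the cleanest bookkeeping instead compares the two ways of computing $[\Sym^{g-1+e}C]$ via residuation directly.

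Actually the slickest route avoids stratifying and instead uses the involution/residuation correspondence in its cut-and-paste form. Serre duality gives a canonical isomorphism $\Pic^a C \cong \Pic^{2g-2-a}C = \Pic^{g-1-e}C$ via $\xi \mapsto \omega_C \otimes \xi^\vee$, under which $Z$ corresponds to the image $W_{g-1-e}$ of $\Sym^{g-1-e}C$ in $\Pic^{g-1-e}C$, and the Abel--Jacobi map $\Sym^{g-1-e}C \to W_{g-1-e}$ is birational (in fact an isomorphism onto its image for $g-1-e \leq$ suitable bound, but in any case one checks it is an isomorphism in $\Kzero$ over the relevant strata by a further induction, OR one observes $e \geq 0$ forces $g - 1 - e \leq g-1$ and uses that $\Sym^n C \to W_n$ is birational hence $[\Sym^{g-1-e}C] = [W_{g-1-e}] + (\text{lower, canceling})$; in the case $g-1-e < 0$ the term $[\Sym^{g-1-e}C]$ is zero by convention and $Z = \emptyset$, recovering the pure projective bundle statement). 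Combining: $[\Sym^{g-1+e}C] = [\bP^{e-1}]([\Pic^a C] - [Z]) + [\mathrm{aj}^{-1}(Z)]$, and a parallel computation of $[\mathrm{aj}^{-1}(Z)]$ via the residual pencil structure yields $[\mathrm{aj}^{-1}(Z)] = [\bP^{e-1}][Z] + \LL^e[\Sym^{g-1-e}C]$ — the $\LL^e$ arising because over each point of $W_{g-1-e}$ the fibre jumps from $\bP^{e-1}$ to $\bP^{e}$ (wait: to dimension $e + h^1$, and generically on $Z$, $h^1 = 1$) so the jump in class is exactly $[\bP^{e}] - [\bP^{e-1}] = \LL^e$, times $[Z] = [\Sym^{g-1-e}C]$ up to birational corrections that cancel. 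Cancelling $[\bP^{e-1}][Z]$ from both sides leaves precisely $[\Sym^{g-1+e}C] = [\bP^{e-1}][\Pic^a C] + \LL^e[\Sym^{g-1-e}C]$, which rearranges to the claimed identity.

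The main obstacle will be handling the deeper jumping loci cleanly: $h^1(C,\xi)$ can be $\geq 2$ on a sublocus of $Z$, so $Z$ itself must be stratified and the fibre dimension analyzed stratum by stratum, and one must verify that all these higher-order contributions telescope correctly — equivalently, that the Abel--Jacobi maps $\Sym^m C \to W_m \subseteq \Pic^m C$ contribute their $\Kzero$-classes in a way compatible with the formula. The standard resolution, which I would follow, is to prove the identity by induction on $e$ (the $e = 0$ case being the trivial $[\Sym^{g-1}C] = [\Pic^{g-1}C] + [\Sym^{g-1}C] - [\Pic^{g-1}C]$ after interpreting $[\bP^{-1}] = 0$, or rather to start from $e=1$ where $\mathrm{aj}$ is birational onto $\Pic^g C$ away from a locus isomorphic to $\Sym^{g-2}C$), reducing the jumping-locus analysis at level $e$ to the already-known identity at level $e' < e$ via residuation; this is exactly the "bookkeeping" referred to in \cite[\S3]{MR2377891}, so I would cite that source for the detailed stratification argument rather than reproduce it.
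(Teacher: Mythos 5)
The paper does not actually prove this proposition: it is \emph{recalled} as a known identity with a pointer to \cite[\S 3]{MR2377891}, so your attempt has to be judged on its own merits. Your overall strategy --- fibre the Abel--Jacobi map $\mathrm{aj}\colon\Sym^{g-1+e}C\to\Pic^{a}C$, identify the jumping locus via Serre duality with data on $\Pic^{g-1-e}C$, and telescope --- is the right one, but the step that makes the identity come out is exactly the one you leave open. The two-stratum version you write down is not correct as stated: $Z=\{h^1\geq 1\}$ is the Brill--Noether locus $W_{g-1-e}$, and $[Z]\neq[\Sym^{g-1-e}C]$ in $\Kzero(\Var/k)$; the difference is carried by the deeper strata $\{h^1\geq 2\}$, which is precisely what the asserted formula $[\mathrm{aj}^{-1}(Z)]=[\bP^{e-1}][Z]+\LL^e[\Sym^{g-1-e}C]$ sweeps under the rug. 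Your proposed repair, an induction on $e$, is not set up: residuation relates degree $g-1+e$ to degree $g-1-e$, not to the degrees $g-1\pm e'$ occurring at a smaller value $e'<e$, so it is unclear what inductive hypothesis would ever be invoked.

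The gap closes without induction once you stratify all the way down and notice that the \emph{same} strata compute both symmetric powers. Let $S_j\subseteq\Pic^aC$ be the locally closed stratum where $h^1(\xi)=j$. Over $S_j$ the Abel--Jacobi fibre is $\bP(\HH^0(C,\xi))=\bP^{e+j-1}$, and these families are Zariski-locally trivial (they are projectivizations of vector bundles, by Grauert applied over each stratum where $h^0$ is constant, using a Poincar\'e bundle, which exists as $k$ is algebraically closed); hence $[\Sym^{g-1+e}C]=\sum_{j\geq0}[\bP^{e+j-1}][S_j]$. Serre duality $\xi\mapsto\omega_C\otimes\xi^\vee$ identifies $S_j$ with the stratum $\{h^0=j\}$ of $\Pic^{g-1-e}C$, whence by the same token $[\Sym^{g-1-e}C]=\sum_{j\geq1}[\bP^{j-1}][S_j]$. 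Since $[\bP^{e+j-1}]-\LL^e[\bP^{j-1}]=[\bP^{e-1}]$ for every $j\geq0$ (with the convention $[\bP^{-1}]=0$), subtracting gives
\begin{equation*}
  [\Sym^{g-1+e}C]-\LL^e[\Sym^{g-1-e}C]
  =\sum_{j\geq0}\bigl([\bP^{e+j-1}]-\LL^e[\bP^{j-1}]\bigr)[S_j]
  =[\bP^{e-1}]\,[\Pic^aC],
\end{equation*}
which is the claim. No analysis of how $\Sym^{g-1-e}C$ surjects onto $W_{g-1-e}$, and no birational corrections, are needed: the point is that the difference of projective-space classes is independent of $j$, so all higher strata cancel identically rather than ``up to corrections.''
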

Observe that under our assumptions we have isomorphisms~$\Pic^iC\cong\Jac C$ for all~$i\in\bZ$, by the existence of a rational point.

\subsection{Proof of the decomposition}
\label{subsection:K_0-decomposition}
We will restrict ourselves to the cases where~$d=4g-3$ and~$4g-1$, i.e.~the first two degrees for which the morphism~$\pi$ in~\eqref{equation:thaddeus-picture} is an equidimensional projective fibration. We will compare different stages of the Thaddeus picture, using the following notation for the difference of classes of moduli of stable pairs, where~$i=0,\ldots,2g-2$.
\begin{equation}
  \delta M_i:=[M_{i}^{4g-1}]-\LL^2[M_i^{4g-3}].
\end{equation}
For notational convenience, we also set~$\delta M^{}_{-1}:=0$

Now for~$i=0,\ldots,2g-2$, we define
\begin{equation}
  X_i:=\delta M_i-\delta M_{i-1}.
\end{equation}

We can describe the classes~$X_i$ in the following way.
\begin{proposition}
  \label{proposition:middle-equation}
  For~$i=0,\ldots,2g-2$ we have that
  \begin{equation}
    X_i=\LL^i(1+\LL)[\Sym^{i}C]
  \end{equation}
\end{proposition}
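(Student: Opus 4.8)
The plan is to compute $X_i = \delta M_i - \delta M_{i-1}$ directly from the formula in \cref{prop:d=4g-3} applied at the two degrees $d = 4g-3$ and $d = 4g-1$, and then to simplify the resulting rational expression in $\LL$ using the fundamental Abel--Jacobi identity of \cref{proposition:fundamental}. Since $\delta M_i - \delta M_{i-1} = \big([M_i^{4g-1}]-[M_{i-1}^{4g-1}]\big) - \LL^2\big([M_i^{4g-3}]-[M_{i-1}^{4g-3}]\big)$, the quantities on the right are exactly the differences controlled by \cref{prop:d=4g-3}. I would substitute $d = 4g-1$ (so $d+g-2i-1 = 5g-2i-2$, $d+g-2i-2 = 5g-2i-3$) into the first difference and $d = 4g-3$ (so $d+g-2i-1 = 5g-2i-4$, $d+g-2i-2 = 5g-2i-5$) into the second, multiply the latter by $\LL^2$, and subtract.

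The next step is a pure algebra computation: both differences carry the common factor $\frac{\LL}{(1-\LL)^2}[\Sym^iC]$, so after factoring this out I am left with a polynomial identity in $\LL$ alone. Expanding
\[
  (1-\LL^{5g-2i-3})(1-\LL^i) - (1-\LL^{i-1})(1-\LL^{5g-2i-2})
\]
for the $d=4g-1$ term and subtracting $\LL^2$ times the analogous expression with the exponents shifted down by $2$, most monomials should telescope, leaving something of the shape $(1-\LL)^2 \cdot \LL^{i-1}(1+\LL) \cdot(\text{a factor coming from the genus-dependent exponents})$. At this point the $\frac{\LL}{(1-\LL)^2}$ prefactor cancels the $(1-\LL)^2$, and I expect to be left with $\LL^i(1+\LL)$ times a combination of the form $[\Sym^iC]$ minus a $\LL$-power times $[\Sym^{\text{something}}C]$ — precisely the left-hand side of \cref{proposition:fundamental} after recognizing $[\Pic^aC][\bP^{e-1}]$. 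Applying that proposition (with the appropriate $e$, noting $\Pic^a C \cong \Jac C$) collapses the two symmetric-product classes into a single projective-space-weighted term, and bookkeeping of the $\LL$-powers should produce exactly $\LL^i(1+\LL)[\Sym^iC]$.

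The main obstacle I anticipate is not conceptual but organizational: keeping the four binomial products (two degrees $\times$ two values of $i$, one of them $\LL^2$-shifted) aligned correctly so that the cancellation is transparent, and correctly matching the leftover exponents to the hypotheses of \cref{proposition:fundamental} — in particular identifying the right value of $e$ and checking that the range $i = 0,\dots,2g-2$ keeps all the symmetric powers and the integer $e$ in the range where that identity is valid (including the boundary cases $i=0$ and the transition near $i = g-1$, where $\Sym^{g-1-e}C$ degenerates). A secondary subtlety is the edge case $i=0$, where $\delta M_{-1} = 0$ by convention and $M_0^d \cong \bP^{d+g-2}$, which should be checked by hand to confirm $X_0 = (1+\LL)[\pt]$ matches the general formula. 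Once the indexing is pinned down, the rest is routine manipulation in $\Kzero(\Var/k)$.
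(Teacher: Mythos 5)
Your core plan---substitute the formula of \cref{prop:d=4g-3} at the two degrees $d=4g-1$ and $d=4g-3$, multiply the latter by $\LL^2$, and subtract---is exactly the paper's proof, and it works. The one place where your anticipation goes astray is the expected appeal to \cref{proposition:fundamental}: it is not needed here. The formula of \cref{prop:d=4g-3} involves only the single class $[\Sym^iC]$ for both values of $d$, so all four binomial products share $\frac{\LL}{(1-\LL)^2}[\Sym^iC]$ as a common factor and the simplification is a pure polynomial identity in $\LL$. Concretely, grouping by the factors $(1-\LL^i)$ and $(1-\LL^{i-1})$ one finds
\begin{equation*}
  (1-\LL^{5g-3-2i})-\LL^2(1-\LL^{5g-5-2i})=1-\LL^2,
  \qquad
  (1-\LL^{5g-2-2i})-\LL^2(1-\LL^{5g-4-2i})=1-\LL^2,
\end{equation*}
so the difference collapses to $\frac{\LL(1-\LL^2)}{(1-\LL)^2}(\LL^{i-1}-\LL^i)[\Sym^iC]=\LL^i(1+\LL)[\Sym^iC]$, with no second symmetric power ever appearing. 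The Abel--Jacobi identity only enters later, in \cref{proposition:main-recursion}, to rewrite $X_{2g-2-i}$ in terms of $[\Sym^iC]$. Your flagging of the boundary case $i=0$ (where $\delta M_{-1}=0$ and one uses $M_0^d\cong\bP^{d+g-2}$ directly) is a legitimate check that the paper leaves implicit, and it does confirm $X_0=(1+\LL)[\pt]$.
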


\begin{proof}
  The proof follows from the definition of~$X_i$ and by applying \cref{prop:d=4g-3}, via the following chain of equalities.
  \begin{equation}
    \begin{aligned}
      X_{i}
      &=\delta M_{i}-\delta M_{i-1} \\
      &=[M^{4g-1}_{i}]-\LL^2[M^{4g-3}_{i}]-[M^{4g-1}_{i-1}]+\LL^2[M^{4g-3}_{i-1}] \\
      &=([M^{4g-1}_{i}]-[M^{4g-1}_{i-1}])-\LL^2([M^{4g-3}_{i}]-[M^{4g-3}_{i-1}]) \\
    \end{aligned}
  \end{equation}
  We now use \cref{prop:d=4g-3} and rewrite $X_{i}$ as
  \begin{equation}
    \begin{aligned}
      X_{i}
      &=\frac{\LL}{(1-\LL)^2}\bigg( \left( (1-\LL^{5g-3-2i})(1-\LL^{i})-(1-\LL^{5g-2-2i})(1-\LL^{i-1}) \right) \\
      &\qquad -\LL^2\left( (1-\LL^{5g-5-2i})(1-\LL^{i})-(1-\LL^{5g-4-2i})(1-\LL^{i-1})\right) \bigg)[\Sym^{i}C] \\
      &=\bigg(\frac{\LL(1-\LL^{i})(1-\LL^2)}{(1-\LL)^2}-\frac{\LL(1-\LL^2)(1-\LL^{i-1})}{(1-\LL)^2}\bigg)[\Sym^{i}C]\\
      &=\bigg(\frac{\LL(1-\LL^2)}{(1-\LL)^2}(\LL^{i-1}-\LL^i)\bigg)[\Sym^{i}C]\\
      &=\LL^{i}(1+\LL)[\Sym^{i}C].
    \end{aligned}
  \end{equation}
\end{proof}

Using \cref{proposition:middle-equation,proposition:fundamental} we obtain the following result, where we use the following polynomial in~$\LL$:
\begin{equation}
  \cP(i):=\LL^{2g-2-i}(1+\LL)(1+\LL+\ldots+\LL^{g-2-i}).
\end{equation}

\begin{proposition}
  \label{proposition:main-recursion}
  For~$i=0,\ldots,g-2$, we have that
  \begin{equation}
    X_i+X_{2g-2-i}=[\Sym^iC](\LL^i+\LL^{3g-3-2i})(1+\LL)+\cP(i)[\Jac(C)],
  \end{equation}
\end{proposition}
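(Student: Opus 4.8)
The plan is a direct computation that combines the closed formula for $X_i$ from \cref{proposition:middle-equation} with the Abel--Jacobi identity of \cref{proposition:fundamental}; the only real content is an index substitution, and I expect no substantive obstacle beyond checking the range of validity at the endpoints.

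First I would record that, by \cref{proposition:middle-equation}, $X_i = \LL^i(1+\LL)[\Sym^i C]$ and likewise $X_{2g-2-i} = \LL^{2g-2-i}(1+\LL)[\Sym^{2g-2-i}C]$. So the task reduces to rewriting the class of the ``high'' symmetric power $[\Sym^{2g-2-i}C]$ in terms of the ``low'' one $[\Sym^i C]$ and the Jacobian $[\Jac C]$.

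Next I would apply \cref{proposition:fundamental} with $e := g-1-i$; this is legitimate since $0 \le i \le g-2$ forces $e \ge 1 \ge 0$ and $e-1 = g-2-i \ge 0$. With this choice $a = (g-1)+e = 2g-2-i$ and $g-1-e = i$, so, using $\Pic^a C \cong \Jac C$, the identity becomes
\[
  [\Jac C]\,[\bP^{g-2-i}] = [\Sym^{2g-2-i}C] - \LL^{g-1-i}[\Sym^i C].
\]
Solving for $[\Sym^{2g-2-i}C]$ and substituting $[\bP^{g-2-i}] = 1 + \LL + \cdots + \LL^{g-2-i}$ yields
\[
  [\Sym^{2g-2-i}C] = \LL^{g-1-i}[\Sym^i C] + (1 + \LL + \cdots + \LL^{g-2-i})[\Jac C].
\]

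Finally I would multiply through by $\LL^{2g-2-i}(1+\LL)$ to obtain $X_{2g-2-i}$: since $2g-2-i + (g-1-i) = 3g-3-2i$, the first term becomes $\LL^{3g-3-2i}(1+\LL)[\Sym^i C]$, and the second becomes precisely $\cP(i)[\Jac C]$ by the definition of $\cP(i)$. Adding back $X_i = \LL^i(1+\LL)[\Sym^i C]$ gives
\[
  X_i + X_{2g-2-i} = (\LL^i + \LL^{3g-3-2i})(1+\LL)[\Sym^i C] + \cP(i)[\Jac C],
\]
which is the assertion. The proof is pure bookkeeping; the one point worth verifying is that \cref{proposition:fundamental} is applied in range at the boundary cases $i = g-2$ (where $e = 1$ and $[\bP^0] = [\pt]$, so the identity degenerates correctly) and $i = 0$ (where it expresses $[\Sym^{2g-2}C]$ through $[\Sym^0 C] = [\pt]$ and $[\Jac C]$).
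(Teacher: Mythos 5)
Your proposal is correct and is essentially identical to the paper's own argument: both invoke \cref{proposition:middle-equation} for the closed form of $X_i$ and $X_{2g-2-i}$, apply \cref{proposition:fundamental} with $e=g-1-i$ to rewrite $[\Sym^{2g-2-i}C]$ in terms of $[\Sym^iC]$ and $[\Jac C]$, and then match the Jacobian coefficient with the definition of $\cP(i)$. Your extra check of the boundary cases $i=0$ and $i=g-2$ is sound but not needed beyond what the paper does.
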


\begin{proof}
  By \cref{proposition:fundamental} we get
  \begin{equation}
    \label{equation:kapranov-strong}
    [\Sym^{2g-2-i}C]=\LL^{g-i-1}[\Sym^iC]+[\bP^{g-2-i}][\Jac(C)]
  \end{equation}
  We apply \cref{proposition:middle-equation} for $i\in \{0,1,\dots, g-2\}$ to~\eqref{equation:kapranov-strong} to get
  \begin{equation}
    \begin{aligned}
      X_{2g-2-i}
      &=\LL^{2g-2-i}(1+\LL)[\Sym^{2g-2-i}C] \\
      &=\LL^{3g-3-2i}(1+\LL)[\Sym^iC]+\LL^{2g-2-i}(1+\LL)[\bP^{g-2-i}][\Jac(C)] \\
      &=\LL^{3g-3-2i}(1+\LL)[\Sym^iC]+\cP(i)[\Jac(C)]
    \end{aligned}
  \end{equation}
  Now by \cref{proposition:middle-equation} for~$i\leq g-1$, we get~$X_i=\LL^i(1+\LL)[\Sym^iC].$ Thus the proof follows.
\end{proof}

The following proposition is an important step in the proof of \cref{theorem:kzero}. It shows that there are no contributions of the Jacobian of~$C$ to the class of~$\odd$.
\begin{proposition}
  \label{proposition:polynomial-vanishing}
  We have that
  \begin{equation}
    \bigg(\sum_{i=0}^{g-2}\cP(i)\bigg)[\Jac(C)]=[M_{2g-2}^{4g-1}]-[M_{2g-1}^{4g-1}]
  \end{equation}
  in~$\Kzero(\Var/k)$.
\end{proposition}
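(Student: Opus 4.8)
The plan is to evaluate both sides explicitly as scalar multiples of $[\Jac(C)]$, using the tools already assembled, and to observe that the two scalars (polynomials in $\LL$) coincide.

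First I would compute the right-hand side. The passage from $M_{2g-2}^{4g-1}$ to $M_{2g-1}^{4g-1}$ is the last flip in the Thaddeus picture for $d=4g-1$, with center $\Sym^{2g-1}C$, so \cref{prop:d=4g-3} applies with $d=4g-1$ and $i=2g-1=(d-1)/2$. Substituting $d+g-2i-2=g-1$ and $d+g-2i-1=g$ gives
\begin{equation}
  [M_{2g-1}^{4g-1}]-[M_{2g-2}^{4g-1}]=\frac{\LL}{(1-\LL)^2}\Big((1-\LL^{g-1})(1-\LL^{2g-1})-(1-\LL^{2g-2})(1-\LL^{g})\Big)[\Sym^{2g-1}C].
\end{equation}
A short manipulation shows the bracketed polynomial factors as $-(1-\LL)\,\LL^{g-1}(1-\LL^{g-1})$. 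Next I would rewrite $[\Sym^{2g-1}C]$ in terms of the Jacobian: applying \cref{proposition:fundamental} with $e=g$ (so $a=2g-1$) and using $\Sym^{-1}C=\emptyset$ together with $\Pic^{2g-1}C\cong\Jac C$, one obtains $[\Sym^{2g-1}C]=[\bP^{g-1}][\Jac(C)]=\tfrac{1-\LL^g}{1-\LL}[\Jac(C)]$. Combining the two displays yields
\begin{equation}
  [M_{2g-2}^{4g-1}]-[M_{2g-1}^{4g-1}]=\frac{\LL^{g}(1-\LL^{g-1})(1-\LL^{g})}{(1-\LL)^2}\,[\Jac(C)].
\end{equation}

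Then I would compute the left-hand side, i.e.\ evaluate $\sum_{i=0}^{g-2}\cP(i)$ as a polynomial in $\LL$. Writing $\cP(i)=\LL^{2g-2-i}(1+\LL)\tfrac{1-\LL^{g-1-i}}{1-\LL}$ and reindexing by $j=g-2-i$ turns the sum into $\tfrac{\LL^{g}(1+\LL)}{1-\LL}\sum_{j=0}^{g-2}\LL^{j}(1-\LL^{j+1})$; the inner sum is a difference of two geometric series and equals $\tfrac{(1-\LL^{g-1})(1-\LL^{g})}{1-\LL^2}$, so $\sum_{i=0}^{g-2}\cP(i)=\tfrac{\LL^{g}(1-\LL^{g-1})(1-\LL^{g})}{(1-\LL)^2}$. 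This is precisely the coefficient of $[\Jac(C)]$ found above, which proves the proposition.

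The argument reduces to polynomial identities in $\LL$, so there is no substantial obstacle; the points that deserve care are the factorization $(1-\LL^{g-1})(1-\LL^{2g-1})-(1-\LL^{2g-2})(1-\LL^{g})=-(1-\LL)\LL^{g-1}(1-\LL^{g-1})$ and the geometric-series evaluation of the inner sum, both routine. One should also note that the divisions by powers of $1-\LL$ occurring above are harmless: exactly as in the proof of \cref{prop:d=4g-3}, each such expression is shorthand for an honest class (a projective-bundle class $[\bP^n]=\tfrac{1-\LL^{n+1}}{1-\LL}$), so every manipulation takes place inside $\Kzero(\Var/k)$.
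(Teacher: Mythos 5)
Your proof is correct and follows essentially the same route as the paper's: both sides are reduced to explicit multiples of $[\Jac(C)]$ using \cref{prop:d=4g-3} for the last flip with $d=4g-1$, $i=2g-1$, and \cref{proposition:fundamental} with $e=g$, and the two polynomial coefficients are checked to agree. Your factorization $(1-\LL^{g-1})(1-\LL^{2g-1})-(1-\LL^{2g-2})(1-\LL^{g})=-(1-\LL)\LL^{g-1}(1-\LL^{g-1})$ is the correct one (the paper's intermediate display has a harmless exponent typo, $\LL^{2g-1}$ for $\LL^{2g-2}$, which does not affect its conclusion).
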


\begin{proof}
  First let us simplify the left hand side of the above equation. We have that
  \begin{equation}
    \begin{aligned}
      \sum_{i=0}^{g-2}\cP(i)
      &=\frac{1+\LL}{1-\LL}\sum_{i=0}^{g-2}\LL^{2g-2-i}(1-\LL^{g-1-i}) \\
      &=\frac{(1+\LL)}{1-\LL}\sum_{i=0}^{g-2}\left( \LL^{2g-2-i}-\LL^{3g-3-2i} \right) \\
      &=\frac{\LL^{g}(1+\LL)}{1-\LL}\sum_{i=0}^{g-2}(\LL^{i}-\LL^{2i+1}) \\
      &=\frac{\LL^{g}(1+\LL)}{1-\LL}\left( \frac{1-\LL^{g-1}}{1-\LL}-\frac{\LL(1-(\LL^2)^{g-1})}{1-\LL^2} \right) \\
      &=\frac{\LL^g}{(1-\LL)^2}\left( (1-\LL^{g-1})(1-\LL^g) \right)
    \end{aligned}
  \end{equation}
  We will be done if we can show that the above expression multiplied by the class of~$\Jac(C)$ is equal to~$[M_{2g-2}^{4g-1}]-[M_{2g-1}^{4g-1}]$. First by \cref{proposition:fundamental} where we take~$e=g$ we get
  \begin{equation}
    [\Sym^{2g-1}C]=[\bP^{g-1}][\Jac(C)].
  \end{equation}
  Combining this with \cref{prop:d=4g-3} we get
  \begin{equation}
    \label{equation:ratz}
    [M^{4g-1}_{2g-1}]-[M^{4g-1}_{2g-2}]=\frac{L}{(1-\LL)^3}(1-\LL^{g})\bigg((1-\LL^{g-1})(1-\LL^{2g-1})-(1-\LL^g)(1-\LL^{2g-2})\bigg)[\Jac(C)].
  \end{equation}
  Now we use that
  \begin{equation}
    \label{equation:cakewasgood}
    (1-\LL^{g-1})(1-\LL^{2g-1})-(1-\LL^g)(1-\LL^{2g-2})=-\LL^{g-1}(1-\LL)+\LL^{2g-1}(1-\LL)
  \end{equation}
  Thus from~\eqref{equation:cakewasgood} we get
  \begin{equation}
    [M^{4g-1}_{2g-1}]-[M^{4g-1}_{2g-2}]=-\frac{\LL(1-\LL^g)}{(1-\LL)^3}\LL^{g-1}(1-\LL^{g-1})(1-\LL)[\Jac(C)],
  \end{equation}
  and we are done.
\end{proof}

Now we are ready to complete the proof of \cref{theorem:kzero}.

\begin{proof}[Proof of \cref{theorem:kzero}]
  First we write the class of~$\odd$ in terms of the difference of classes of the smooth projective varieties considered by Thaddeus. We apply \cref{lemma:difference-comparison} to get
  \begin{equation}
    \label{equation:moduli-diff}
    (1+\LL)[\odd]=[\delta M_{2g-2}]+[M_{2g-1}^{4g-1}]-[M_{2g-2}^{4g-1}]
  \end{equation}
  Now we can write
  \begin{equation}
    [\delta M_{2g-2}]=\sum_{i=0}^{2g-2}X_i,
  \end{equation}
  where $X_i$'s are as used in \cref{proposition:main-recursion}. Thus applying \cref{proposition:main-recursion} we get
  \begin{equation}
    \begin{aligned}
      &(1+\LL)[\odd] \\
      &\quad=[\delta M_{2g-2}^{4g-3}]+[M_{2g-1}^{4g-1}]-[M_{2g-2}^{4g-1}] \\
      &\quad= \sum_{i=0}^{2g-2}X_i+[M_{2g-1}^{4g-1}]-[M_{2g-2}^{4g-1}] \\
      &\quad= \sum_{i=0}^{g-2}(X_i+X_{2g-2-i})+X_{g-1}+[M_{2g-1}^{4g-1}]-[M_{2g-2}^{4g-1}] \\
      &\quad=\LL^{g-1}(1+\LL)[\Sym^{g-1}C]+\sum_{i=0}^{g-2}(\LL^i+\LL^{3g-3-2i})(1+\LL)[\Sym^iC] \\
      &\qquad+\bigg(\sum_{i=0}^{g-2}\cP(i)\bigg)[\Jac(C)]+[M_{2g-1}^{4g-1}]-[M_{2g-2}^{4g-1}] \\
      &\quad=\LL^{g-1}(1+\LL)[\Sym^{g-1}C]+\sum_{i=0}^{g-2}(\LL^i+\LL^{3g-3-2i})(1+\LL)[\Sym^iC]
    \end{aligned}
  \end{equation}
  where the last step is by \cref{proposition:polynomial-vanishing}.
\end{proof}

\subsection{Motivic zeta functions and a Harder-type formula}
\label{subsection:motivic-measure}
As an application of the results above we can give an analogue of Harder's point counting formula from \cite{MR0262246} (see also \cite[Corollary~2.11]{MR1895918}). We do this by exhibiting an identity in the Grothendieck ring of varieties. Note however that the ground field cannot be chosen to be~$\bF_q$, hence Harder's formula cannot be obtained by applying the motivic measure
\begin{equation}
\#:\Kzero(\Var/\bF_q)\to\bZ:[X]\mapsto\#X(\bF_q).
\end{equation}

For any variety~$X$, we denote the~$n$th symmetric~$\Sym^nX$ to be~$X^n/\Sym_n$, where~$\Sym_n$ is the symmetric group of~$n$ letters. All the symmetric powers can be put together to give Kapranov's \emph{motivic zeta function}, as introduced in \cite[\S1.3]{math/0001005}:
\begin{equation}
  \ZZKap(X,t):=\sum_{n\geq 0}[\Sym^nX]t^n \in \Kzero(\Var/k)[[t]].
\end{equation}
This is a universal version of the Hasse--Weil zeta function, valid for arbitrary ground fields, where the counting measure for~$k=\bF_q$ gives the usual Hasse--Weil zeta function.

The following theorem is due to Kapranov \cite[Theorem~1.1.9]{math/0001005}, and shows how the motivic zeta function has properties similar to the usual Hasse--Weil zeta function.
\begin{theorem}[Kapranov]
  \label{theorem:kapranov}
  Let~$C$ be a smooth curve. The motivic zeta function of~$C$ is a rational function of the following form:
  \begin{equation}
    \ZZKap(C,t)=\frac{F_{2g}(t)}{(1-t)(1-\LL t)},
  \end{equation}
  where~$F_{2g}(t)$ is a polynomial of degree~$2g$. Moreover~$\ZZKap(C,t)$ (resp.~$F_{2g}(t)$) satisfies the functional equation
  \begin{equation}
    \ZZKap(C,t)=\LL^{g-1}t^{2g-2}\ZZKap\left( C,\frac{1}{\LL t} \right)
  \end{equation}
  resp.
  \begin{equation}
    F_{2g}(t)=\LL^gt^{2g}F_{2g}\left( \frac{1}{\LL t} \right).
  \end{equation}
\end{theorem}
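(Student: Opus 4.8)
The plan is to reduce everything to \cref{proposition:fundamental}, which governs the symmetric powers $\Sym^nC$ with $n\geq g$. Write $s_n:=[\Sym^nC]\in\Kzero(\Var/k)$, with the convention $s_n=0$ for $n<0$, so that $\ZZKap(C,t)=\sum_{n\geq0}s_nt^n$. For $n\geq g$, applying \cref{proposition:fundamental} with $e=n-g+1\geq1$ (and using $\Pic^nC\cong\Jac C$, which holds since $k$ is algebraically closed) yields
\[
  s_n=\LL^{n-g+1}s_{2g-2-n}+[\Jac C]\,\frac{1-\LL^{n-g+1}}{1-\LL};
\]
for $n\geq2g-1$ the first term vanishes and this records the classical fact that $\Sym^nC\to\Jac C$ is a projective bundle of relative dimension $n-g$.

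First I would substitute this recursion into $\ZZKap(C,t)$, splitting the sum at $n=g$. In $\sum_{n\geq g}\LL^{n-g+1}s_{2g-2-n}t^n$ only terms with $0\leq2g-2-n$ contribute, and reindexing by $m=2g-2-n$ turns it into the finite sum $\sum_{m=0}^{g-2}\LL^{g-1-m}s_mt^{2g-2-m}$. The remaining contribution $\sum_{n\geq g}[\Jac C]\tfrac{1-\LL^{n-g+1}}{1-\LL}t^n$ is a difference of two geometric series, and using $\tfrac{t^g}{1-t}-\tfrac{\LL t^g}{1-\LL t}=\tfrac{(1-\LL)t^g}{(1-t)(1-\LL t)}$ it simplifies to $\tfrac{[\Jac C]t^g}{(1-t)(1-\LL t)}$; all of this is a genuine identity in $\Kzero(\Var/k)[[t]]$. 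Hence
\[
  \ZZKap(C,t)=A(t)+\frac{[\Jac C]\,t^g}{(1-t)(1-\LL t)},\qquad
  A(t):=\sum_{n=0}^{g-1}s_nt^n+\sum_{m=0}^{g-2}\LL^{g-1-m}s_mt^{2g-2-m},
\]
so $\ZZKap(C,t)=\frac{F_{2g}(t)}{(1-t)(1-\LL t)}$ with $F_{2g}(t):=A(t)(1-t)(1-\LL t)+[\Jac C]t^g$. As $A$ has degree $2g-2$ and leading coefficient $s_0\LL^{g-1}=\LL^{g-1}$, the polynomial $F_{2g}$ has degree $2g$ and leading coefficient $\LL^g$ (which is nonzero, e.g.\ by the Hodge--Deligne realisation), giving the rationality statement.

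For the functional equation I would note that the operation $Z(t)\mapsto\LL^{g-1}t^{2g-2}Z(1/(\LL t))$ is linear, so it is enough to see that both summands above are fixed by it. For the rational summand this is a one-line substitution using $(1-1/(\LL t))(1-1/t)=\frac{(1-\LL t)(1-t)}{\LL t^2}$. For $A(t)$, expanding $\LL^{g-1}t^{2g-2}A(1/(\LL t))$ shows that the part of the first sum of $A$ with $n\leq g-2$ and the second sum of $A$ swap roles: the total exponent $(g-1)+(g-1-m)-(2g-2-m)$ of $\LL$ picked up in this exchange collapses to $0$, which is precisely what makes the two expressions match, while the middle term $s_{g-1}t^{g-1}$ is fixed. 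Therefore $A(t)=\LL^{g-1}t^{2g-2}A(1/(\LL t))$, and the functional equation for $\ZZKap(C,t)$ follows; clearing denominators gives $F_{2g}(t)=\LL^gt^{2g}F_{2g}(1/(\LL t))$, equivalently $\LL^gf_k=\LL^kf_{2g-k}$ for the coefficients $f_k$ of $F_{2g}$.

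There is no genuine difficulty beyond bookkeeping once \cref{proposition:fundamental} is in hand; the only points worth a word of care are its boundary case $e\geq g$ (equivalently, the constancy of $h^0(L)$ on $\Pic^nC$ for $n\geq2g-1$) and the convention $\Sym^{<0}C=\emptyset$ used in the reindexing. Alternatively one could simply invoke Macdonald's closed formula for $\sum[\Sym^nC]t^n$, but deriving it from \cref{proposition:fundamental} keeps this part of the exposition self-contained.
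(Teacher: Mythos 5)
Your proof is correct. Note, however, that the paper does not prove this statement at all: it is quoted verbatim from Kapranov \cite[Theorem~1.1.9]{math/0001005}, so there is no internal argument to compare against. What you supply is essentially the standard derivation, and it is the natural one in the context of this paper, since you deduce everything from \cref{proposition:fundamental} — the same Abel--Jacobi identity the authors themselves invoke (also without proof, citing \cite[\S3]{MR2377891}) and manipulate in exactly this spirit in \cref{proposition:kapranov-reinterpreted}. The bookkeeping checks out: the recursion $s_n=\LL^{n-g+1}s_{2g-2-n}+[\Jac C]\tfrac{1-\LL^{n-g+1}}{1-\LL}$ for $n\geq g$ is the correct specialization $e=n-g+1$, the reindexed finite sum and the geometric-series tail are right (and the apparent division by $1-\LL$ is harmless, since $[\bP^{n-g}]=1+\LL+\cdots+\LL^{n-g}$ gives the identity $\sum_{j\geq0}[\bP^j]t^j=\tfrac{1}{(1-t)(1-\LL t)}$ directly in $\Kzero(\Var/k)[[t]]$), the leading coefficient $\LL^g$ is nonzero by any standard realization, and the exponent computation showing $A(t)$ is fixed by $Z(t)\mapsto\LL^{g-1}t^{2g-2}Z(1/(\LL t))$ is exact. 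You are also right to flag the two conventions being used silently — $[\Sym^{n}C]=0$ for $n<0$ and the case $e\geq g$ of \cref{proposition:fundamental} — since the paper itself relies on the latter (taking $e=g$ in the proof of \cref{proposition:polynomial-vanishing}). The one cosmetic caveat is that the functional equations, as identities of coefficients, naturally live in $\Kzero(\Var/k)[\LL^{-1}]$ before clearing denominators, but this is inherent in the statement as given and not a defect of your argument.
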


We can rearrange and reinterpret the terms in the motivic zeta function as follows.
\begin{proposition}
  \label{proposition:kapranov-reinterpreted}
  We have the identity
  \begin{equation}
    \label{equation:kapranov-reinterpreted}
    \begin{aligned}
      \ZZKap(C,\LL)
      &=\left( \sum_{i=0}^{g-2}[\Sym^iC](\LL^i+\LL^{3g-2i-3}) \right) +[\Sym^{g-1}C]\LL^{g-1} \\
      &\quad + \sum_{i=0}^{g-2}[\Jac(C)][\bP^{g-i-2}]\LL^{2g-2-i}+\frac{[\Jac(C)]\LL^{2g-1}}{1-\LL}\bigg(\frac{1}{1-\LL}-\frac{\LL^{g}}{(1-\LL^2)}\bigg)
    \end{aligned}
  \end{equation}
  in~$\Kzero(\Var/k)$.
\end{proposition}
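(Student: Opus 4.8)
The plan is to evaluate $\ZZKap(C,\LL)=\sum_{n\geq 0}[\Sym^nC]\LL^n$ term by term, using that $\Sym^nC$ becomes a projective bundle over the Jacobian for large $n$, and then to match the resulting four pieces with the four groups of terms on the right-hand side of \eqref{equation:kapranov-reinterpreted}. Throughout, one works in the localization of $\Kzero(\Var/k)$ at $(1-\LL)(1-\LL^2)$, where $\ZZKap(C,\LL)$ is a well-defined element by the rationality statement of \cref{theorem:kapranov}; concretely, recall that the Abel--Jacobi map $\Sym^nC\to\Pic^nC$ is a $\bP^{n-g}$-bundle for $n\geq 2g-1$, so there $[\Sym^nC]=[\Jac C][\bP^{n-g}]$ (using $\Pic^nC\cong\Jac C$ from the existence of a rational point), and the tail $\sum_{n\geq 2g-1}[\Sym^nC]\LL^n$ is an honest convergent geometric-type series.

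First I would split $\ZZKap(C,\LL)=\sum_{n=0}^{2g-2}[\Sym^nC]\LL^n+[\Jac C]\sum_{n\geq 2g-1}[\bP^{n-g}]\LL^n$. For the tail, substituting $[\bP^{n-g}]=(1-\LL^{n-g+1})/(1-\LL)$ and summing the two geometric series $\sum_{n\geq 2g-1}\LL^n=\LL^{2g-1}/(1-\LL)$ and $\sum_{n\geq 2g-1}\LL^{2n-g+1}=\LL^{3g-1}/(1-\LL^2)$ yields exactly $\tfrac{[\Jac C]\LL^{2g-1}}{1-\LL}\bigl(\tfrac{1}{1-\LL}-\tfrac{\LL^g}{1-\LL^2}\bigr)$, which is the last term of \eqref{equation:kapranov-reinterpreted}.

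Next, for the finite part I would split the range further into $0\leq n\leq g-1$ and $g\leq n\leq 2g-2$. On the latter range, writing $n=g-1+e$ with $1\leq e\leq g-1$, \cref{proposition:fundamental} gives $[\Sym^{g-1+e}C]=\LL^e[\Sym^{g-1-e}C]+[\Jac C][\bP^{e-1}]$. Re-indexing the first summand by $i=g-1-e$ turns $\sum_{e=1}^{g-1}\LL^{g-1+2e}[\Sym^{g-1-e}C]$ into $\sum_{i=0}^{g-2}\LL^{3g-3-2i}[\Sym^iC]$, and re-indexing the second summand by $i=g-2-(e-1)$ turns $[\Jac C]\sum_{e=1}^{g-1}[\bP^{e-1}]\LL^{g-1+e}$ into $[\Jac C]\sum_{i=0}^{g-2}[\bP^{g-i-2}]\LL^{2g-2-i}$, which is the third group of terms. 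Finally, combining the leftover $\sum_{n=0}^{g-1}[\Sym^nC]\LL^n=[\Sym^{g-1}C]\LL^{g-1}+\sum_{i=0}^{g-2}[\Sym^iC]\LL^i$ with the $\sum_{i=0}^{g-2}\LL^{3g-3-2i}[\Sym^iC]$ produced above gives the first two groups $\sum_{i=0}^{g-2}[\Sym^iC](\LL^i+\LL^{3g-2i-3})+[\Sym^{g-1}C]\LL^{g-1}$. Assembling the four pieces proves \eqref{equation:kapranov-reinterpreted}.

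The only delicate point — hardly an obstacle — is legitimizing the specialization $t\mapsto\LL$: one cannot literally substitute $\LL$ into a formal power series over $\Kzero(\Var/k)$, so one either invokes \cref{theorem:kapranov} to interpret $\ZZKap(C,\LL)$ in the appropriate localization, or, equivalently, notes as above that only finitely many $[\Sym^nC]$ are "genuinely new" and the remaining tail is a geometric series that does specialize at $t=\LL$ on the nose. Everything else is routine bookkeeping with geometric series and shifts of summation index, so I would relegate those computations to a single displayed chain of equalities.
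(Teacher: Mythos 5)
Your argument is correct and is essentially the paper's own proof: both split $\ZZKap(C,\LL)$ into the ranges $n\leq g-1$, $g\leq n\leq 2g-2$, and $n\geq 2g-1$, apply \cref{proposition:fundamental} on the middle range, use the projective-bundle description of $\Sym^nC$ over $\Jac C$ for the tail, and sum the resulting geometric series. The paper handles the specialization $t\mapsto\LL$ exactly as you do in your closing remark, namely by noting that only finitely many powers of $\LL$ contribute to each power of $t$ before evaluation.
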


\begin{proof}
  Using \cref{proposition:fundamental} we have the identity\footnote{One avoids manipulating infinite sums of~$\LL$ by observing that~$\ZZKap(C,t)$ is a rational function in~$t$, and only finitely many copies of powers of~$\LL$ are contributing to each power of~$t$ before evaluation.}
  \begin{equation}
    \begin{aligned}
      \ZZKap(C,\LL)
      &=\left( \sum_{i=0}^{g-2}[\Sym^i]\LL^i \right)
      +[\Sym^{g-1}C]\LL^{g-1}
      +\left( \sum_{i=0}^{g-2}[\Sym^{2g-i-2}C]\LL^{2g-i-2} \right) \\
      &\qquad+\left( \sum_{i\geq 2g-1}[\Sym^iC]\LL^i \right) \\
      &=\left( \sum_{i=0}^{g-2}[\Sym^i]\LL^i \right)
      +[\Sym^{g-1}C]\LL^{g-1}\\
      &\qquad+\left( \sum_{i=0}^{g-2}[\Sym^iC]\LL^{3g-2i-3}+[\Jac C][\bP^{g-i-2}]\LL^{2g-i-2} \right) \\
      &\qquad+\left( \sum_{i\geq 2g-1}[\Jac C][\bP^{i-g}]\LL^i \right).
    \end{aligned}
  \end{equation}
  It now suffices to see that
  \begin{equation}
    \left( \sum_{i\geq 2g-1}[\Jac C][\bP^{i-g}]\LL^i \right)
    =
    \frac{[\Jac C]\LL^{2g-1}}{1-\LL}\left( \frac{1}{1-\LL}-\frac{\LL^g}{1-\LL^2} \right)
  \end{equation}
  which follows from an immediate verification.
\end{proof}

The following lemma is also an immediate verification.
\begin{lemma}
  \label{lemma:L-identity}
  We have the identity
  \begin{equation}
    \sum_{i=0}^{g-2}\LL^{2g-i-2}(1-\LL^{g-i-1})(1-\LL^2)+\LL^{2g-1}(1+\LL-\LL^g)
    =
    \LL^g
  \end{equation}
  in~$\Kzero(\Var/k)$.
\end{lemma}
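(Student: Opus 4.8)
The statement is an identity of polynomials in $\LL$, so the plan is simply to expand both sides and compare; since $\Kzero(\Var/k)$ receives a ring homomorphism from $\bZ[\LL]$, it suffices to verify the identity in $\bZ[\LL]$, or equivalently in $\bZ(\LL)$ after clearing the denominators $1-\LL$ and $1-\LL^2$ that appear in the intermediate geometric sums.

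First I would expand the summand. Writing $(1-\LL^{g-i-1})(1-\LL^2)=1-\LL^2-\LL^{g-i-1}+\LL^{g-i+1}$ and multiplying by $\LL^{2g-i-2}$ gives
\[
  \LL^{2g-i-2}(1-\LL^{g-i-1})(1-\LL^2)=\LL^{2g-i-2}-\LL^{2g-i}-\LL^{3g-2i-3}+\LL^{3g-2i-1}.
\]
Summing over $i=0,\dots,g-2$ produces four finite geometric progressions: the first two have common ratio $\LL$ and exponents running over $[g,2g-2]$ and $[g+2,2g]$ respectively, while the last two have common ratio $\LL^2$ and exponents running over $[g+1,3g-3]$ and $[g+3,3g-1]$. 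Evaluating each as a closed-form geometric sum and combining the first pair and the last pair separately, the numerators each acquire a factor $1-\LL^2$ which cancels the denominator $1-\LL$ (resp.\ $1-\LL^2$), leaving
\[
  \sum_{i=0}^{g-2}\LL^{2g-i-2}(1-\LL^{g-i-1})(1-\LL^2)=(1+\LL)\LL^g(1-\LL^{g-1})-\LL^{g+1}(1-\LL^{2g-2}).
\]

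Finally I would expand the right-hand side of this as $\LL^g-\LL^{2g-1}-\LL^{2g}+\LL^{3g-1}$ and add the remaining term $\LL^{2g-1}(1+\LL-\LL^g)=\LL^{2g-1}+\LL^{2g}-\LL^{3g-1}$; the four ``error'' monomials cancel in pairs and one is left with exactly $\LL^g$, as claimed. There is no genuine obstacle beyond careful bookkeeping of the exponent ranges (and remembering that these ranges have $g-1$ terms, so the manipulation is valid already for $g=2$, where the left-hand sum degenerates to the single term $\LL^2(1-\LL)(1-\LL^2)$ and the identity $\LL^2(1-\LL)(1-\LL^2)+\LL^3(1+\LL-\LL^2)=\LL^2$ is immediate).
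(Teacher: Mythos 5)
Your verification is correct, and it is exactly the computation the paper has in mind: the paper offers no argument beyond declaring the lemma ``an immediate verification,'' and your expansion into four geometric progressions, followed by the cancellation of the error monomials $\LL^{2g-1}+\LL^{2g}-\LL^{3g-1}$, is that verification carried out. (Everything takes place in $\bZ[\LL]$ since each quotient $\frac{1-\LL^{n}}{1-\LL}$ is an honest polynomial, so no issue arises from $\Kzero(\Var/k)$ failing to be a domain.)
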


Using \cref{proposition:kapranov-reinterpreted,lemma:L-identity} we can then obtain the following corollary to \cref{theorem:kzero}. The right-hand side is an element of~$\Kzero(\Var/k)$ (and not some completion) by \cref{theorem:kapranov}, so that the first term of the right-hand side is in fact~$F_{2g}(\LL)$.
\begin{corollary}
  We have the identity
  \begin{equation}
    (1-\LL)(1-\LL^2)[\odd]=(1-\LL)(1-\LL^2)\ZZKap(C,\LL)-\LL^g[\Jac C]
  \end{equation}
  in~$\Kzero(\Var/k)$.
\end{corollary}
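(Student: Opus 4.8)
The plan is to combine \cref{theorem:kzero}, \cref{proposition:kapranov-reinterpreted} and \cref{lemma:L-identity} in a bookkeeping argument: substitute the formula for $[\odd]$ into the formula for $\ZZKap(C,\LL)$, multiply through by $(1-\LL)(1-\LL^2)$ to kill all denominators (and the torsion term), and collapse the $[\Jac C]$-contribution using \cref{lemma:L-identity}.

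First I would split the right-hand side of the formula for $\ZZKap(C,\LL)$ in \cref{proposition:kapranov-reinterpreted} as $\Sigma + J$, where
\[
  \Sigma := \sum_{i=0}^{g-2}[\Sym^iC](\LL^i+\LL^{3g-3-2i}) + [\Sym^{g-1}C]\LL^{g-1}
\]
is the \emph{symmetric-power part} and $J$ is the remaining \emph{Jacobian part}. Comparing $\Sigma$ with \cref{theorem:kzero} gives $\Sigma = [\odd] - T$ with $(1+\LL)T = 0$; hence $\ZZKap(C,\LL) = [\odd] - T + J$.

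Next I would multiply by $(1-\LL)(1-\LL^2)$. Since $1-\LL^2 = (1-\LL)(1+\LL)$ and $(1+\LL)T=0$, the torsion term dies: $(1-\LL)(1-\LL^2)T = (1-\LL)^2(1+\LL)T = 0$. For the Jacobian part, expanding $[\bP^{g-i-2}] = (1-\LL^{g-i-1})/(1-\LL)$ in
\[
  J = \sum_{i=0}^{g-2}[\Jac C][\bP^{g-i-2}]\LL^{2g-2-i} + \frac{[\Jac C]\LL^{2g-1}}{1-\LL}\Bigl(\frac{1}{1-\LL}-\frac{\LL^{g}}{1-\LL^2}\Bigr)
\]
and using $(1-\LL^2)/(1-\LL) = 1+\LL$ in the last term, I get
\[
  (1-\LL)(1-\LL^2)J = \Bigl(\sum_{i=0}^{g-2}\LL^{2g-2-i}(1-\LL^{g-i-1})(1-\LL^2) + \LL^{2g-1}(1+\LL-\LL^{g})\Bigr)[\Jac C],
\]
and by \cref{lemma:L-identity} the bracketed polynomial in $\LL$ equals $\LL^g$. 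Therefore $(1-\LL)(1-\LL^2)\ZZKap(C,\LL) = (1-\LL)(1-\LL^2)[\odd] + \LL^g[\Jac C]$, which rearranges to the claim.

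The argument is purely mechanical and I do not expect a genuine obstacle; the one point worth a word is interpreting $\ZZKap(C,\LL)$ correctly. A priori it is a formally infinite sum, but by \cref{theorem:kapranov} the series $\ZZKap(C,t)$ is rational with denominator $(1-t)(1-\LL t)$, which at $t=\LL$ specializes to $(1-\LL)(1-\LL^2)$; thus $(1-\LL)(1-\LL^2)\ZZKap(C,\LL) = F_{2g}(\LL)$ genuinely lies in $\Kzero(\Var/k)$, consistent with the resummation of the tail $\sum_{i\geq 2g-1}[\Sym^iC]\LL^i$ already performed inside \cref{proposition:kapranov-reinterpreted}. Modulo this remark, the whole proof is the two polynomial identities cited above.
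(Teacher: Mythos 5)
Your proof is correct and follows essentially the same route as the paper: multiply the identity of \cref{proposition:kapranov-reinterpreted} by $(1-\LL)(1-\LL^2)$, collapse the Jacobian contribution to $\LL^g[\Jac C]$ via \cref{lemma:L-identity}, and identify the symmetric-power part with $[\odd]$ through \cref{theorem:kzero}. Your explicit remarks that $(1-\LL)(1-\LL^2)T=0$ kills the torsion term and that rationality of $\ZZKap(C,t)$ justifies the evaluation at $t=\LL$ are points the paper leaves implicit or relegates to a footnote, but they match its intent.
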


\begin{proof}
  It suffices to rewrite~$[\bP^{g-i-2}]$ as~$\frac{1-\LL^{g-i-1}}{1-\LL}$ and then multiply both sides of~\eqref{equation:kapranov-reinterpreted} with~$(1-\LL)(1-\LL^2)$, and apply \cref{lemma:L-identity}.
\end{proof}

This identity is then the analogue of
Harder's formula from \cite[Corollary~2.11]{MR1895918}
using the functional equation for the zeta function.

\appendix

\section{Dimension of critical loci}
\label{section:critical}

We will now determine the dimensions of the different components of the critical loci,
which is part of \cref{theorem:critical-loci}.
The goal is the following proposition.

\begin{proposition}
  \label{proposition:dimension-critical-locus}
  Let~$\Gamma_{g,g-1}$ denote the necklace graph of genus~$g$,
  with the last vertex colored as in \cref{figure:necklace-graph}.
  Let~$\widetilde{W}_g$ denote the associated graph potential.
  Then the dimension of the critical locus of~$\widetilde{W}_g$
  over the critical value with absolute value~$8g-8-8k$ for~$k=0,\ldots,g-1$ is~$g-1$.
\end{proposition}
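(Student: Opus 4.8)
The plan is to describe the critical locus $\{\dd\widetilde{W}_g=0\}\subseteq(\bC^\times)^{3g-3}$ of the necklace graph potential completely, to decompose it into irreducible pieces, to record on which critical value each piece is contracted by $\widetilde{W}_g$, and to read off the dimension of the part lying over each of the critical values of \cref{proposition:critical-values}.

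\emph{Coordinates.} Since $\Gamma_{g,g-1}$ is bridgeless, Petersen's theorem produces a perfect matching $P\subseteq E$ with $\#P=g-1$; for the necklace graph one takes $P$ to be the bead edges visible in \cref{figure:necklace-graph}, so that by~\eqref{equation:sum-of-edge-potentials-uncolored} and~\eqref{equation:sum-of-edge-potentials-colored} the potential $\widetilde{W}_g$ is a sum over $e\in P$ of edge potentials, the colored expression occurring only for the distinguished bead $e_c$. For each bead $e$, in the local labels $a_e,b_e,c_e,d_e,x_e$ of \cref{figure:local-picture}, I pass to the four multiplicative coordinates which, together with their inverses, are precisely the eight monomials of the edge potential at $e$; along each bead they satisfy one multiplicative relation, and since consecutive beads share their non-matching edges these coordinates jointly recover the $3g-3$ edge variables.

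\emph{The critical equations.} In these coordinates the equation $x_e\partial_{x_e}\widetilde{W}_g=0$ attached to a bead variable says that the four quantities $s-s^{-1}$, as $s$ runs over the four coordinates of $e$, all coincide; the equations attached to the non-matching edges shared by two consecutive beads then propagate this common value around the necklace, so on every irreducible component of the critical locus there is a single $\delta\in\bC$ with $s-s^{-1}=\delta$ for every bead coordinate $s$. Fixing $\delta$, each bead coordinate is a root of $s^2-\delta s-1$, whose two roots have product $-1$, and the multiplicative relation at each bead prescribes how many of the four coordinates may equal each root, exactly as in the proof of \cref{proposition:critical-values}. Letting $\delta$ run over $\bA^1$ and tracking the surviving discrete data one obtains a locus whose closure in $(\bC^\times)^{3g-3}$ is of dimension $g-1$; the strata where the roots of $s^2-\delta s-1$ are themselves fourth roots of unity — which is where the distinguished critical points of \cref{proposition:critical-values} and the conifold point of \cref{proposition:conifold-point} sit — are cut out by extra equations and have strictly smaller dimension.

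\emph{Critical values and the main obstacle.} Evaluating $\widetilde{W}_g$ bead by bead, the contribution of each bead is constant along $\delta$ and the contributions telescope as in the evaluations in the proofs of \cref{proposition:critical-values,proposition:conifold-point}; running over the discrete choices recovers precisely the real values $8g-8-16k$ and the imaginary values $(8g-16-16k)\ii$, that is, all critical values of absolute value $8g-8-8k$ for $k=0,\dots,g-1$, and the part of the critical locus lying over each of them is seen to be $(g-1)$\dash dimensional. The crux is the elimination step: one must verify that the coupling along the necklace really forces a single common value $\delta$, and, more seriously, that $\{\dd\widetilde{W}_g=0\}$ has no components beyond those produced by this ansatz, so that the parameter count is an equality and not merely a lower bound. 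I expect this either from a direct elimination in the Laurent ideal generated by the $x_i\partial_{x_i}\widetilde{W}_g$, or, more structurally, from an induction on the genus — deleting a bead passes from $\Gamma_{g,g-1}$ to $\Gamma_{g-1,g-2}$ — combined with the $(\bZ/2)^{\#E}$\dash symmetry of $\widetilde{W}_g$ under sign changes of the edge variables, which controls how the critical components of $\widetilde{W}_g$ are assembled from those of $\widetilde{W}_{g-1}$; the invariance of graph potentials under elementary transformations (\cite[Theorems~2.12 and~2.13]{gp-tqft}) then shows the count is the same for every trivalent graph of genus $g$, so it indeed suffices to treat the necklace.
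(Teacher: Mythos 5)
Your setup (pass to multiplicative coordinates adapted to the perfect matching, write the potential as a sum of edge/bead potentials) is in the same spirit as the paper's, but the ansatz you build on it is wrong, and the step you yourself flag as ``the crux'' is not a technicality that can be outsourced --- it is the entire content of the proof. In the coordinates $u_i=x_iy_i$, $v_i=x_i/y_i$ the critical equations attached to the doubled edges of the $i$th bead read $J^-(u_i)(z_i+z_{i+1})=0$ and $J^-(v_i)(z_i+z_{i+1})=0$ with $J^-(s)=s-s^{-1}$ (see \eqref{equation:ui-vi-condition} and \eqref{equation:ug-vg-condition}). Each is a \emph{product} of two factors, so the critical locus is a union of components indexed by which factor vanishes at each bead; nothing forces the quantities $s-s^{-1}$ to take a single common value $\delta$, and the equations \eqref{equation:zi-condition}--\eqref{equation:z1-condition} attached to the string edges do not ``propagate'' such a value. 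Your dimension count is also internally inconsistent: one parameter $\delta$ together with finitely many discrete root choices sweeps out a locus of dimension at most $1$, not $g-1$, and since you assert the critical value is constant in $\delta$, your family cannot produce the claimed fibre dimensions for any $g>2$ (nor the dimension-$k$ stratification of \cref{theorem:critical-loci}).

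The positive-dimensional components in fact arise by an entirely different mechanism: on the stratum where all $u_i,v_i\in\{\pm1\}$, the remaining equations \eqref{equation:zi-condition}--\eqref{equation:z1-condition} have coefficients $0$ or $\pm4$ depending on a sign matrix per string, and for certain admissible sign patterns the coefficient vanishes identically, leaving the corresponding $z_i$ \emph{free}; the number of free $z_i$'s is what produces the dimension (\cref{lemma:ui-vi-pm-1} and \cref{table:sign-choices}). The complementary case, say $u_1^2\neq1$, forces $z_1+z_2=0$ and is handled by a genuine induction on the genus (reducing the necklace of genus $g$ to genus $g-1$ or $g-2$ according to whether further relations among the $z_i$ hold, with base cases $g=2,3$ in \cref{lemma:base-case}). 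Your sketch contains no workable version of either half, so as it stands it establishes neither that your family lies in the critical locus, nor that it exhausts it, nor the stated dimension. (A smaller point: the invariance under elementary transformations addresses the last clause of \cref{theorem:critical-loci} about general trivalent graphs, not this proposition, which is specifically about the necklace.)
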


The proof of \cref{proposition:dimension-critical-locus} is a lengthy computation,
in order to set up an inductive description of the critical loci.

\begin{figure}[t!]
  \centering
  \begin{tikzpicture}[scale=1]
    \clip (-1,-1) rectangle (10,2);

    \node[vertex] (A) at (0,0) {};
    \node[vertex] (B) at (1,0) {};
    \node[vertex] (C) at (2,0) {};
    \node[vertex] (D) at (3,0) {};
    \node[vertex] (E) at (4,0) {};
    \node[vertex] (F) at (5,0) {};
    \node[vertex] (Fbis) at (6,0) {};
    \node[vertex] (Gbis) at (7,0) {};
    \node[vertex] (G) at (8,0) {};
    \node[vertex,fill] (H) at (9,0) {};

    \draw (A) edge [bend left]  node [above] {$x_1$} (B);
    \draw (A) edge [bend right] node [below] {$y_1$} (B);
    \draw (B) edge node [below] {$z_1$} (C);
    \draw (C) edge [bend left]  node [above] {$x_2$} (D);
    \draw (C) edge [bend right] node [below] {$y_2$} (D);
    \draw (D) edge node [below] {$z_2$} (E);
    \draw (E) edge [bend left]  node [above] {$x_3$} (F);
    \draw (E) edge [bend right] node [below] {$y_3$} (F);
    \draw (F) edge node [below] {$z_3$} (Fbis);
    \node at (6.5,0) {$\ldots$};
    \draw (Gbis) edge node [below] {$z_{g-1}$} (G);
    \draw (G) edge [bend left]  node [above] {$x_{g-1}$} (H);
    \draw (G) edge [bend right] node [below] {$y_{g-1}$} (H);
    \draw (A) edge [bend left=160] node [above] {$z_1=z_g$} (H);
  \end{tikzpicture}
  \caption{Labelling of variables on the necklace graph}
  \label{figure:necklace-graph}
\end{figure}
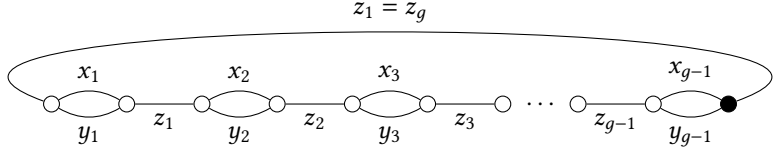

Instead of the coordinates~$x_i,y_i,z_i$ we will work with the coordinates
\begin{equation}
  \begin{aligned}
    u_i&:= x_iy_i \\
    v_i&:= x_i/y_i
  \end{aligned}
\end{equation}
which makes the graph potential easier to work with.

The necklace graph from \cref{figure:necklace-graph} can be decomposed in two ways,
using \emph{beads} and \emph{strings}.
In the coordinates~$u_i,v_i,z_i$ we describe the local pictures and the labelling of the variables in \cref{figure:local-picture-necklace-graph}.

We will moreover use the following notation for two functions on~$\Gm$:
\begin{equation}
  \begin{aligned}
    J^+(x)&:= x+x^{-1} \\
    J^-(x)&:= x-x^{-1},
  \end{aligned}
\end{equation}
so that~$x\frac{\partial}{\partial x}J^+(x)=J^-(x)$.

With this notation and the decomposition of the necklace graph the graph potential can be written as the sum of either bead potentials or string potentials. Here the $i$th \emph{bead potential} is given as
\begin{equation}
  \label{equation:bead-potential}
  \widetilde{W}_{g,i}^{\mathrm{b}}:=
  \begin{cases}
    z_iJ^+(u_i)+z_i^{-1}J^+(v_i)+z_{i+1}J^+(u_i)+z_{i+1}^{-1}J^+(v_i) & i=1,\ldots,g-2 \\
    z_{g-1}J^+(u_{g-1})+z_{g-1}^{-1}J^+(v_{g-1})+z_1J^+(v_{g-1})+z_1^{-1}J^+(u_{g-1}) & i=g-1
  \end{cases}
\end{equation}
and the $i$th \emph{string potential} is given as
\begin{equation}
  \label{equation:string-potential}
  \widetilde{W}_{g,i}^{\mathrm{s}}:=
  \begin{cases}
    z_1J^+(v_{g-1})+z_1^{-1}J^+(u_{g-1})+z_1J^+(u_1)+z_1^{-1}J^+(v_1) & i=1 \\
    z_iJ^+(u_{i-1})+z_i^{-1}J^+(v_{i-1})+z_iJ^+(u_i)+z_i^{-1}J^+(v_i) & i=2,\ldots,g-1 \\
  \end{cases}.
\end{equation}
This allows us to write
\begin{equation}
  \widetilde{W}_g
  =
  \sum_{i=1}^{g-1}\widetilde{W}_{g,i}^{\mathrm{b}}
  =
  \sum_{i=1}^{g-1}\widetilde{W}_{g,i}^{\mathrm{s}}.
\end{equation}

\begin{figure}[t!]
  \centering
  \begin{subfigure}[b]{.49\textwidth}
    \centering
    \begin{tikzpicture}[scale=1]
      \node[vertex] (first)  at (0,0) {};
      \node[vertex] (second) at (1,0) {};
      \node (A) at (-1,0) {};
      \node (B) at (2,0)  {};

      \draw (A)     edge node [above] {$z_i$} (first);
      \draw (first) edge [bend left]  node [above] {$u_i$} (second);
      \draw (first) edge [bend right] node [below] {$v_i$} (second);
      \draw (B)     edge node [above] {$z_{i+1}$} (second);
    \end{tikzpicture}
    \caption{Local picture of $i$th bead}
    \label{subfigure:bead}
  \end{subfigure}
  \begin{subfigure}[b]{.49\textwidth}
    \centering
    \begin{tikzpicture}[scale=1]
      \node[vertex]      (first)  at (0,0) {};
      \node[vertex,fill] (second) at (1,0) {};
      \node (A) at (-1,0) {};
      \node (B) at (2,0)  {};

      \draw (A)     edge node [above] {$z_{g-1}$} (first);
      \draw (first) edge [bend left]  node [above] {$u_{g-1}$} (second);
      \draw (first) edge [bend right] node [below] {$v_{g-1}$} (second);
      \draw (B)     edge node [above] {$z_g=z_1$} (second);
    \end{tikzpicture}
    \caption{Local picture of the final bead}
    \label{subfigure:final-bead}
  \end{subfigure}

  \begin{subfigure}[b]{.49\textwidth}
    \centering
    \begin{tikzpicture}[scale=1]
      \node[vertex] (first)  at (0,0) {};
      \node[vertex] (second) at (1,0) {};
      \node (A) at (-1,0.25)  {};
      \node (B) at (-1,-0.25) {};
      \node (C) at (2,0.25)   {};
      \node (D) at (2,-0.25)  {};

      \draw (A)     edge [bend left]  node [above] {$u_{i-1}$} (first);
      \draw (B)     edge [bend right] node [below] {$v_{i-1}$} (first);
      \draw (first) edge node [above] {$z_i$} (second);
      \draw (C)     edge [bend right] node [above] {$u_i$} (second);
      \draw (D)     edge [bend left]  node [below] {$v_i$} (second);
    \end{tikzpicture}
    \caption{Local picture of the $i$th string}
    \label{subfigure:string}
  \end{subfigure}
  \begin{subfigure}[b]{.49\textwidth}
    \centering
    \begin{tikzpicture}[scale=1]
      \node[vertex]      (first)  at (0,0) {};
      \node[vertex,fill] (second) at (4,0) {};
      \node (A) at (1,0.25)  {};
      \node (B) at (1,-0.25) {};
      \node (C) at (3,0.25)   {};
      \node (D) at (3,-0.25)  {};

      \draw (A)     edge [bend right]    node [above] {$u_{1}$} (first);
      \draw (B)     edge [bend left]     node [below] {$v_{1}$} (first);
      \draw (first) edge [bend left=120] node [above] {$z_1=z_g$} (second);
      \draw (C)     edge [bend left]     node [above] {$u_{g-1}$} (second);
      \draw (D)     edge [bend right]    node [below] {$v_{g-1}$} (second);
    \end{tikzpicture}
    \caption{Local picture of the long string}
    \label{subfigure:long-string}
  \end{subfigure}

  \caption{Local pictures of beads and strings in the necklace graph of genus $g$}
  \label{figure:local-picture-necklace-graph}
\end{figure}
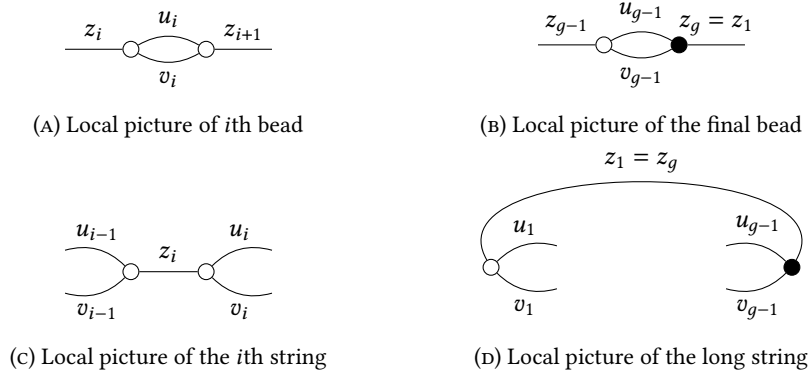

In the description of the critical locus we therefore have the equations
\begin{equation}
  \label{equation:ui-vi-condition}
  \begin{aligned}
    u_i\frac{\partial}{\partial u_i}\widetilde{W}_g
    &=u_i\frac{\partial}{\partial u_i}\widetilde{W}_{g,i}^{\mathrm{b}}
    =J^-(u_i)(z_i+z_{i+1})
    =0 \\
    v_i\frac{\partial}{\partial v_i}\widetilde{W}_g
    &=v_i\frac{\partial}{\partial v_i}\widetilde{W}_{g,i}^{\mathrm{b}}
    =J^-(v_i)(z_i+z_{i+1})
    =0
  \end{aligned}
\end{equation}
for~$i=1,\ldots,g-2$, and
\begin{equation}
  \label{equation:ug-vg-condition}
  \begin{aligned}
    u_{g-1}\frac{\partial}{\partial u_{g-1}}\widetilde{W}_g
    &=u_{g-1}\frac{\partial}{\partial u_{g-1}}\widetilde{W}_{g,g-1}^{\mathrm{b}}
    =J^-(u_{g-1})(z_{g-1}+z_1^{-1})
    = 0\\
    v_{g-1}\frac{\partial}{\partial v_{g-1}}\widetilde{W}_g
    &=v_{g-1}\frac{\partial}{\partial v_{g-1}}\widetilde{W}_{g,g-1}^{\mathrm{b}}
    =J^-(v_{g-1})(z_{g-1}^{-1}+z_1)
    =0.
  \end{aligned}
\end{equation}
Likewise we obtain the conditions
\begin{equation}
  \label{equation:zi-condition}
  z_i\frac{\partial}{\partial z_i}\widetilde{W}_g
  =z_i\frac{\partial}{\partial z_i}\widetilde{W}_{g,i}^{\mathrm{s}}
  =z_i(J^+(u_{i-1})+J^+(u_i))-z_i^{-1}(J^+(v_{i-1})+J^+(v_i))
  =0
\end{equation}
for~$i=2,\ldots,g-1$, and
\begin{equation}
  \label{equation:z1-condition}
  z_1\frac{\partial}{\partial z_i}\widetilde{W}_g
  =z_1\frac{\partial}{\partial z_1}\widetilde{W}_{g,1}^{\mathrm{s}}
  =z_1(J^+(u_1)+J^+(v_{g-1}))-z_1^{-1}(J^+(v_1)+J^+(u_{g-1}))
  =0
\end{equation}

We need to analyse the system of equations given by \eqref{equation:ui-vi-condition}, \eqref{equation:ug-vg-condition}, \eqref{equation:zi-condition} and \eqref{equation:z1-condition},
so that we can describe its solutions
and compute the value of the graph potential at these solutions.

The \emph{expected dimension} of the connected component of the critical locus
with critical value given by~$8(g-1-k)\sqrt{-1}^{1+(-1)^{k+1}}$ (for~$k=0,\ldots,2g-2)$)
is~$k$ for~$k=0,\ldots,g-1$, and~$g-k+1$ for~$k=g,\ldots,2g-2$.
I.e.~for~$k=0,2g-2$ we expect isolated critical points,
whilst for~$k=g-1$ we obtain critical value~0 and expect a critical locus of dimension~$g-1$.

Under the assumptions \eqref{equation:zi-condition} and \eqref{equation:z1-condition} it is possible to rewrite the string potentials by removing the~$z_i^{-1}$,
so that the graph potential can be expressed as
\begin{equation}
  \label{equation:simplified-potential}
  2\left(
    z_1(J^+(u_1)+J^+(v_{g-1})
    \sum_{i=2}^{g-1}z_i(J^+(u_{i-1})+J^+(u_i))
  \right).
\end{equation}
We will get rid of the factor~2, so that we are interesting in the values~$4(g-1-k)\sqrt{-1}^{1+(-1)^{k+1}}$ for the resulting expression.

\paragraph{The case $u_i^2=v_i^2=1$}
As a starting point we consider the situation in which the conditions \eqref{equation:ui-vi-condition} and \eqref{equation:ug-vg-condition} are satisfied
by ensuring that~$J^-(u_i)=J^-(v_i)=0$ for all~$i=1,\ldots,g-1$.
We need to analyse the condition on the remaining variables~$z_i$,
and what the resulting critical loci and critical values are.

The condition on the~$u_i$ and~$v_i$ ensures that $u_i=\pm1$ and~$v_i=\pm1$ for all~$i=1,\ldots,g-1$.
We will encode the sign choice the variables of the~$i$th string potential as a matrix,
so that~$(\begin{smallmatrix} s_{i-1}^u & s_i^u \\ s_{i-1}^v & s_i^v \end{smallmatrix})$
resp.~$(\begin{smallmatrix} s_{1}^u & s_{g-1}^u \\ s_{1}^v & s_{g-1}^v \end{smallmatrix})$
denotes the sign in the choice of the~$u_i$ and~$v_i$.

\begin{lemma}
  \label{lemma:ui-vi-pm-1}
  Let~$u_i^2=v_i^2=1$ for all~$i=1,\ldots,g-1$.
  If for some string the sign choice is inadmissible,
  i.e.~the parity of the signs in the sign matrix is odd,
  then at least one of the equations \eqref{equation:zi-condition} and \eqref{equation:z1-condition} cannot be satisfied.

  Assume that for every string the sign choice is admissible (i.e.~the parity is even).
  Then we have that
  \begin{enumerate}
    \item the variables~$z_i$ are either free, or are necessarily equal to~$\pm1,\pm\sqrt{-1}$;
    \item the critical loci have the expected dimension.
  \end{enumerate}
\end{lemma}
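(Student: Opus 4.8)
The plan is to feed the hypothesis $u_i^2 = v_i^2 = 1$ directly into the critical-point equations, which collapses them into a purely combinatorial system, and then to read off the dimensions and the values from that system. Writing $u_i = s_i^u$ and $v_i = s_i^v$ with $s_i^u, s_i^v \in \{\pm 1\}$, one has $J^-(u_i) = J^-(v_i) = 0$, so \eqref{equation:ui-vi-condition} and \eqref{equation:ug-vg-condition} hold automatically, while $J^+(s) = 2s$; substituting into \eqref{equation:zi-condition} and \eqref{equation:z1-condition} and cancelling the factor $2$ rewrites them as
\[
  z_i^2\,(s_{i-1}^u + s_i^u) = s_{i-1}^v + s_i^v \quad (i = 2, \ldots, g-1), \qquad z_1^2\,(s_1^u + s_{g-1}^v) = s_1^v + s_{g-1}^u ,
\]
the interchange of the roles of $u$ and $v$ at the index $g-1$ in the last equation reflecting that the final bead is coloured, cf.~\eqref{equation:bead-potential}. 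For a fixed sign choice these equations are decoupled, each one involving a single $z_i$.

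Next I would carry out the per-string case analysis. If, for some string, exactly one of the equalities $s_{i-1}^u = s_i^u$, $s_{i-1}^v = s_i^v$ holds — that is, the sign matrix has odd parity — the corresponding equation reads $0 = \pm 2$ or $\pm 2\, z_i^2 = 0$, which has no solution in $\Gm$; this is precisely the first assertion. If both equalities hold (say the string is \emph{tight}), then $z_i^2 = s_i^v / s_i^u \in \{\pm 1\}$, so $z_i \in \{\pm 1\}$ or $z_i \in \{\pm \ii\}$; if both fail (\emph{loose}), the equation is $0 = 0$ and $z_i$ is free. This gives part~(1), and shows that the solution locus attached to an admissible sign choice is a finite disjoint union of tori, each of dimension equal to the number $|S|$ of loose strings.

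For part~(2) the point I would isolate is a conserved quantity. Each string relates the sign pairs $(s_{i-1}^u, s_{i-1}^v)$ and $(s_i^u, s_i^v)$ of its two adjacent beads by an element of the Klein four-group $\langle -1, \mathrm{swap}\rangle$ acting on $\{\pm 1\}^2$: the identity (tight, $i \geq 2$), the simultaneous sign flip (loose, $i \geq 2$), the swap (tight, first string), or $-\mathrm{swap}$ (loose, first string). All four preserve the product $s^u s^v$, so this product is common to every bead, and running once around the necklace — the swap occurring exactly once — the closing-up condition pins this common value to $(-1)^{|S|}$. Hence, by \eqref{equation:simplified-potential}, if $|S|$ is even every tight string contributes $\pm 8$ to $\widetilde{W}_g$ at such a point and the critical value is purely real, whereas if $|S|$ is odd every tight string contributes $\pm 8\ii$ and the critical value is purely imaginary; loose strings contribute $0$. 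Thus the critical value equals $8\sum(\pm 1)$ (resp.\ $8\ii\sum(\pm 1)$) over the $g-1-|S|$ tight strings, so it has absolute value at most $8(g-1-|S|)$, with the parity of its coefficient fixed by $g-1-|S|$; comparing with \cref{proposition:critical-values}, the dimension $|S|$ of such a component never exceeds the expected dimension of the critical locus over its critical value. Conversely, placing all loose strings consecutively and choosing the remaining tight strings to contribute with the same sign realizes each listed critical value with $|S|$ equal to the expected dimension, completing part~(2).

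The hard part will be the conserved-quantity step — in particular tracking how the colouring of the last bead twists the cyclic consistency, so that exactly one swap appears and $|S|$ is forced to the right parity. After that, matching $8(g-1-|S|)$, together with the possible cancellations among the $g-1-|S|$ signs, to the precise indexing of \cref{proposition:critical-values} and of the expected dimensions is routine bookkeeping, and the realizability of the extremal configurations is a short explicit construction on the necklace.
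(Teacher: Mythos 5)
Your proof is correct and follows essentially the same route as the paper: reduce the $z$-equations string by string to sign matrices, where odd parity makes one side vanish and not the other (no solution in $\Gm$), and even parity splits into the tight/loose dichotomy reproducing the paper's table of eight admissible sign choices with $z_i$ pinned to $\pm1$, $\pm\ii$ or left free. Your Klein-four-group invariant $s^us^v=(-1)^{|S|}$ is a cleaner formalization of the paper's terse remark that the parity condition for consecutive strings forces the $z_i$ to be all real or all imaginary, and your two-sided dimension count (upper bound from $|S|$ loose strings plus explicit realizability of each critical value) supplies detail the paper leaves implicit.
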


\begin{proof}
  Observe that~$J^+(\pm1)=\pm2$,
  so that the coefficients of the~$z_i^{\pm1}$ in \eqref{equation:zi-condition}, \eqref{equation:z1-condition} are~$-4,0,4$.
  If the sign choice is inadmissible for the~$i$th string,
  then in \eqref{equation:zi-condition} exactly one of the coefficients of~$z_i^{\pm1}$ vanishes.
  but then the system has no solution in~$(\mathbb{C}^\times)^{3g-3}$.

  If the sign choice is admissible we can analyse the system as in \cref{table:sign-choices}: we describe~$z_i\frac{\partial}{\partial z_i}\widetilde{W}_g$ and deduce the condition it imposes on~$z_i$. This proves the first part.

  It remains to understand the dimension of the critical loci.
  For this we consider the expression of the necklace graph potential as the sum of string potentials.
  The expression in \eqref{equation:zi-condition}, \eqref{equation:z1-condition} is equal to \eqref{equation:string-potential} up to a sign.

  Observe that the parity condition for consecutive strings force the choices of~$z_i$ to be either all real, or all imaginary.
  It follows that the critical loci have the expected dimension by the evaluation of the string potentials as given in \cref{table:sign-choices}.
\end{proof}

\begin{table}
  \centering
  \begin{tabular}{c|ccc|ccc}
    & $z_1\frac{\partial}{\partial z_i}\widetilde{W}_g$ & condition on $z_1$ & $\widetilde{W}_{g,1}^{\mathrm{s}}$ & $z_i\frac{\partial}{\partial z_i}\widetilde{W}_g$ & condition on $z_i$ & $\widetilde{W}_{g,i}^{\mathrm{s}}$ \\
    \midrule
    $(\begin{smallmatrix} + & + \\ + & + \end{smallmatrix})$ & $4J^-(z_1)$  & $z_1=\pm1$         &  & $4J^-(z_i)$  & $z_i=\pm1$         & \\
    $(\begin{smallmatrix} - & - \\ - & - \end{smallmatrix})$ & $-4J^-(z_1)$ & $z_1=\pm1$         &  & $-4J^-(z_i)$ & $z_i=\pm1$         & \\
    $(\begin{smallmatrix} + & + \\ - & - \end{smallmatrix})$ & $0$          & $z_1$ free         &  & $4J^+(z_i)$  & $z_i=\pm\sqrt{-1}$ & \\
    $(\begin{smallmatrix} - & - \\ + & + \end{smallmatrix})$ & $0$          & $z_1$ free         &  & $-4J^+(z_i)$ & $z_i=\pm\sqrt{-1}$ & \\
    $(\begin{smallmatrix} + & - \\ - & + \end{smallmatrix})$ & $4J^+(z_1)$  & $z_1=\pm\sqrt{-1}$ &  & $0$          & $z_i$ free         & \\
    $(\begin{smallmatrix} - & + \\ + & - \end{smallmatrix})$ & $-4J^+(z_1)$ & $z_1=\pm\sqrt{-1}$ &  & $0$          & $z_i$ free         & \\
    $(\begin{smallmatrix} + & - \\ + & - \end{smallmatrix})$ & $0$          & $z_1$ free         &  & $0$          & $z_i$ free         & \\
    $(\begin{smallmatrix} - & + \\ - & + \end{smallmatrix})$ & $0$          & $z_1$ free         &  & $0$          & $z_i$ free         & \\
  \end{tabular}
  \caption{Admissible sign choices and conditions \eqref{equation:zi-condition}, \eqref{equation:z1-condition}}
  \label{table:sign-choices}
\end{table}

\paragraph{Inductive description}
We now consider the case where there is at least one~$u_i^2\neq 1$ or~$v_i^2\neq 1$.
The goal is to relate the equations of the critical locus for the necklace graph of genus~$g$ to that of the necklace graph of genus~$g-2$ and~$g-1$.

Hence we first need to describe what happens for~$g=2$ and~$g=3$.
\begin{lemma}
  \label{lemma:base-case}
  \Cref{proposition:dimension-critical-locus} holds for~$g=2,3$.
\end{lemma}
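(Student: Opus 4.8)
The plan is to prove \cref{lemma:base-case} by solving the critical-point equations \eqref{equation:ui-vi-condition}, \eqref{equation:ug-vg-condition}, \eqref{equation:zi-condition}, \eqref{equation:z1-condition} outright in the two cases: for $g \le 3$ there are at most six variables, so the critical locus can be written down in full and its components enumerated by hand. In both cases \cref{lemma:ui-vi-pm-1} already disposes of the part of the locus on which $u_i^2 = v_i^2 = 1$ for every $i$ — this is exactly the sign bookkeeping tabulated in \cref{table:sign-choices} — so what is left is to analyse the complementary locus, on which some $u_i^2 \ne 1$ or some $v_i^2 \ne 1$, and to check that it contributes only components of the dimension predicted by \cref{proposition:dimension-critical-locus}.

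For $g = 2$ the necklace graph is the Theta graph, and in the coordinates $u = u_1$, $v = v_1$, $z = z_1$ the graph potential is $\widetilde{W}_2 = J^+(z)\bigl(J^+(u) + J^+(v)\bigr)$, with critical equations $J^-(u)J^+(z) = J^-(v)J^+(z) = 0$ and $J^-(z)\bigl(J^+(u) + J^+(v)\bigr) = 0$. On the complementary locus one must have $J^+(z) = 0$, hence $z = \pm\ii$ and $J^-(z) \ne 0$, so the last equation collapses to $J^+(u) + J^+(v) = 0$; this is a smooth affine curve in $(\bC^\times)^2$ along which $\widetilde{W}_2$ vanishes identically. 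Together with the lines over the critical value $0$ coming out of \cref{table:sign-choices}, this makes the critical locus over $0$ one-dimensional, while over the values of modulus $8$ it is the finite set of points listed in \cref{table:sign-choices}; these are the dimensions $g-1$ and $0$ asserted by \cref{proposition:dimension-critical-locus}.

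For $g = 3$ there are six variables $u_1, v_1, z_1, u_2, v_2, z_2$, and again by \cref{lemma:ui-vi-pm-1} only the complementary locus needs work. My approach would be: if some $u_1^2 \ne 1$ or $v_1^2 \ne 1$ then \eqref{equation:ui-vi-condition} forces $z_1 + z_2 = 0$, while if some $u_2^2 \ne 1$ or $v_2^2 \ne 1$ then \eqref{equation:ug-vg-condition} forces $z_1 z_2 = -1$; in each resulting branch one substitutes this relation between $z_1$ and $z_2$ into the two string equations \eqref{equation:zi-condition}, \eqref{equation:z1-condition}, which then either force the remaining $u_j, v_j$ into a finite set — returning to configurations already covered by \cref{table:sign-choices} — or cut out a family of the expected dimension along which $\widetilde{W}_3$ is constant, equal to $0$ or to one of the purely imaginary values $\pm 8\ii$. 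Combining all branches, the critical locus over the value of modulus $16$, $8$, $0$ has dimension $0$, $1$, $2$ respectively, which establishes \cref{proposition:dimension-critical-locus} for $g = 3$.

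The substitutions are routine; the step that needs care is purely combinatorial, namely checking that the listed branches really exhaust all solutions with some $u_i^2 \ne 1$ or $v_i^2 \ne 1$, and — since the lower bounds on the dimensions are visible from the explicit families — that no branch produces a component exceeding the predicted dimension. For $g \le 3$ this is a finite verification, and these two base cases are exactly what is needed to start the inductive reduction of the genus-$g$ necklace graph to the genus $g-1$ and $g-2$ cases which occupies the rest of \cref{section:critical}.
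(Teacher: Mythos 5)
Your $g=2$ computation coincides with the paper's: the system collapses to $J^+(z)J^-(u)=J^+(z)J^-(v)=J^-(z)\bigl(J^+(u)+J^+(v)\bigr)=0$, giving the curve $J^+(u)+J^+(v)=0$ over the critical value $0$ and isolated points over $\pm8$. For $g=3$ your route is organized differently: you enumerate branches directly according to which of $z_1+z_2=0$ (forced by $u_1^2\neq1$ or $v_1^2\neq1$) and $z_1z_2=-1$ (forced by $u_2^2\neq1$ or $v_2^2\neq1$) holds, whereas the paper simply invokes the genus-reduction lemmas proved later in \cref{section:critical} (which drop the genus by $1$ or $2$, hence land in the already-settled $g=2$ case) and treats by hand only the configuration where that reduction would produce $g=1$ --- which is exactly the overlap of your two branches, $z_1+z_2=0$ together with $z_1z_2=-1$, forcing $z_1=-z_2=\mp1$ and then $J^+(u_1)=J^+(v_1)$, $J^+(u_2)=J^+(v_2)$, a $2$\dash dimensional component over the value $0$. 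Your enumeration does close up: in the branch $z_1+z_2=0$, $z_1z_2\neq-1$ the two string equations sum to $\bigl(J^+(u_2)-J^+(v_2)\bigr)J^+(z_1)=0$, splitting into a $2$\dash dimensional family over $0$ (when $u_2=v_2$) and a $1$\dash dimensional family over $\pm8\ii$ (when $z_1^2=-1$), and symmetrically in the other branch. You leave this finite check as ``routine'', which is no less detail than the paper itself supplies; the one point to be careful about is that the overlap branch must be treated separately, since it is the unique case that does not reduce to a smaller genus and it is the case the paper singles out.
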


\begin{proof}
  For~$g=2$ we have~$\widetilde{W}_2=J^+(z)(J^+(u)+J^+(v))$
  and thus the system of partial derivatives
  \begin{equation}
    \left\{
      \begin{aligned}
        0&=\frac{\partial\widetilde{W}_2}{\partial u}=J^+(z)J^-(u) \\
        0&=\frac{\partial\widetilde{W}_2}{\partial v}=J^+(z)J^-(v) \\
        0&=\frac{\partial\widetilde{W}_2}{\partial z}=J^-(z)(J^+(u)+J^+(v))
      \end{aligned}
    \right.
  \end{equation}
  so that a case-by-case analysis yields
  \begin{itemize}
    \item $J^+(z)=0$ and $J^+(u)+J^+(v)=0$ corresponds to 1-dimensional critical loci with critical value~0;
    \item $J^-(z)=0$ and $J^-(u)=J^-(v)=0$ corresponds to 0-dimensional critical loci with critical value~$\pm8$.
  \end{itemize}
  For~$g=3$ is similar, using the reduction methods from the next lemmas,
  except for the case where~$u_1^2\neq 1$ and~$z_1+z_2^{-1}=0$
  where the reduction methods would yield~$g=1$.
  If~$u_1^2\neq 1$ then~$z_1=-z_2$ and~$z_1=-z_2^{-1}$ by assumption.
  Then~$z_2=\pm 1$ and~$z_1=\mp 1$.
  Now cancelling~$z_1$ and~$z_2$ gives
  \begin{equation}
    2J^+(u_1)+J^+(u_2)+J^+(v_2)=2J^+(v_1)+J^+(u_2)+J^+(v_2)
  \end{equation}
  which implies~$J(u_1)=J(v_1)$.
  Substituting this we get~$J^+(u_2)=J^+(v_2)$,
  and thus this critical locus has dimension~2 and critical value~0.
\end{proof}

We can assume that~$u_1^2\neq 1$, by the rational change of coordinates in the change of coloring \cite[\S2.2]{gp-tqft}.
From the condition \eqref{equation:ui-vi-condition} for~$u_1$ we get
\begin{equation}
  \label{equation:z1-z2}
  z_1+z_2=0,
\end{equation}
and thus condition \eqref{equation:ui-vi-condition} for~$v_1$ is always satisfied.

We will first analyse the case where~$z_2+z_3=0$ or~$z_1+z_{g-1}^{-1}=0$.
Then we will be done once we've dealt with the case~$z_1+z_{g-1}^{-1}\neq 0$ and~$z_2+z_3\neq 0$.

\begin{lemma}
  Let~$u_1^2\neq 1$. Assume that~$z_2+z_3=0$. Then the critical loci have the expected dimension.
\end{lemma}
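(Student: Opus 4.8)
The plan is to reduce this case to the necklace potential~$\widetilde{W}_{g-2}$ of genus~$g-2$ and to finish by induction on~$g$, with \cref{lemma:base-case} providing the base cases~$g=2,3$; so from now on~$g\geq 4$, which is exactly what makes the beads indexed~$1$ and~$2$ ordinary (non-final), so that~$z_1,z_2,z_3$ are three genuinely distinct variables.

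First I would unwind the two hypotheses. The~$u_1$-equation in \eqref{equation:ui-vi-condition} together with~$u_1^2\neq 1$ forces~$z_1+z_2=0$ and renders the~$v_1$-equation vacuous, and with~$z_2+z_3=0$ this gives~$z_2=-z_1$,~$z_3=z_1$; the~$u_2$- and~$v_2$-equations then also hold automatically. On this locus the first two bead potentials
\[
  \widetilde{W}_{g,1}^{\mathrm{b}}=(z_1+z_2)J^+(u_1)+(z_1^{-1}+z_2^{-1})J^+(v_1),\qquad \widetilde{W}_{g,2}^{\mathrm{b}}=(z_2+z_3)J^+(u_2)+(z_2^{-1}+z_3^{-1})J^+(v_2)
\]
vanish identically, so by the bead expansion~$\widetilde{W}_g=\sum_{i=1}^{g-1}\widetilde{W}_{g,i}^{\mathrm{b}}$ the restriction of~$\widetilde{W}_g$ involves only the variables~$u_i,v_i$ for~$i=3,\dots,g-1$ and~$z_1,z_4,\dots,z_{g-1}$. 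Under the relabelling~$u'_j:=u_{j+2}$,~$v'_j:=v_{j+2}$,~$z'_1:=z_1\,(=z_3)$,~$z'_j:=z_{j+2}$, a comparison of the bead expansions of the two necklaces shows that this restriction is exactly~$\widetilde{W}_{g-2}$ (the closing edge~$z_1$ of~$\Gamma_{g,g-1}$ becomes the long-string edge of~$\Gamma_{g-2,g-3}$, and the coloured vertex stays in the final bead); in particular~$\widetilde{W}_g$ and~$\widetilde{W}_{g-2}$ take equal values at corresponding points.

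Next I would match the equations of the critical locus. The equations \eqref{equation:ui-vi-condition}, \eqref{equation:ug-vg-condition} for~$i\geq 3$ and \eqref{equation:zi-condition} for~$z_4,\dots,z_{g-1}$ are literally the critical equations of~$\widetilde{W}_{g-2}$ in the new coordinates. Among the three genus-$g$ equations attached to~$z_1,z_2,z_3$, a short computation in which every monomial in~$u_1,v_1,u_2,v_2$ cancels gives, on our locus,
\[
  z_1\tfrac{\partial}{\partial z_1}\widetilde{W}_g+z_2\tfrac{\partial}{\partial z_2}\widetilde{W}_g+z_3\tfrac{\partial}{\partial z_3}\widetilde{W}_g=z'_1\tfrac{\partial}{\partial z'_1}\widetilde{W}_{g-2},
\]
so on the critical locus of~$\widetilde{W}_{g-2}$ those three equations reduce to just~$z_1\tfrac{\partial}{\partial z_1}\widetilde{W}_g=0$ and~$z_2\tfrac{\partial}{\partial z_2}\widetilde{W}_g=0$; writing these out and using the first to simplify the second they become~$z_1J^+(u_1)-z_1^{-1}J^+(v_1)=A$ and~$z_1J^+(u_2)-z_1^{-1}J^+(v_2)=-A$, where~$A=z_1^{-1}J^+(u_{g-1})-z_1J^+(v_{g-1})$ depends only on the genus-$(g-2)$ data. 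Each is the vanishing of a nonzero Laurent polynomial in two variables, hence cuts out a curve in~$(\bC^\times)^2$, nonempty because~$J^+\colon\Gm\to\bC$ is surjective, and one may always arrange~$u_1^2\neq 1$ on the first. Thus forgetting~$(u_1,v_1,u_2,v_2)$ defines a surjection from the critical locus of~$\widetilde{W}_g$ in this case onto that of~$\widetilde{W}_{g-2}$, with every fibre a product of two affine plane curves, of dimension~$2$.

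To conclude, this surjection has equidimensional fibres of dimension~$2$ and preserves critical values, so the part of the critical locus of~$\widetilde{W}_g$ over a critical value~$v$ has dimension~$2+\dim(\text{critical locus of }\widetilde{W}_{g-2}\text{ over }v)$; by the inductive hypothesis the latter has the expected dimension for genus~$g-2$ at~$v$, and since~$8g-8-8k=8(g-2)-8-8k'$ forces~$k=k'+2$, adding~$2$ gives the expected dimension for genus~$g$ at~$v$, which proves the lemma. There is nothing deep here, and the main obstacle is purely the bookkeeping: getting right the combinatorial identification of~$\Gamma_{g-2,g-3}$ inside~$\Gamma_{g,g-1}$ (removing beads~$1$ and~$2$ and correctly reusing the edge~$z_1$, keeping the coloured vertex in place) and the cancellation identity for the sum of the three~$z$-derivatives, which is exactly what collapses three equations to two constraints on~$(u_1,v_1,u_2,v_2)$ once one has imposed the genus-$(g-2)$ critical equations. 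The genus~$3$ situation, where bead~$2$ is the final bead and the reduction would produce genus~$1$, is dealt with separately in \cref{lemma:base-case}.
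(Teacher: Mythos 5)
Your proof is correct and follows essentially the same route as the paper: impose $z_1=-z_2=z_3$, observe that the potential restricts to $\widetilde{W}_{g-2}$, note that the sum of the three $z$-derivative equations is exactly the missing genus-$(g-2)$ critical equation, and induct on the genus with \cref{lemma:base-case} as base. You are in fact more explicit than the paper about where the two extra dimensions come from, namely that the remaining two equations cut the $(u_1,v_1,u_2,v_2)$-fibre down to a product of two plane curves; this is a welcome clarification of the paper's terse dimension count.
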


\begin{proof}
  By \eqref{equation:z1-z2} and the assumption we have that~$z_1=z_3$. The equations \eqref{equation:ui-vi-condition} for~$i=1,2$ are automatically satisfied and can be ignored.

  We can rewrite \eqref{equation:zi-condition} for~$i=2$ as
  \begin{equation}
    -z_3(J^+(u_1)+J^+(u_2))-z_3^{-1}(J^+(v_1)+J^+(v_2))=0
  \end{equation}
  and \eqref{equation:z1-condition} as
  \begin{equation}
    z_3(J^+(u_3)+J^+(v_{g-1}))-z_3^{-1}(J^+(v_1)+J^+(u_{g-1}))=0.
  \end{equation}
  By summing these equations with \eqref{equation:zi-condition} for~$i=3$ we obtain
  \begin{equation}
    z_3(J^+(u_3)+J^+(v_{g-1}))-z_3^{-1}(J^+(v_3)+J^+(u_{g-1}))=0.
  \end{equation}
  Finally observe that the first few terms in the expression of the graph potential from \eqref{equation:simplified-potential} read
  \begin{equation}
    \widetilde{W}_g=
    z_1(J^+(u_1)+J^+(v_{g-1}))
    +z_2(J^+(u_1)+J^+(u_2))
    +z_3(J^+(u_3)+J^+(u_3))
    +\ldots
  \end{equation}
  so that by cancellations from the equalities~$z_1=-z_2=z_3$ it reduces to that of genus~$g-2$,
  and the sought-after critical values are of the form~$4((g-2)-1-(k-2))\sqrt{-1}^{1+(-1)^{k+1}}$ for~$k=1,\ldots,2g-3$.

  Hence we have reduced the system of equations and the graph potential for the necklace graph of genus~$g$ to that of genus~$g-2$.
  By the base case from \cref{lemma:base-case},
  the fact that the critical loci for the necklace graph of genus~$g-2$ are of the expected dimension,
  and that we used the two equations~$z_1+z_2=0$ and~$z_2+z_3=0$,
  we obtain that the critical loci for~$k=1,\ldots,2g-3$ are also of the expected dimension~$k+2$.
\end{proof}

A similar analysis shows the following.
\begin{lemma}
  Let~$u_1^2\neq 1$. Assume that~$z_1+z_{g-1}^{-1}=0$. Then the critical loci have the expected dimension.
\end{lemma}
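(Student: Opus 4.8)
The argument is parallel to the proof of the preceding lemma (the case $z_2+z_3=0$), and I would run the same reduction, now with the hypothesis $z_1+z_{g-1}^{-1}=0$ playing the role of $z_2+z_3=0$. As in the last two lemmas, the assumption $u_1^2\neq 1$ forces, via \eqref{equation:ui-vi-condition} applied to $u_1$, the relation $z_1+z_2=0$, after which the $v_1$-equation of \eqref{equation:ui-vi-condition} is automatic. The new point is that $z_1+z_{g-1}^{-1}=0$ is equivalent to $z_{g-1}=-z_1^{-1}$ (hence also $z_{g-1}^{-1}=-z_1$); substituting this into the \emph{colored} bead potential $\widetilde{W}_{g,g-1}^{\mathrm{b}}$ of \eqref{equation:bead-potential}, which can be written as $(z_{g-1}+z_1^{-1})J^+(u_{g-1})+(z_{g-1}^{-1}+z_1)J^+(v_{g-1})$, makes both coefficients vanish, so $\widetilde{W}_{g,g-1}^{\mathrm{b}}$ collapses; in particular both equations of \eqref{equation:ug-vg-condition} become vacuous and $u_{g-1},v_{g-1}$ are free. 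As in the preceding lemma, $z_2=-z_1$ also makes the first bead potential $\widetilde{W}_{g,1}^{\mathrm{b}}$ vanish, so the two beads incident to the edge $z_1$ disappear.

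Next I would rewrite the $z$-equations \eqref{equation:zi-condition}, \eqref{equation:z1-condition}: substitute $z_2=-z_1$ into \eqref{equation:zi-condition} for $i=2$, substitute $z_{g-1}=-z_1^{-1}$ into \eqref{equation:zi-condition} for $i=g-1$ and into \eqref{equation:z1-condition}, and then form suitable sums with the neighbouring $z$-equations, exactly as in the preceding lemma, to cancel the spurious $J^+$-terms involving $u_1,v_1,u_{g-1},v_{g-1}$. This eliminates those four variables from the system, and what remains is exactly the critical-point system \eqref{equation:ui-vi-condition}, \eqref{equation:ug-vg-condition}, \eqref{equation:zi-condition}, \eqref{equation:z1-condition} for the necklace graph of genus $g-2$, after relabelling ($z_1$ becoming its closing edge) and, if needed, a change of coloring absorbed by the rational coordinate change of \cite[\S2.2]{gp-tqft}. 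In the same way, substituting $z_1=-z_2$ and $z_{g-1}=-z_1^{-1}$ into the simplified expression \eqref{equation:simplified-potential} for $\widetilde{W}_g$, the contributions of beads $1$ and $g-1$ cancel and $\widetilde{W}_g$ reduces (up to the overall factor $2$) to the graph potential of the genus $g-2$ necklace, so the relevant critical values are those of the genus $g-2$ necklace shifted as in the preceding lemma.

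Since the cases $g=2,3$ are covered by \cref{lemma:base-case} (for $g=3$ the hypothesis $z_1+z_{g-1}^{-1}=0$ is precisely the exceptional case treated there), I may assume $g\geq 4$, so that $g-2\geq 2$ and the inductive hypothesis applies: the critical loci of the genus $g-2$ necklace potential have the expected dimension. Passing back from genus $g-2$ to genus $g$ we have imposed the two relations $z_2=-z_1$ and $z_{g-1}=-z_1^{-1}$ while the variables $u_{g-1},v_{g-1}$ have become free, so the dimension of each component increases by exactly $2$ relative to genus $g-2$, matching the prediction of \cref{proposition:dimension-critical-locus} for genus $g$. The main obstacle is pure bookkeeping: confirming that the reduced system is genuinely the genus $g-2$ necklace system so that the inductive hypothesis applies verbatim, and tracking the coloring — here, unlike in the preceding lemma, it is the colored bead whose potential vanishes, so one must check via \cite[Corollary~2.9]{gp-tqft} and \cite[Theorems~2.12 and~2.13]{gp-tqft} that the remaining (now trivially coloured) necklace still falls under \cref{proposition:dimension-critical-locus} — while correctly accounting for the two suppressed beads in both the critical-value and the dimension computations.
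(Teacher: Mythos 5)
The paper offers no written proof of this lemma beyond ``a similar analysis shows the following,'' and your proposal carries out exactly the intended analysis: use $u_1^2\neq1$ to get $z_2=-z_1$, observe that the hypothesis $z_1+z_{g-1}^{-1}=0$ kills both coefficients of the colored bead potential in \eqref{equation:bead-potential} so that beads $1$ and $g-1$ drop out, and reduce to the genus~$g-2$ necklace. One correction to the point you flag as the main obstacle: the reduced necklace is \emph{not} trivially colored, and it is fortunate that it is not, because \cite[Corollary~2.9]{gp-tqft} only relates colorings of equal parity and so could not convert an uncolored (even-parity) necklace back into the one-colored-vertex (odd-parity) case covered by \cref{proposition:dimension-critical-locus}. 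In fact the two relations give $z_{g-1}=-z_1^{-1}=z_2^{-1}$, so the surviving bead potential at $i=g-2$ becomes
\begin{equation*}
\bigl(z_{g-2}+z_2^{-1}\bigr)J^+(u_{g-2})+\bigl(z_{g-2}^{-1}+z_2\bigr)J^+(v_{g-2}),
\end{equation*}
which is precisely the \emph{colored} closing-bead potential of the genus~$g-2$ necklace with strings $z_2,\dots,z_{g-2}$; the coloring migrates to bead $g-2$ rather than disappearing. With that identification the inductive hypothesis applies verbatim and the rest of your bookkeeping (two imposed relations on the $z$'s, four freed variables $u_1,v_1,u_{g-1},v_{g-1}$ constrained only by the three residual $z$-equations) matches the treatment of the preceding lemma in the paper.
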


Finally we prove the following result.
\begin{lemma}
  Let~$u_1^2\neq 1$. Assume that~$z_1+z_{g-1}^{-1}\neq 0$ and~$z_2+z_3\neq 0$. Then the critical loci have the expected dimension.
\end{lemma}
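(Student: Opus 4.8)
The plan is to reduce the system of critical-point equations \eqref{equation:ui-vi-condition}--\eqref{equation:z1-condition} for the necklace graph of genus~$g$ to the corresponding system for the necklace graph~$\Gamma_{g-1,g-2}$ of genus~$g-1$, and then to conclude by induction: the base cases~$g=2,3$ are \cref{lemma:base-case}, and since in the present case~$g-1<g$ we may assume \cref{proposition:dimension-critical-locus} in genus~$g-1$. The geometric picture is that the bead~$1$ gets ``deleted'' (the two strings~$z_1,z_2$ around it are identified up to sign and its variables~$u_1,v_1$ survive as a one-parameter family), while the remaining beads and strings reassemble into the genus-$(g-1)$ necklace with the bead~$g-1$ still colored.

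First I would use the three hypotheses to pin down variables. Since~$u_1^2\neq 1$ we have~$J^-(u_1)\neq 0$, so \eqref{equation:ui-vi-condition} for~$u_1$ forces~$z_1+z_2=0$, after which \eqref{equation:ui-vi-condition} for~$v_1$ is vacuous. Since~$z_2+z_3\neq 0$, \eqref{equation:ui-vi-condition} for~$u_2,v_2$ forces~$u_2^2=v_2^2=1$; and since~$z_1+z_{g-1}^{-1}\neq 0$ is equivalent to~$z_1z_{g-1}\neq -1$, hence to~$z_{g-1}+z_1^{-1}\neq 0$ and~$z_{g-1}^{-1}+z_1\neq 0$, equation \eqref{equation:ug-vg-condition} forces~$u_{g-1}^2=v_{g-1}^2=1$. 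Next I would substitute~$z_1=-z_2$ into \eqref{equation:z1-condition} and compare with \eqref{equation:zi-condition} for~$i=2$: using~$J^+(\pm1)=\pm2$, the difference of these two equations becomes a linear relation imposing a compatibility (parity) condition on the signs of~$u_2,v_2,u_{g-1},v_{g-1}$ together with the value of~$z_2$ (which, exactly as in \cref{lemma:ui-vi-pm-1} and \cref{table:sign-choices}, is then either free or one of~$\pm1,\pm\sqrt{-1}$), while their sum expresses~$J^+(v_1)$ in terms of~$J^+(u_1)$, so that~$(u_1,v_1)$ traces out a one-parameter family. The remaining equations --- \eqref{equation:zi-condition} for~$i=3,\ldots,g-1$, \eqref{equation:ui-vi-condition} for~$i=3,\ldots,g-2$, and \eqref{equation:ug-vg-condition} --- should then, after renaming~$z_2$ as the long-string variable and an overall torus coordinate change absorbing the signs (a change of coloring in the sense of \cite[\S2.2]{gp-tqft}), coincide in each admissible sign pattern with the critical-point equations of~$\Gamma_{g-1,g-2}$. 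Since the deleted bead contributes~$z_2(J^+(u_2)-J^+(v_{g-1}))\in\{0,\pm4,\pm4\sqrt{-1}\}$ to the simplified potential \eqref{equation:simplified-potential} while the new long string contributes~$z_2(J^+(u_2)+J^+(v_{g-1}))$, the reduced critical value is again one of the expected genus-$(g-1)$ values; by the inductive hypothesis the corresponding critical locus of~$\widetilde{W}_{g-1}$ has the expected dimension, and the extra parameter along~$(u_1,v_1)$ accounts exactly for the increment from the genus-$(g-1)$ to the genus-$g$ expected dimension over that value.

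The hard part will be this last structural identification: verifying, sign pattern by sign pattern, that the reduced system genuinely is the critical-point system of~$\Gamma_{g-1,g-2}$ --- getting the colored bead, the orientation of the long string, and the sign twists right --- and that the bookkeeping of free versus pinned variables reproduces the expected dimensions~$k$ over the critical values of absolute value~$8g-8-8k$. This is a finite but delicate computation, entirely parallel to the analysis in the two preceding lemmas and to \cref{table:sign-choices}, and I expect no conceptual difficulty beyond carrying it out carefully.
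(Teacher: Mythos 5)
Your opening deductions are correct and match the paper's: $z_1=-z_2$ from $u_1^2\neq 1$, and $u_2^2=v_2^2=u_{g-1}^2=v_{g-1}^2=1$ from the two non-vanishing hypotheses. But the core of your plan --- a \emph{uniform} reduction to the genus-$(g-1)$ necklace, with $(u_1,v_1)$ always tracing out a one-parameter family --- is not what the equations actually do, and you have explicitly deferred the verification (``the hard part will be this last structural identification'') to exactly the place where it fails. Writing $z_2^2=\bigl(J^+(v_1)+J^+(v_2)\bigr)/\bigl(J^+(u_1)+J^+(u_2)\bigr)$ and $z_1^2=\bigl(J^+(v_1)+J^+(u_{g-1})\bigr)/\bigl(J^+(u_1)+J^+(v_{g-1})\bigr)$ and using $z_1^2=z_2^2$, the outcome splits according to the sign pattern of $\bigl(\begin{smallmatrix} u_2 & u_{g-1}\\ v_2 & v_{g-1}\end{smallmatrix}\bigr)$: odd parity is impossible; for the four ``diagonal'' even patterns (where $J^+(u_2)=J^+(v_{g-1})$ and $J^+(v_2)=J^+(u_{g-1})$) the compatibility equation is automatic, so $(u_1,v_1)$ remains a \emph{two}-parameter family with $z_1,z_2$ determined by it up to sign, and the residual system is that of genus $g-2$, not $g-1$; only for the four ``anti-diagonal'' patterns does one get the one-parameter constraint $J^+(u_1)=\pm J^+(v_1)$ with $z_1^2=z_2^2=\mp 1$ and a reduction to genus $g-1$, and there the potential acquires an overall factor $\sqrt{-1}$ (rotating all critical values by $90$ degrees), which is essential because dropping the genus by one must interchange real and imaginary critical values. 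Your dimension bookkeeping and your claim about which critical value the reduced system sits over are calibrated to the genus-$(g-1)$ picture only, so they cannot be correct in the diagonal cases.

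So the gap is concrete: you need a case division on the admissible sign patterns, with two structurally different reductions (to genus $g-2$ with a free $(u_1,v_1)$-plane, and to genus $g-1$ with a $\sqrt{-1}$-twist and a one-dimensional $(u_1,v_1)$-family), rather than a single reduction to $\Gamma_{g-1,g-2}$. As written, your argument would either miscount dimensions or fail to identify the reduced system in half of the cases.
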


\begin{proof}
  By \eqref{equation:ui-vi-condition} for~$i=2$ and \eqref{equation:ug-vg-condition} we obtain that~$u_2^2=v_2^2=u_{g-1}^2=v_{g-1}^2=1$.
  This brings us in a situation reminiscent of that of \cref{lemma:ui-vi-pm-1},
  but we will only consider the sign of~$(\begin{smallmatrix} u_2 & u_{g-1} \\ v_2 & v_{g-1} \end{smallmatrix})$.

  We can rewrite \eqref{equation:zi-condition} for~$i=2$ as
  \begin{equation}
    z_2^2=\frac{J^+(v_1)+J^+(v_2)}{J^+(u_1)+J^+(u_2)}
  \end{equation}
  and \eqref{equation:z1-condition} as
  \begin{equation}
    z_1^2=\frac{J^+(v_1)+J^+(u_{g-1})}{J^+(u_1)+J^+(v_{g-1})}.
  \end{equation}
  so that
  \begin{equation}
    \label{equation:3.5}
    \frac{J^+(v_1)+J^+(v_2)}{J^+(u_1)+J^+(u_2)}
    =
    \frac{J^+(v_1)+J^+(u_{g-1})}{J^+(u_1)+J^+(v_{g-1})}.
  \end{equation}

  In case the parity is \emph{odd}, then either~$J^+(u_2)=J^+(v_{g-1})$ and~$J^+(v_2)\neq J^+(u_{g-1})$ or vice versa.
  But the equality of \eqref{equation:3.5} forces both equalities in case one equality holds, so this situation does not arise.

  In the case the parity is \emph{even} we have listed the values of~$J^+$ in \cref{table:sign-choices-2}.
  In the first four cases we observe that~$J^+(v_2)=J^+(v_{g-1})=\pm2$ and~$J^+(v_2)=J^+(u_{g-1})=\pm2$ for the appropriate choice of signs.
  This implies that
  \begin{equation}
    \begin{aligned}
      z_1^2&=\frac{J^+(u_1)\pm2}{J^+(v_1)\pm2} \\
      z_2^2&=\frac{J^+(v_1)\pm2}{J^+(u_1)\pm2}
    \end{aligned}
  \end{equation}
  hence the choice of~$u_1$ and~$v_1$ determine~$z_1$ and~$z_2$ up to a sign.
  The equations become those for genus~$g-2$,
  with the additional constraint that~$u_{g-1}^2=v_{g-1}^2=1$.
  Now projecting along~$u_1$ and~$v_1$ reduces the equations,
  and we get that the dimension of the critical locus indexed by~$k$ (and genus~$g$)
  is two less than that of the critical locus indexed by~$k-2$ and genus~$g-2$,
  hence of expected dimension.

  It remains to study the last four cases.
  Now the induction will not go down by two, but rather by one.
  Hence we need to ensure the factor~$\sqrt{-1}$ comes into play.

  We claim that for $(\begin{smallmatrix} + & + \\ - & - \end{smallmatrix})$ and $(\begin{smallmatrix} - & + \\ - & + \end{smallmatrix})$,
  resp.~$(\begin{smallmatrix} - & - \\ + & + \end{smallmatrix})$ and $(\begin{smallmatrix} + & - \\ + & - \end{smallmatrix})$ we have that
  \begin{enumerate}
    \item $J^+(u_1)=-J^+(v_1)$ and $z_1^2=z_2^2=-1$;
    \item $J^+(u_1)=J^+(v_1)$ and $z_1^2=z_2^2=1$.
  \end{enumerate}
  We explain this in the case of $(\begin{smallmatrix} + & + \\ - & - \end{smallmatrix})$,
  the other cases being analogous.
  From \cref{table:sign-choices-2} and \eqref{equation:3.5} we obtain
  \begin{equation}
    \frac{J^+(v_1)-2}{J^+(u_1)+2}=\frac{J^+(v_1)+2}{J^+(u_1)-2}.
  \end{equation}
  This can be rewritten as
  \begin{equation}
    \frac{J^+(u_1)+J^+(v_1)}{J^+(u_1)+2}=\frac{J^+(u_1)+J^+(v_1)}{J^+(u_1)-2}.
  \end{equation}
  This is only possible if~$J^+(u_1)=-J^+(v_1)$.
  But we have that~$J^+(u_1)\neq\pm2$,
  so \eqref{equation:z1-condition} implies that~$z_1^2=-1$,
  and similarly \eqref{equation:zi-condition} for~$i=2$ implies that~$z_2^2=-1$.
  We will assume~$z_1=z_2^{-1}=\sqrt{-1}$, the other cases are similar.

  By the sign choices we have~$J^+(u_2)=J^+(u_{g-1})=2$, $J^+(u_{g-1})=J^+(v_2)=-2$.
  We will now use the expression of the graph potential as a sum of bead potentials,
  together with the partial evaluation we can do using the equalities from above.
  Omitting a tedious manipulation,
  the presence of~$z_1=z_2=\sqrt{-1}$ will ensure that the resulting Laurent polynomial
  is the graph potential for the necklace graph of genus~$g-1$,
  except that it is multiplied with~$\sqrt{-1}$,
  so that all critical values are rotated by~90 degrees.
\end{proof}

\begin{table}
  \centering
  \begin{tabular}{c|cc|cc}
    & $J^+(u_2)$ & $J^+(v_{g-1})$ & $J^+(v_2)$ & $J^+(u_{g-1})$ \\
    \midrule 
    $(\begin{smallmatrix} + & + \\ + & + \end{smallmatrix})$ & $2$ & $2$ & $2$ & $2$ \\
    $(\begin{smallmatrix} - & - \\ - & - \end{smallmatrix})$ & $-2$ & $-2$ & $-2$ & $-2$ \\
    $(\begin{smallmatrix} + & - \\ - & + \end{smallmatrix})$ & $2$ & $2$ & $-2$ & $-2$ \\
    $(\begin{smallmatrix} - & + \\ + & - \end{smallmatrix})$ & $-2$ & $-2$ & $2$ & $2$ \\
    $(\begin{smallmatrix} + & + \\ - & - \end{smallmatrix})$ & $2$ & $-2$ & $-2$ & $2$ \\
    $(\begin{smallmatrix} - & - \\ + & + \end{smallmatrix})$ & $-2$ & $2$ & $2$ & $-2$ \\
    $(\begin{smallmatrix} + & - \\ + & - \end{smallmatrix})$ & $2$ & $-2$ & $2$ & $-2$ \\
    $(\begin{smallmatrix} - & + \\ - & + \end{smallmatrix})$ & $-2$ & $2$ & $-2$ & $2$ \\
  \end{tabular}
  \caption{Values of $J^+$}
  \label{table:sign-choices-2}
\end{table}

\renewcommand*{\bibfont}{\small}
\printbibliography
\end{document}